\documentclass[11pt]{article}
\usepackage{amsmath,amssymb,amscd, float, verbatim, latexsym,cite,wrapfig,enumerate,setspace,mathtools,bm}
\usepackage[top=1in, bottom=1in, left=1in, right=1in]{geometry}
\usepackage{amsthm,hyperref,amstext}
\usepackage[all]{xy}
\usepackage{lscape}
\usepackage{color}
\usepackage[normalem]{ulem}
\usepackage{cancel,float}
\usepackage{subcaption}
\usepackage{tikz-cd}
\usepackage{mathdots}
\mathtoolsset{showonlyrefs=true}

\newtheorem{theorem}{Theorem}[section]
\newtheorem{lemma}[theorem]{Lemma}
\newtheorem{proposition}[theorem]{Proposition}

\newtheorem{remark}[theorem]{Remark}
\newtheorem{definition}[theorem]{Definition}

\newcommand{\D}{0}
\newcommand{\bD}{\mathbb{D}}
\newcommand{\R}{\mathbb{R}}
\newcommand{\C}{\mathbb{C}}
\newcommand{\N}{{\mathbb{N}_{0}}}

\newcommand{\Z}{\mathbb{Z}}

\newcommand{\cA}{\mathcal{A}}

\newcommand{\cG}{\mathcal{G}}

\newcommand{\cM}{\mathcal{M}}

\newcommand{\cO}{\mathcal{O}}
\newcommand{\cQ}{\mathcal{Q}}

\newcommand{\bydef}{\stackrel{\mbox{\tiny\textnormal{\raisebox{0ex}[0ex][0ex]{def}}}}{=}}

\setcounter{tocdepth}{2} 

\title{Validated matrix multiplication transform for orthogonal polynomials with applications to computer-assisted proofs for PDEs} 

\author{%
Matthieu Cadiot\thanks{%
	Department of Mathematics and Statistics, McGill University,
	Montreal, QC, H3A 0B9, Canada (\texttt{matthieu.cadiot@mail.mcgill.ca})}
\and
Jonathan~Jaquette\thanks{%
	New Jersey Institute of Technology, University Heights, Cullimore Hall 606, Newark, New Jersey, 07102, USA
    (\texttt{jonathan.jaquette@njit.edu})}
\and
Jean-Philippe~Lessard\thanks{%
	Department of Mathematics and Statistics, McGill University,
	Montreal, QC, H3A 0B9, Canada (\texttt{jp.lessard@mcgill.ca})}
\and
Akitoshi~Takayasu\thanks{
	Institute of Systems and Information Engineering, University of Tsukuba, 1-1-1 Tennodai, Tsukuba, Ibaraki 305-8573, Japan (\texttt{takitoshi@risk.tsukuba.ac.jp})} 
}

\begin{document}

\maketitle

\begin{abstract}
In this paper, we achieve three primary objectives related to the rigorous computational analysis of nonlinear PDEs posed on complex geometries such as disks and cylinders. First, we introduce a validated Matrix Multiplication Transform (MMT) algorithm, analogous to the discrete Fourier transform, which offers a reliable framework for evaluating nonlinearities in spectral methods while effectively mitigating challenges associated with rounding errors. Second, we examine the Zernike polynomials, a spectral basis well-suited for problems on the disk, and highlight their essential properties. We further demonstrate how the MMT approach can be effectively employed to compute the product of truncated Zernike series, ensuring both accuracy and efficiency. Finally, we combine the MMT framework and Zernike series to construct computer-assisted proofs that establish the existence of solutions to two distinct nonlinear elliptic PDEs on the disk. 
\end{abstract}

\begin{center}
{\bf \small Keywords.} 
{ \small Matrix Multiplication Transform, Orthogonal Polynomials, Gaussian Quadrature, Zernike Series, Computer-Assisted Proofs, Elliptic PDEs on the Disk}
\end{center}

\section{Introduction} \label{sec:Introduction}

The study of partial differential equations (PDEs) has long been a central topic in mathematics and applied sciences. Many of these equations, especially nonlinear ones, do not admit closed-form solutions, making their analysis and numerical resolution essential for practical applications. In recent years, computer-assisted proofs (CAPs) have emerged as a powerful tool to rigorously establish the existence of solutions to PDEs. The objective of CAPs is to combine numerical methods with rigorous mathematical analysis to provide precise, verified solutions, thereby bridging the gap between computational results and analytical proofs. By analyzing the solutions in an appropriately chosen space, CAPs offer a means to rigorously verify the existence of solutions expressed as expansions within a given basis. The challenge lies not only in obtaining a good numerical approximation but also in ensuring that the solution is  valid in a rigorous sense, respecting the underlying geometry of the problem and the specific properties of the PDE.

The field of CAPs in PDEs may be seen as part of a larger global effort to construct rigorous proofs for nonlinear dynamical systems. Early pioneering works, such as those on the Feigenbaum conjectures \cite{MR648529} and the existence of chaos and global attractors in the Lorenz equations \cite{MR1276767, MR1870856, MR1701385}, as well as more recent proofs of Jones' and Wright's conjectures in delay equations \cite{MR3912700, MR3779642}, chaos in the 1D Kuramoto-Sivashinsky PDE \cite{MR4113209}, and instability proofs in Poiseuille flow \cite{MR2492179}, have laid the groundwork for this field. Additionally, the study of bifurcating solutions for 3D Rayleigh-Bénard problems \cite{MR2470145}, equilibria in the 3D Navier-Stokes (NS) equations \cite{MR4372115}, solutions of NS on unbounded strips with obstacles \cite{Wunderlich2022_1000150609}, 3D gyroid patterns in materials \cite{MR3904424}, periodic orbits in NS \cite{MR4235780}, blowup in Euler equations on the cylinder \cite{CheHou22}, and imploding solutions for 3D compressible fluids \cite{BucCaoGom22}, further underscore the growing importance of CAPs in analyzing nonlinear systems. For more details, we refer the interested reader to the book \cite{MR3971222} and the survey papers \cite{MR1420838, MR3444942, MR3990999}. It is worth mentioning that a fundamental tool in these CAPs is the use of interval arithmetic \cite{Moore}, which efficiently ensures that all floating-point errors are controlled in the computations, thus guaranteeing the reliability of the results.

A crucial aspect of solving PDEs is the choice of basis functions in which the solution is expanded. The choice of basis function depends heavily on the geometry of the domain, the boundary conditions imposed, and the differential operators involved. For simple geometries, such as periodic domains or intervals, standard bases such as Fourier series or Chebyshev polynomials are often sufficient. For instance, on a periodic domain, Fourier basis functions are optimal, due to their simple form under differentiation and their good approximation properties. Similarly, on a one-dimensional compact interval, Chebyshev polynomials (essentially Fourier cosine series, as discussed in \cite{boyd2001chebyshev}) are widely used, particularly when boundary conditions are not periodic. However, when dealing with more complex domains --- such as disks, spheres, or other non-Euclidean geometries --- the selection of an appropriate basis becomes less straightforward. The Fourier basis, though highly effective for periodic problems, is not universally applicable in these cases. In such settings, the geometry of the problem suggests the use of specialized basis functions, which may include non-polynomial options like Bessel functions or Hermite functions, or polynomial options such as Legendre polynomials or generalized Jacobi polynomials. Each of these bases has its own advantages and drawbacks, making the choice of basis a key consideration in the numerical resolution of the PDE.

In the field of CAPs for PDEs, two distinct approaches have traditionally been employed (we refer the interested reader to the recent survey paper \cite{MR3990999} for a comprehensive discussion of the topic). First, finite element methods have been widely used to analyze PDEs defined on complex geometries \cite{MR1231898,MR2161437,MR3061473,Takayasu2013,MR3971222,Wunderlich2022_1000150609}. This approach is highly versatile, as it allows for the consideration of more general domains. However, it comes with the tradeoff of lacking strong approximation properties, which often limits the scope of CAP results to relatively simple dynamical objects, such as steady states and their stability. Second, there exists a long tradition of studying PDEs on toroidal or rectangular domains using Fourier series \cite{MR1838755,MR2718657,MR2728184,MR3904424,MR4254059}. The main advantage of this approach lies in the excellent approximation properties of spectral methods, including spectral convergence. This enables the analysis of more complex dynamical systems, such as periodic orbits \cite{MR3662023,MR4235780}, connecting orbits \cite{MR3773757} and chaos \cite{MR4113209}. However, the major limitation is that the Fourier approach is typically limited  to  restricted geometries, where the domain is rectangular or periodic. 

Classical topics in PDEs explore complex dynamical behaviors across a broad range of geometries, such as spiral waves in excitable media \cite{sandstede1999bifurcations} and the axisymmetric Navier-Stokes equations \cite{lopez1994bifurcation, hou2021nearly}. However, a gap remains in the literature on CAPs with respect to the development of spectral methods for such geometries. One key challenge in cylindrical and spherical domains is the apparent singularity introduced by polar coordinates. Previous work in the CAPs literature has addressed this difficulty by utilizing Taylor series expansions \cite{van2022rotation, van2018computer}. Recent efforts have sought to bridge this gap by establishing a framework for developing CAPs based on global spectral bases \cite{arioli2019non, maxime_hugo, arioli2024periodic}. In \cite{arioli2019non}, which focused on solutions on a disk, the authors noted that expanding the radial variable as a Chebyshev series proved effective as a pseudospectral method, but it was only by employing Zernike series that they successfully produced a CAP. In their subsequent work \cite{arioli2024periodic}, they used spherical harmonics as a basis to construct CAPs for periodic and quasi-periodic solutions on the sphere. Their results demonstrate that by combining spectral methods with rigorous error bounds, it is possible to establish the existence of solutions to elliptic and parabolic PDEs in geometries that were previously not considered in earlier studies.

Given the inherently nonlinear nature of many PDEs encountered in real-world applications, it is crucial to handle the nonlinear terms with care within the framework of CAPs. To address this challenge, the present work establishes a framework for rigorously evaluating nonlinearities within a chosen basis through the introduction of a validated {\em Matrix Multiplication Transform} (MMT) approach \cite{boyd2001chebyshev}. Similar to the discrete Fourier transform (DFT), the MMT approach facilitates the transformation of a function’s representation between its coefficients in a given basis (where derivative evaluations are straightforward) and its values at carefully selected grid points (where nonlinearities are more easily evaluated). The MMT provides a precise and computationally efficient method for evaluating nonlinear terms, which is typically a computationally intensive task requiring sophisticated error analysis. In the context of PDEs posed on higher-dimensional domains, where solutions are represented using a tensor product of bases, the coefficients-to-grid transformation can be performed through a sequence of one-dimensional transforms, leading to significant computational savings. Therefore, in this paper, we focus on developing the MMT approach specifically in the context of one-dimensional polynomials. 

To describe the MMT approach in  detail,  fix a one-dimensional polynomial basis $ \{ p_n \}_{n=0}^\infty$  on the interval $[a,b]$ such that the degree of $p_n$ is $n$, and consider a   polynomial $f:[a,b] \to \R$  
\begin{equation} \label{eq:Interpolation}
f(x) = \sum_{n=0}^N a_n p_n(x),
\end{equation}
given as a linear combination of basis polynomial functions $p_n(x)$. We aim to rigorously evaluate a nonlinear transformation $\mathcal{G}(f(x))$, where $\mathcal{G}$ is a polynomial nonlinear function, such as $\mathcal{G}(f(x)) = f(x)^2$. 
That is, we wish to derive an exact formula for  the expansion:
\begin{align}\label{eq:G(f)}
\mathcal{G}(f(x)) = \sum_{n=0}^{N'} c_n p_n(x),
\end{align}
%
where $N'$ is sufficiently  large (e.g.\ if $f$ is degree $N$ and $\cG $ is degree $d$, then $ N' = d N$). 

The  MMT approach provides a structured and efficient way to handle such nonlinear evaluations rigorously. 
Summarized in Figure \ref{fig:CommDiagram}, our approach involves: (i)  evaluating $f$ on a carefully selected grid; (ii) applying the nonlinearity to obtain the values of  $\mathcal{G}(f)$ on the grid; and (iii)   applying  an inverse transform to yield the coefficients of $\mathcal{G}(f)$ expanded in the basis $\{p_n\}_{n=0}^{N'}$. 
We denote the \emph{Matrix Multiplication Transform} (MMT) as the mapping from  coefficient-space to grid-space, and the  \emph{inverse Matrix Multiplication Transform} (iMMT) as the mapping  from  grid-space  to  coefficient-space.

\begin{definition}[\bf Matrix Multiplication Transform] \label{def:MMT}

Let $\bm{a}=(a_0,\dots,a_N)^T$ be the vector of coefficients of the finite sum of polynomials in \eqref{eq:Interpolation}, and let $\bm{f}\bydef (f(x_0), \dots, f(x_N))^T$ be the vector of grid values of \eqref{eq:Interpolation} at  distinct nodes $\{x_j\}_{j=0}^N$. 
\begin{itemize}
    \item Define $M:\bm{a}\mapsto \bm{f}$, which maps the coefficients $\bm{a}$ to the grid values $\bm{f}$, to be the \emph{MMT matrix}. 
    \item  Define $M^{-1}:\bm{f}\mapsto \bm{a}$, which maps the grid values to coefficients, to be  the \emph{iMMT matrix}. 
\end{itemize}

\end{definition}

As the polynomials $\{p_j\}_{j=0}^N$ are linearly independent and  nodes $\{x_j\}_{j=0}^N$ are distinct, it follows that  $M$ is an invertible, linear map, and may  be represented as an $(N+1)\times(N+1)$ matrix. 
Moreover, each entry of the MMT matrix $M$ is universally defined as
\begin{align}\label{eq:MMT_matrix}
    (M)_{j,n} \bydef p_n(x_j),
\end{align}
for $0\le j,n\le N$, where $j$ represents the row index and $n$ represents the column index of $M$. In this manner, we are able to obtain the grid values via the matrix-vector product $\bm{f}= M \bm{a}$. 

An essential consideration for CAPs is the effect of aliasing. The MMT is only an invertible transformation between the space of polynomials of degree at most $N$, and their values on nodes $\{x_j\}_{j=0}^N$. However  if $f$ is degree $N$ and $\cG $ is degree $d$, then $\cG(f)$  will be a degree $ N' = d N$ polynomial. To dealias our results, we need to  evaluate the nonlinearity $\mathcal{G}$ at the polynomial $f$ on  a set of grid points $\{x_j\}_{j=0}^{N'}$. Hence, a preliminary step in the MMT approach is to pad the coefficients  $\{a_n\}_{n=0}^N$ with zeros, mapping it to  $\{a_n\}_{n=0}^{N'} \bydef \iota^{N'}(\{a_n\}_{n=0}^N)$, where the inclusion $\iota^{N'}: \R^{N+1} \to \R^{N+1} \times \R^{N'-N} $ is  the  zero section. Subsequently, the $(N'+1)\times (N'+1)$ MMT matrix  enables us to transform the coefficients $\{a_n\}_{n=0}^{N'}$ to grid values $\{f(x_j)\}_{n=0}^{N'}$. The nonlinearity is then evaluated at the grid points (i.e., $f(x_j) \mapsto \mathcal{G}(f(x_j))$), and finally, the values $\mathcal{G}(f(x_j))$ are transformed back via the  iMMT to obtain the coefficients $c_n$ (see  Figure \ref{fig:CommDiagram}).

\begin{figure}[t]
\[
\begin{tikzcd}
\text{Coefficients:}\arrow[d] & \{a_n\}_{n=0}^N\arrow[hookrightarrow,r,"\iota^{N'}"] & \{a_n\}_{n=0}^{N'} \arrow[r] \arrow[d, "\text{MMT}"] & \{c_n\}_{n=0}^{N'} \\
\text{Grids:}\arrow[u]& &\{f(x_j)\}_{j=0}^{N'} \arrow[r]           & \left\{\cG(f(x_j))\right\}_{j=0}^{N'}\arrow[u, "\text{iMMT}"] 
\end{tikzcd}
\]
\caption{Diagram of evaluating the polynomial nonlinearity $\mathcal{G}$ via the MMT approach.}
\label{fig:CommDiagram}
\end{figure}
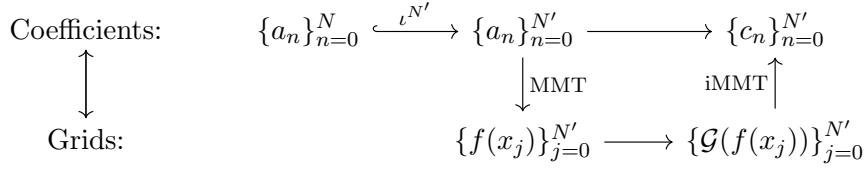

The selection of grid points $\{x_j\}_{j=0}^{N'}$ and polynomials $\{p_n(x)\}_{n=0}^{N'}$ is crucial for ensuring well-conditioned MMT matrices. For instance, if we choose monomials $p_n (x) = x^n$ and an equally spaced grid, the resulting matrix $M$ becomes a Vandermonde matrix, which is typically ill-conditioned, especially for large $N$. Therefore, making careful choices for these elements is essential. Classical Gaussian quadrature theory suggests that better conditioning can be achieved by selecting orthogonal polynomials $\{p_n(x)\}$ and placing the nodes $\{x_j\}_{j=0}^{N'}$ at the zeros of the corresponding polynomial of degree 
$N'+1$. However, even in this case, implementing the algorithms with validated numerics remains ill-conditioned and presents significant challenges for obtaining practical results \cite{storck1993verified}.

In Section~\ref{sec:orthogonal_polynomials_and_MMT}, we propose a general strategy for overcoming these difficulties. We provide explicit formulas for the entries of the iMMT matrix and introduce computable methods for efficiently calculating these entries. In particular, we provide in Section \ref{sec:validated_numerics_comparisons} a computer-assisted approach based on validated numerics to make such computations rigorous. We present a quantitative comparison of the different computable methods for computing the MMT and their relative propagation of error when using interval arithmetic. Such an analysis is essential in view of treating nonlinear PDE problems.

\begin{remark}[\bf Clebsch-Gordon vs MMT]\label{rem:CGvsMMT}
We note that when the polynomials $p_n$ in \eqref{eq:Interpolation} are orthogonal
, a conventional approach for evaluating $\mathcal{G}(f(x))$ employs Clebsch-Gordon (or linearization) coefficients \cite{biedenharn1984angular,arioli2019non}. These coefficients are defined through the relation $p_i(x) p_j(x) = \sum_{|i-j| \leq k \leq |i+j|} c_{i,j}^k p_k(x)$. However, this method generically incurs a computational complexity of $\cO(N^3)$ for each product evaluation. In contrast, our MMT approach reduces this complexity to $\cO(N^2)$, offering significant computational savings. While still less efficient than the Fast Fourier Transform (FFT), which operates with a complexity of $\cO(N \log N)$, the MMT approach remains well-suited for medium-sized problems \cite{boyd2011comparing,vasil2016tensor}. It is also worth emphasizing that in our complexity comparison
we do not account for the initial cost of precomputing the Clebsch-Gordon coefficients or the matrix coefficients required for the MMT method.
\end{remark}

The primary motivation for the validated MMT approach is to solve nonlinear PDEs using computer-assisted proofs. This work is particularly motivated by the study of the three-dimensional axisymmetric Navier--Stokes equations \cite{lopez1994bifurcation, lopez1998efficient, hou2008dynamic, hou2021nearly}. The axisymmetric assumption reduces the problem to a non-local two-dimensional PDE, dependent on the radial variable $r$ and the height variable $z$. However, applying current CAP techniques to these complex PDEs presents three main challenges. First, the PDEs involve a cylindrical Laplacian, requiring a specialized spectral basis for expansion in the radial variable \cite{arioli2019non}. Second, the equation contains a singular $1/r$ term within the nonlinearities. Lastly, the nonlinearities involve derivatives, further complicating the analysis. These challenges provide a strong motivation for employing the validated MMT approach, which offers a means to overcome these obstacles and provide rigorous solutions.

Our approach to tackling these difficulties involves using orthogonal polynomials defined on the disk, which necessitates the validated MMT approach derived in Section~\ref{sec:orthogonal_polynomials_and_MMT}. In Section~\ref{sec:Zernike}, we focus on the special case of Zernike polynomials, which are orthogonal polynomials on the unit disk. In particular, we review their fundamental properties and formulae for linear operators expressed in this basis, which solves the difficulties above in principle. Additionally, we detail how to compute products in the Zernike basis using the MMT, with the computations made rigorous via interval arithmetic, as detailed in Section~\ref{sec:validated_numerics_comparisons}.

To demonstrate the applicability of our approach,  we analyze in Section~\ref{sec:PDE} some toy problems on the unit disk $\bD \bydef \{z \in \mathbb{C}, ~ |z| \leq 1\}$  having quadratic nonlinearities.
In particular, given some $m \in \mathbb{N}_0=\{0,1,2,\dots,\}$, we study the following complex valued PDE:
\begin{align}\label{eq : zero finding original}
    \begin{cases}
        \triangle v(z) + \bar{z}^m v(z)^2 =0,~~ &\text{ for all } z \in \bD \\
        v(e^{i\theta}) =0,~~ &\text{ for all } \theta\in (0,2\pi).
    \end{cases}
\end{align}
Additionally we consider the following boundary value problem possessing the singular term $z^{-1}$.  
\begin{align}\label{eq : zero finding original 1/z}
    \begin{cases}
        \triangle v(z) + {z}^{-1} v(z)^2 =0,~~ &\text{ for all } z \in \bD\\
        v(e^{i\theta}) =0,~~ &\text{ for all } \theta\in (0,2\pi).
    \end{cases}
\end{align}
The singular inhomogeneous term $z^{-1}$ is meant to be a toy model version of similar terms which appear in the axisymmetric Navier--Stokes equations \cite{hou2008dynamic,majda2002vorticity}. 
In Theorem~\ref{thm:disk_solutions}, we prove the existence of solution to \eqref{eq : zero finding original} (for $m=0,1,2,20$) and \eqref{eq : zero finding original 1/z} (e.g. see Figure~\ref{fig:m_minus_one}).
\begin{figure}[H]\centering
	\begin{minipage}[b]{0.5\linewidth}
	\centering
        \includegraphics[width=\textwidth]{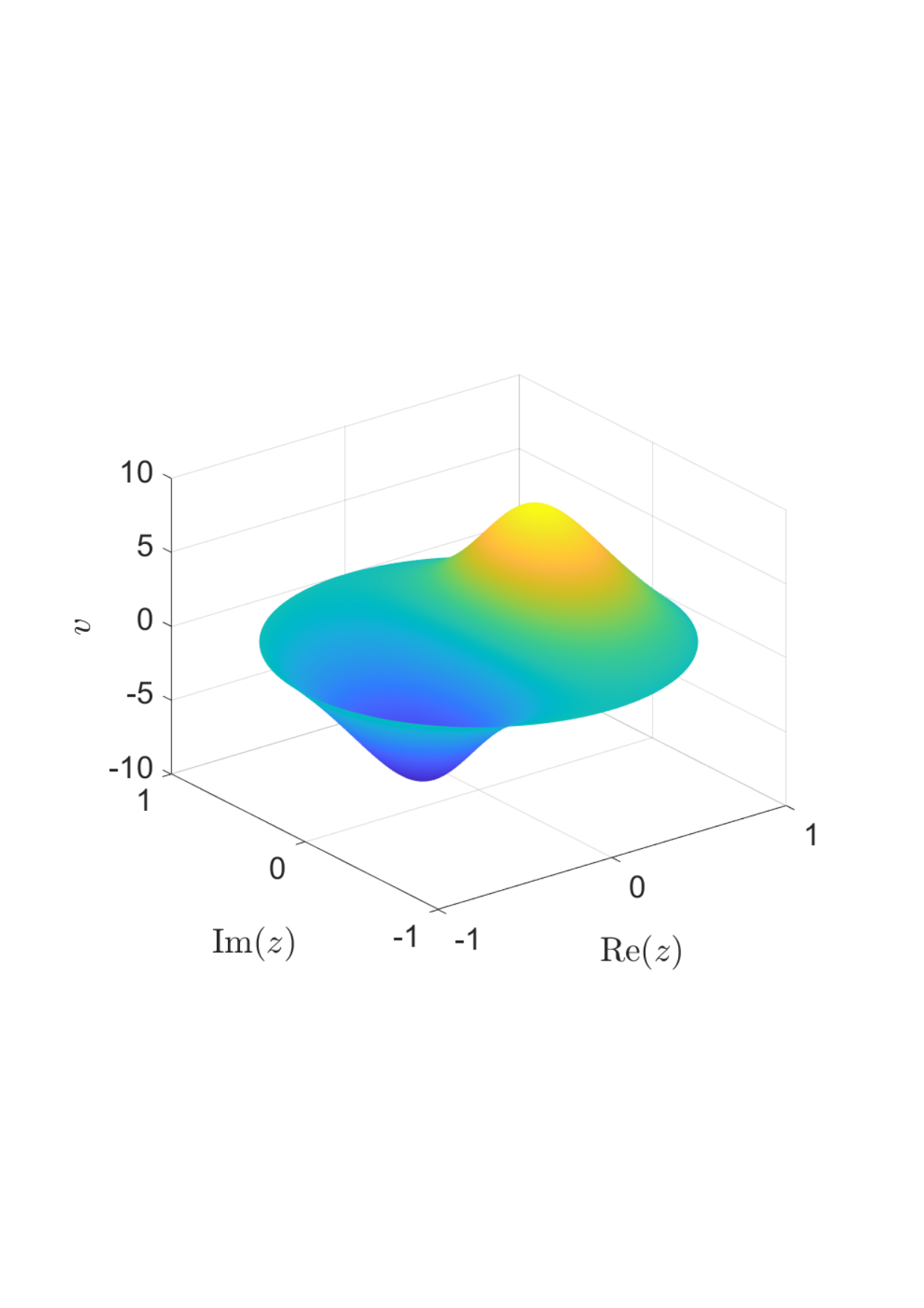}
	\end{minipage}
	\begin{minipage}[b]{0.47\linewidth}
	\centering
	\includegraphics[width=\textwidth]{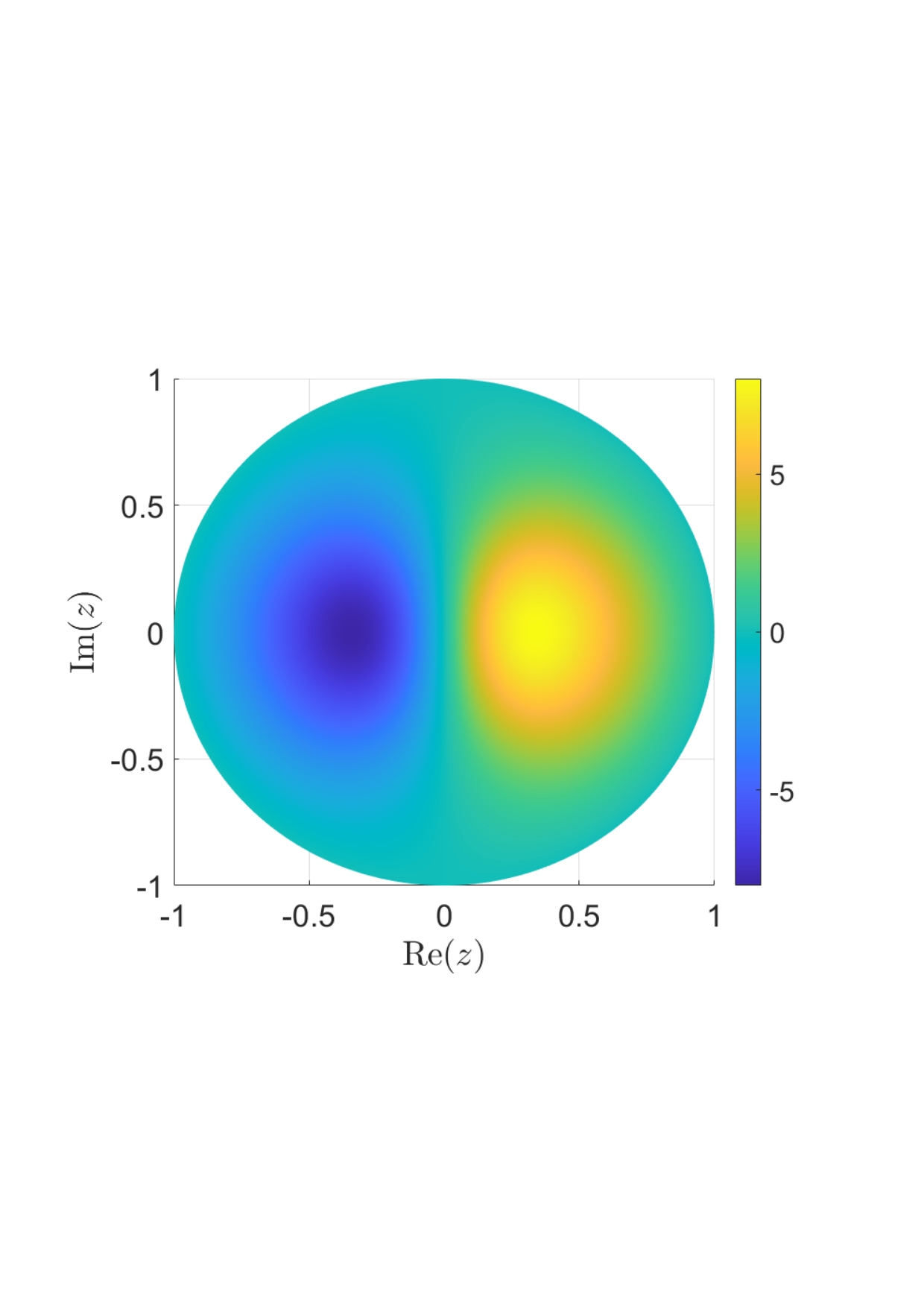}
	\end{minipage}
	\caption{Real part of a numerically computed approximate solution to \eqref{eq : zero finding original 1/z}.}\label{fig:m_minus_one}
\end{figure}

In future work we plan to extend this approach to the axisymmetric Navier--Stokes equations. This extension will require addressing the challenges posed by derivatives in the nonlinear terms arising from cylindrical geometry. By leveraging the validated MMT framework, we aim to rigorously analyze these equations and provide a robust foundation for solving complex fluid dynamics problems with axisymmetric assumptions.

\section{Orthogonal polynomials and the MMT approach}
\label{sec:orthogonal_polynomials_and_MMT}

As discussed earlier in Section~\ref{sec:Introduction}, the choice of grid points, $\{x_j\}_{j=0}^N$, and polynomials, $\{p_n(x)\}_{n=0}^N$, is crucial for preventing the formation of ill-conditioned MMT matrices. Therefore, making informed selections for these components is essential. A common method involves choosing polynomials $p_0, \dots, p_N$ that are orthogonal with respect to an appropriately defined inner product, with the grid points $x_0, \dots, x_N$ corresponding to the zeros of the polynomial $p_{N+1}$. In the present section, we outline a general framework for this approach and provide explicit formulas for the entries of the iMMT matrix, along with practical methods for computing these entries. To establish these concepts, we review key elements of orthogonal polynomial theory, Gaussian quadrature and numerical analysis.

\subsection{Background on orthogonal polynomials and Gaussian quadrature}

Let $\{p_n\}_{n=0}^\infty$ be a sequence of real-valued polynomial such that the degree of $p_n(x)$ is exactly $n$, that is $p_n(x) = k_nx^n + \cO(x^{n-1})$, where $k_n\neq 0$ is called the leading coefficient of $p_n$. Given an open interval $(a,b)$ ($-\infty\le a<b\le \infty$), let $\omega(x)$ be a generic weight function such that
\begin{equation}
    \omega(x)>0,~\text{for all } x\in (a,b)\quad\mbox{and}\quad\omega\in L^1(a,b).
\end{equation}
We say that two functions $f$ and $g$ are orthogonal with respect to $\omega$  if their inner product $(f,g)_\omega$ satisfies
\begin{equation}
    (f,g)_\omega\bydef\int_a^b f(x)g(x)\omega(x) dx=0.
\end{equation}
Moreover, a sequence of polynomials $\{p_n\}_{n=0}^\infty$ is said to be orthogonal with respect to $\omega$ if
\begin{equation} \label{eq:def_orthogonal_polynomials}
(p_n,p_m)_\omega = \int_a^b p_n(x)p_m(x)\omega(x)dx = \begin{cases}
0, & m\neq n,\\ W_n, & m=n,
\end{cases}
\end{equation}
where 
\begin{equation} \label{eq:W_n_scalar}
W_n \bydef \|p_n\|_\omega^2 = \int_a^b p_n(x)^2\omega(x)dx
\end{equation}
is a nonzero constant. If $W_n=1$ for all $n$, then the polynomials are said to be \emph{orthonormal}. Otherwise, these are called \emph{orthogonal polynomials} (with the weight function $\omega$).

A standard result (e.g. see Corollary 3.1 in \cite{shen2011spectral}) states that for a given weight function $\omega(x)$, there exists a unique sequence of orthogonal polynomials $\{p_n\}_{n=0}^\infty$ with leading coefficients $k_n$, given by the three-term recurrence relation
\begin{equation}\label{eq:recurrence_formula}
p_{n+1} = (\alpha_{n}x-\beta_{n}) p_n - \gamma_{n} p_{n-1},\quad n\ge 0
\end{equation}
with $p_{-1}=0$, $p_0=k_0$, and with
\begin{equation}\label{eq:abc}
\alpha_n \bydef \frac{k_{n+1}}{k_n},\qquad \beta_n \bydef  \frac{k_{n+1}}{k_n} \frac{\left(x p_n, p_n\right)_\omega}{\|p_n\|_\omega^2} \qquad \text{and} \qquad \gamma_n \bydef  \frac{k_{n-1} k_{n+1}}{k_n^2} \frac{\|p_n\|_\omega^2}{\|p_{n-1}\|_\omega^2}.
\end{equation}

Jacobi polynomials form a significant and extensive class of orthogonal polynomials, playing a central role in the applications we propose in this paper. We now proceed to introduce them.

\begin{definition}[\bf Jacobi polynomials] \label{def:jacobi_polynomials}
Given numbers $k,m>-1$ and the weight $\omega(x) \bydef (1-x)^k (1+x)^m$ (sometimes we simply refer to the weight as the pair $(k,m)$), the Jacobi polynomials $P_n^{k,m}(x)$ defined in $(-1,1)$ are orthogonal with respect to $\omega$, and they satisfy the recurrence formula $P_{n+1}^{k,m} (x) = (\alpha_n x -\beta_n ) P_n^{k,m}(x) - \gamma_n P_{n-1}^{k,m} (x)$, for $n\ge 0$, with $P^{k,m}_{-1}(x) = 0$ and $P^{k,m}_0(x) = 1$, where we have 
\tiny{
\[
\alpha_n = \frac{(2n+k+m+1)(2 n +k+m+2)}{2(n+1)(n+k+m+1)}, \quad 
\beta_n = \frac{(m^2 -k^2)(2n+k +m+1)}{2(n+1)(n+k+m+1)(2n+k+m)}, \quad
\gamma_n = \frac{(n+k)(n+m)(2n+k+m+2)}{(n+1)(n+k+m+1)(2n+k+m)}.
\]
}
\end{definition}

When the polynomials $p_n$ are orthogonal as in \eqref{eq:def_orthogonal_polynomials}, the coefficients $a_n$ in \eqref{eq:Interpolation} can be obtained by taking the inner product $(\cdot,p_n)$ on both sides of \eqref{eq:Interpolation}. This yields the expression 
\[
a_n = \frac{(f,p_n)_\omega}{W_n}, \quad n=0,\dots,N,
\]
which can be challenging to evaluate analytically for functions $f$ with intricate forms. However, if $f$ is a polynomial, Gaussian quadrature provides an efficient method for computing the inner product explicitly. To formalize this approach, let $ \{x_j\}_{j=0}^{N}$ be the zeros of the orthogonal polynomial $p_{N+1}$ with respect to the weight function $\omega$ and let $\{ \omega_j \}_{j=0}^N$ be the weights defined by 
\begin{equation} \label{eq:omega_j}
\omega_j\bydef \int_a^bh_j(x)\omega(x)dx,
\end{equation}
where the $h_j(x)$ are the standard Lagrange basis polynomials. Then (e.g. see Theorem\,3.5 in \cite{shen2011spectral} or Theorem\,14 of Sec.\,4.3 in \cite{boyd2001chebyshev}), the quadrature formula
\begin{equation} \label{eq:GaussianQuadrature}
\int_{a}^b f(x) \omega(x) dx = \sum_{0 \leq j \leq N} f(x_j) \omega_j
\end{equation}
holds for any $f$ being a polynomial of degree at most $2N+1$.

The following result (e.g. see Theorem\,18 of Sec.\,4.4 in \cite{boyd2001chebyshev}), which directly follows from \eqref{eq:GaussianQuadrature}, will enable us to derive an explicit formula for the entries of the iMMT matrix when $f$ is a polynomial.

\begin{theorem}[\bf iMMT matrix via Gaussian quadrature] \label{thm:Interpolation}
Let $\{p_n(x)\}_{n=0}^\infty$ be a sequence of orthogonal polynomials with respect to a weight function $w(x)$ on the interval $[a, b]$ and let $\{x_j\}_{j=0}^N$ be the $N+1$ distinct roots of the orthogonal polynomial $p_{N+1}(x)$. Then, any polynomial $f(x)$ of degree at most $N$ can be uniquely represented in the orthogonal polynomial basis as in \eqref{eq:Interpolation}, where the coefficients $a_n$ are given by
\begin{equation} \label{eq:an_coeffs}
a_n = \frac{(f,p_n)_\omega}{W_n} = \frac{1}{W_n} \int_a^b f(x) p_n(x) \omega(x) dx 
= \sum_{j=0}^N \frac{\omega_j p_n(x_j)}{W_n} f(x_j) ,\quad n=0,\dots,N.
\end{equation}
Hence, each entry of the iMMT matrix $M^{-1}$ is defined as
\begin{equation} \label{eq:iMMT_matrix}
(M^{-1})_{n,j} \bydef \frac{ \omega_j	p_{n}(x_j) }{W_n}, 
\end{equation}
where $n$ represents the row index and $j$ represents the column index of $B$.
\end{theorem}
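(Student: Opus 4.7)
The plan is to derive the claimed formulas by combining three standard facts: (i) that $\{p_0,\dots,p_N\}$ forms a basis of the space $\mathcal{P}_N$ of polynomials of degree at most $N$, (ii) orthogonality of the family with respect to $\omega$, and (iii) the exactness of Gaussian quadrature \eqref{eq:GaussianQuadrature} for polynomials of degree up to $2N+1$.

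First I would establish the existence and uniqueness of the representation \eqref{eq:Interpolation} for a polynomial $f$ of degree at most $N$. Since the degree of $p_n$ is exactly $n$, the set $\{p_0,\dots,p_N\}$ is linearly independent; counting dimensions, it spans $\mathcal{P}_N$. Hence $f = \sum_{n=0}^N a_n p_n$ with uniquely determined coefficients $a_n$.

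Next I would recover the integral formula for $a_n$. Taking the inner product $(\cdot,p_m)_\omega$ on both sides of \eqref{eq:Interpolation}, the orthogonality relation \eqref{eq:def_orthogonal_polynomials} collapses the sum and yields $(f,p_m)_\omega = a_m W_m$, so that $a_n = (f,p_n)_\omega / W_n$ for each $n=0,\dots,N$, which is the first equality in \eqref{eq:an_coeffs}.

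Finally I would apply Gaussian quadrature to evaluate this integral exactly. The integrand $f(x) p_n(x)$ has degree at most $N + n \leq 2N < 2N+1$, so the quadrature formula \eqref{eq:GaussianQuadrature}, applied at the roots $\{x_j\}_{j=0}^N$ of $p_{N+1}$ with weights $\{\omega_j\}_{j=0}^N$ from \eqref{eq:omega_j}, is exact for $f\cdot p_n$. This produces
\[
(f,p_n)_\omega = \sum_{j=0}^N f(x_j)\, p_n(x_j)\, \omega_j,
\]
and dividing by $W_n$ yields the second equality in \eqref{eq:an_coeffs}. Reading off the coefficient of $f(x_j)$ in the resulting linear map $\bm{f}\mapsto \bm{a}$ immediately gives \eqref{eq:iMMT_matrix}. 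There is no real obstacle here beyond making sure that the degree bound $\deg(f\cdot p_n) \leq 2N+1$ is met so that Gaussian quadrature applies exactly; once that is observed, the rest is bookkeeping.
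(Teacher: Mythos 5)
Your proposal is correct and follows exactly the route the paper intends: the paper does not spell out a proof but states that the result ``directly follows from \eqref{eq:GaussianQuadrature}'' and cites Boyd's textbook, and your argument---basis representation, orthogonality to isolate each $a_n$, then exactness of the Gaussian quadrature since $\deg(f\,p_n)\leq 2N < 2N+1$---is precisely that derivation carried out in full.
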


In summary, to compute the entries of the iMMT matrix given in \eqref{eq:iMMT_matrix} and the MMT matrix defined in \eqref{eq:MMT_matrix}, it is essential to determine several key components, specifically:
\begin{equation} \label{eq:quantities_to_compute}
\begin{aligned}
& \bullet \{x_j\}_{j=0}^{N}: \text{the nodes (roots of } p_{N+1}) \\
& \bullet \{\omega_j \}_{j=0}^N: \text{the weights in the Gaussian quadrature in } \eqref{eq:omega_j} \\
& \bullet p_n(x_j): \text{the values of the orthogonal polynomials at the nodes via } \eqref{eq:recurrence_formula} \\
& \bullet W_n: \text{the scaling factors in } \eqref{eq:W_n_scalar}.
\end{aligned}
\end{equation}


We explain in the next section details how to perform each these computations rigorously and provide some explicit formulas in the context of Jacobi polynomials (e.g. see Definition~\ref{def:jacobi_polynomials}). 

\subsection{Computing the entries of the MMT and iMMT matrices}

To outline a framework for rigorously computing the entries of the MMT matrix $M$ defined in \eqref{eq:MMT_matrix} and the iMMT matrix $M^{-1}$ defined in \eqref{eq:iMMT_matrix}, it is essential to rigorously compute all the quantities specified in \eqref{eq:quantities_to_compute}. We proceed as follows: in Section~\ref{sec:compute_x_j}, we explain how to obtain the nodes $x_j$ by calculating the eigenvalues of a specific symmetric tridiagonal matrix $\cA_{N+1}$, as defined in \eqref{eq:matrix_A_Nplus1}. In Section~\ref{sec:compute_omega_j}, we show how to determine the weights $\omega_j$ using the eigenvectors of $\cA_{N+1}$. Section~\ref{sec:computing_p_n(x_j)}, introduces two methods for evaluating the orthogonal polynomials at the grid points $p_n(x_j)$: one utilizing the recurrence relation \eqref{eq:recurrence_formula}, and the other involving the solution of a linear system. In Section~\ref{sec:W_n}, we provide a brief discussion on deriving the formula for the scaling factor $W_n$ and present the explicit expression within the framework of Jacobi polynomials. It is important to note that these computations are inherently susceptible to rounding errors. Given that our goal is to produce computer-assisted proofs of the existence of solutions to PDEs, we must establish rigorous and efficient methods for controlling all errors in evaluating the quantities listed in \eqref{eq:quantities_to_compute}, which are crucial for accurately constructing the matrices $M$ and $M^{-1}$. This subject is thoroughly discussed in Section~\ref{sec:validated_numerics_comparisons}.


\subsubsection{Computing the nodes \boldmath$x_j$\unboldmath} \label{sec:compute_x_j}

We start by describing how to obtain the nodes $\{x_j\}_{j=0}^{N}$, which, it is important to recall, are the zeros of the orthogonal polynomial $p_{N+1}$. These nodes are crucial because they are also used to generate computational grids for spectral methods. While methods like Newton's algorithm could be employed to find the zeros of $p_{N+1}$, we present the more efficient \emph{Golub-Welsch algorithm} \cite{Golub-Welsch}, which is specifically designed to compute the zeros of orthogonal polynomials accurately.

Recalling the coefficients $\alpha_n$, $\beta_n$ and $\gamma_n$ be defined as in \eqref{eq:abc}, it is known (e.g. see Theorem\,3.4 in \cite{shen2011spectral}) that the zeros $\{x_j\}_{j=0}^N$ of the orthogonal polynomial $p_{N+1}$ are the eigenvalues of the following symmetric tridiagonal matrix
\begin{equation} \label{eq:matrix_A_Nplus1}
\cA_{N+1} = 
\begin{pmatrix}
\mu_0 & \eta_1 &&& \\
\eta_1 &\mu_1  &\eta_2 &&\\	
&\ddots &\ddots & \ddots & \\
&&\eta_{N-1} & \mu_{N-1}&\eta_N \\			
&&&\eta_N &\mu_N
\end{pmatrix}
\end{equation}
where $\mu_j = \frac{\beta_j}{\alpha_j}$ for $j \geq 0$ and $\eta_j= \frac{1}{\alpha_{j-1} } \sqrt{
\frac{\alpha_{j-1} \gamma_j}{\alpha_{j}}}$ for $j \geq 1$. The computation of an eigenvalue-eigenvector pair $(\lambda,u)$ of $\cA_{N+1}$ such that $\|u\|^2=1$ can be reformulated as looking for a solution of the 
\begin{equation} \label{eq:eigenpair_F=0}
F^{\rm eig}(\lambda,u) \bydef \begin{pmatrix}
(u,u)_2-1\\
\cA_{N+1} u - \lambda u
\end{pmatrix}.
\end{equation}
In Section~\ref{sec:validated_numerics_comparisons}, we present a Newton-Kantorovich theorem (see Theorem~\ref{thm::RadiiNonLin}) that, when combined with interval arithmetic, can be used to rigorously determine the locations of all eigenvalue-eigenvector pairs of $\cA_{N+1}$ by solving problem $\eqref{eq:eigenpair_F=0}$ $N+1$ times.

\subsubsection{Computing the weights \boldmath$\omega_j$\unboldmath} \label{sec:compute_omega_j}

With a strategy in place for computing the nodes $x_j$, the next step is to calculate the weights $\omega_j$ defined in \eqref{eq:GaussianQuadrature}. Let $Q(x_j) =( Q_0( x_j), \dots, Q_N(x_j))^T \in \R^{N+1}$ be an eigenvector of $\cA_{N+1}$ associated with the eigenvalue $x_j$ such that $\|Q(x_j)\|^2 = \left( Q(x_j),Q(x_j) \right)_2 = 1$. According to Theorem 3.6 in \cite{shen2011spectral}, the weights $\{ \omega_j\}_{j=0}^{N}$ can be obtained from the first component of the eigenvector $Q(x_j)$ using the formula
\begin{equation} \label{eq:omega_j_general}
\omega_j = \left[ Q_0(x_j)\right]^2 \int_a^b \omega(x) dx.
\end{equation}

\begin{remark}[\bf Computing the weights for Jacobi polynomials]\label{rem:omega_j_jacobi}
In case we are working with Jacobi polynomials, which are orthogonal with respect to the weight $\omega(x)=(1-x)^k(1+x)^m$ with $k,m \in \mathbb{N}_0$, Equation (3.144) from \cite{shen2011spectral} yields the explicit expression 
\begin{equation} \label{eq:omega_j_Jacobi}
\int_a^b \omega(x) dx=\int_{-1}^1 (1-x)^k(1+x)^m dx = \frac{2^{k+m+1} k! m!}{(k+m+1)!}.
\end{equation}
\end{remark}
For Jacobi polynomials, the only remaining task to compute $\omega_j$ in \eqref{eq:omega_j_general} is to compute the value $Q_0(x_j)$, which is obtained by computing the eigenvectors of the matrix $\cA_{N+1}$ defined in \eqref{eq:matrix_A_Nplus1}.


\subsubsection{Evaluating \boldmath$p_n(x_j)$\unboldmath: the orthogonal polynomials at the grid points} \label{sec:computing_p_n(x_j)} 

In this section, we present three distinct methods for evaluating $p_n(x_j)$ for $j, n = 0, \dots, N$, which correspond to the values of orthogonal polynomials at the grid points. These methods include: the Forsythe algorithm, which utilizes the recurrence relation; a linear system approach, solved using a Newton-Kantorovich method with an approximate inverse; and a technique based on the eigenvectors of the matrix $\cA_{N+1}$. \\

\noindent {\bf Method 1 (Forsythe algorithm).} 
Recalling the recurrence formula given in \eqref{eq:recurrence_formula}, a first approach to computing the values $p_n(x_j)$ is to use the previously obtained nodes $x_j$, by setting $p_{-1}=0$ and $p_0=k_0$ and then directly evaluating
\[
p_{n+1}(x_j) = (\alpha_{n}x_j-\beta_{n}) p_n(x_j) - \gamma_{n} p_{n-1}(x_j),\quad n\ge 0.
\]
This method, called the {\em Forsythe algorithm} \cite{MR0092208}, is commonly used in practice. However, it is well known that numerical stability can be a concern when computing recurrence relations, particularly when using interval arithmetic, leading to potential accuracy issues. To mitigate this, two approaches can be employed: using high-precision arithmetic or solving a linear system (see \eqref{eq:linear_system_definition} below) derived from the recurrence relation (referred to as the \emph{linear system approach}). In Section~\ref{sec:validated_numerics_comparisons}, we implement Methods 1 and 2 alongside interval arithmetic and compare their performance in Section~\ref{sec:ExampleValidated Numerics}. Before doing so, let us provide further details on our implementation of the linear system approach. \\

\noindent {\bf Method 2 (Linear System Approach).} For a fixed $x\in(a,b)$, rewriting \eqref{eq:recurrence_formula} into a matrix form 
\begin{equation} \label{eq:linear_system_definition}
    \bm{A}(x)\bm{p}(x) = \bm{e}_{1},
\end{equation}
where $\bm{A}(x)\in \R^{(N+1)\times (N+1)}$ and $\bm{p}(x)$, $\bm{e}_{1}\in\R^{N+1}$ are defined by
\begin{equation}\label{eq:matrix_F}
\bm{A}(x) \bydef \begin{pmatrix}
1 &  &  & &\\
\vartheta_0(x) & 1 & \ddots & \ddots & \\
\gamma_1 & \vartheta_1(x)  & 1 & \ddots & \\
&  &  & \ddots & \\
&  & \gamma_{N-1} & \vartheta_{N-1}(x) & 1
\end{pmatrix},\quad \vartheta_n(x) \bydef -(\alpha_n x - \beta_n),
\end{equation}
and
\begin{equation}
\bm{p}(x)\bydef \left(p_0(x),p_1(x),\dots,p_N(x)\right)^T,
\quad \bm{e}_{1}\bydef (1,\dots,0,0)^T,
\end{equation}
respectively. Then, setting $x=x_j$ for $j=0,\dots,N$ in \eqref{eq:matrix_F}, the solution of the linear system $\bm{p}(x_j) = \bm{A}(x_j)^{-1}\bm{e}_{1}$ gives the $j$-th column of the MMT matrix $M$, that is the value of the orthogonal polynomials at the grid points $p_n(x_j)$. To obtain a rigorous enclosure of the solution to $\bm{A}(x_j) \bm{p}(x_j) = \bm{e}_{1}$, we can use the Newton-Kantorovich Theorem~\ref{thm::RadiiNonLin} in conjunction with interval arithmetic to rigorously determine the location of the solution to the system defined by
\begin{equation} \label{eq:linear_system_approach_F=0}
F^{x_j}(p) \bydef \bm{A}(x_j) p - \bm{e}_{1}.
\end{equation}
We refer to this way of evaluating the orthogonal polynomials at the grid points, that is $p_n(x_j)$, as the \emph{linear system approach}. Although this approach may initially seem excessive for solving a linear system, the Newton-Kantorovich Theorem requires only an approximate inverse, which can be computed without interval arithmetic, thereby minimizing the wrapping effect. This is further explained in Section~\ref{sec:validated_numerics_comparisons}.

The third approach (e.g. see \cite{Golub-Welsch}) we introduce for evaluating the orthogonal polynomials at the grid points utilizes the eigenvectors of $\cA_{N+1}$. \\

\noindent {\bf Method 3 (via the eigenvectors of \boldmath$\cA_{N+1}$\unboldmath).} The matrix $\cA_{N+1}$ is derived from the similarity transformation $\cA_{N+1} = DJD^{-1}$. Here, $J$ is the following non-symmetric tridiagonal matrix, called the \emph{Jacobi matrix}:
\begin{align}\label{eq:J_matrix}
    J=\left(\begin{array}{ccccc}
		\beta_0 / \alpha_0 & 1 / \alpha_0 & & & \\
		\gamma_1 / \alpha_1 & \beta_1/\alpha_1 & 1 / \alpha_1 & & \\
		 & \ddots & \ddots & \ddots & \\
		& & \gamma_{n-1} / \alpha_{n-1} & \beta_{n-1} / \alpha_{n-1} & 1 / \alpha_{n-1} \\
		& & & \gamma_n / \alpha_n & \beta_n / \alpha_n
	\end{array}\right),
 \end{align}
 where $\alpha_n$, $\beta_n$, and $\gamma_n$ are the coefficients in the recurrence formula given in \eqref{eq:abc} satisfying the three-term recurrence formula \eqref{eq:recurrence_formula}.
 The diagonal similarity transformation is performed via the matrix $D$ given by
 \begin{align}\label{eq:D_matrix}
     D=
	\begin{pmatrix}
		d_0 & & & \\
		& d_1 & & \\
		& & \ddots & \\
		& & & d_n
	\end{pmatrix},\quad
	d_{i+1}=\sqrt{\frac{\alpha_{i+1}}{\alpha_i \gamma_{i+1}}} d_i\quad(d_0=1).
\end{align}
Considering an eigenpair $(x_j,Q(x_j))$ satisfying $\cA_{N+1} Q(x_j) = x_j Q(x_j)$, then
from the similarity transformation $\cA_{N+1} = DJD^{-1}$ given in \eqref{eq:J_matrix} and \eqref{eq:D_matrix}, it follows that
\[
DJD^{-1}Q(x_j)=x_j {Q}(x_j) \iff JD^{-1}{Q}(x_j)=x_j D^{-1}{Q}(x_j).
\]
Here, the eigenvalues of $J$ are the same as that of $\cA_{N+1}$. Since the matrix $J$ directly comes from the recurrence formula \eqref{eq:recurrence_formula}, eigenvectors of $J$, that is $D^{-1}{Q}(x_j)$, are equal to the values of the orthogonal polynomial, i.e., $(p_n(x_j))_{n=0}^N$, except for the scaling freedom.
To fix the scale satisfying $p_0=k_0$, we have
\begin{equation} \label{eq:evaluating_the_orthogonal_polynomials_at_grid_points}
    p_n(x_j)=\frac{(D^{-1}{Q}(x_j))_n}{(D^{-1}Q(x_j))_0}k_0,
\end{equation}
where $(D^{-1}Q(x_j))_n$ denotes the $n$-th component of the vector $D^{-1}{Q}(x_j)$. Formula \eqref{eq:evaluating_the_orthogonal_polynomials_at_grid_points} is yet another method to evaluating the orthogonal polynomials at the grid points, hence providing a way to compute the entries of the MMT matrix $M$ defined in \eqref{eq:MMT_matrix}.


\subsubsection{Computing the scaling factors \boldmath$W_n$\unboldmath} \label{sec:W_n}

Deriving analytic expressions for the scaling factors $W_n$ defined in \eqref{eq:W_n_scalar} for different families of orthogonal polynomials is a well-established area of study with a rich history. Various tables, such as those found in \cite{MR1773820}, provide these expressions for many significant families, including Legendre, Chebyshev, Hermite, and Laguerre polynomials. For the Jacobi polynomials $P^{k,m}_n$ with $k$ and $m$ non negative integers, it is given by 
\begin{equation} \label{eq:JacobiScalingFactors}
    W_n = 
    W_{n}^{k,m} = \frac{2^{k+m+1}}{2n+k+m+1}\frac{ (n+k)! (n+m)!}{(n+k+m+1)!n!}.
\end{equation}
For non-integer values of $k,m>-1$, this formula extends to the use of Gamma functions. Additionally, for validated numerical computations, it is crucial to handle cancellations carefully when dividing factorials by other factorials --- e.g. $ \frac{ (n+k)! (n+m)!}{(n+k+m+1)!n!} = \frac{\prod_{i=1}^k (n+i)}{\prod_{i=1}^{k+1} (n+m+i)}$ --- to avoid a significant increase in round-off errors.

\section{Validated numerics and comparisons} \label{sec:validated_numerics_comparisons}
 
The discussion of Section~\ref{sec:orthogonal_polynomials_and_MMT} has been a review of standard orthogonal polynomials, Gaussian quadrature and numerical analysis concepts. While Gaussian quadrature is theoretically exact when using the correct nodes $x_j$ and weights $\omega_j$, this requires an accurate calculation of these nodes and weights, as well as their evaluation under the relevant polynomials. However, computers use floating point arithmetic, which operates with finite precision. In numerical quadrature, the primary challenge to achieving accuracy often stems from rounding errors and the limitations of finite precision arithmetic \cite{barrio2002rounding,hale2013fast,bogaert2014iteration}.

As numerical quadrature serves as the foundation for subsequent computations, any imprecision introduced here will propagate. At a fundamental level, if a node is computed with machine precision, $x_i = \bar{x}_i \pm 10^{-16}$, the error in its evaluation under a function $f$ will be at least $|f(x_i) - f(\bar{x}_i)| \approx |f'(x_i)| \times 10^{-16}$. For Legendre polynomials, their derivative at the endpoint is given by $P_n'(1) = \frac{n(n+1)}{2}$. Hence, for a truncation at $N = 100$, the error could be amplified by four orders of magnitude. A more accurate calculation of $f(x_i)$ can be achieved by computing $x_i$ with multiple precision. While precision will still be lost when evaluating $f(x_i)$, starting with higher precision ensures that the final result is sufficiently accurate. However, using multiple precision requires more memory and is slower compared to double precision. 

To synthesize the various types of errors encountered in numerical computations, we employ validated numerics \cite{MR2807595,MR2652784,MR1849323,MR1420838,MR3444942}. A fundamental aspect of validated numerics is interval arithmetic, which rigorously bounds rounding errors. This approach allows us to derive explicit \emph{a posteriori} error bounds for our results. Although limited work has been done in the area of Gaussian quadrature using validated numerics \cite{storck1993verified}, these methods enable us to set specific accuracy goals; for instance, we may aim for the entries of the MMT/iMMT matrices $M$ and $M^{-1}$ to achieve an accuracy of $10^{-16}$. While obtaining the nodes with high (and therefore slower) precision may be necessary, we can subsequently store the computed matrices $M$ and $M^{-1}$ as a double matrix.

Recall from Sections~\ref{sec:compute_x_j} and~\ref{sec:compute_omega_j} that computing the nodes $\{ x_j \}_{j=0}^N$ and weights $\{ \omega_j \}_{j=0}^N$ first requires determining all eigenpairs of the matrix $\cA_{N+1}$ defined in \eqref{eq:matrix_A_Nplus1}. We interpret each eigenpair as a zero of the map specified in \eqref{eq:eigenpair_F=0}. Furthermore, as described in Section~\ref{sec:computing_p_n(x_j)}, Method 2 for evaluating $ p_n(x_j)$ involves selecting a grid point $x_j$ and finding the zero of the map given in \eqref{eq:linear_system_approach_F=0}. To solve these two zero-finding problems, we apply a Newton-Kantorovich theorem that rigorously encloses the zeros, presented below in a general setting for maps between Banach spaces. This theorem offers rigorous \emph{a posteriori} estimates for locating zeros of a function. Moreover, we will use this theorem to establish the existence of solutions to elliptic semilinear PDEs on the disk, as detailed in Section~\ref{sec:PDE}.

\begin{theorem}[\bf Newton-Kantorovich Theorem] \label{thm::RadiiNonLin}
Let $X$ and $X'$ be Banach spaces and $F: X \to X'$ be a Fr\'echet differentiable mapping. Suppose ${x}_0 \in X$ and $A \in B(X',X)$. Moreover assume that $A$ is injective. Let $Y_0$ and $ Z_1$ be positive constants and $Z_2:(0,\infty) \to [0,\infty)$ be a non-negative function satisfying 
\begin{align*}
\| AF({x}_0) \|_X &\leq Y_0, \\
\| I - A DF({x}_0)\|_{B(X)} &\leq Z_1, \\
\|A[DF(c) - DF({x}_0)] \|_{B(X)} &\leq Z_2(r)r, \quad \text{ for all } \quad c \in \overline{B_r({x}_0)} \text{ and all } r>0.
\end{align*}
Define
\begin{align*}
p(r) = Z_2(r)r^2 - (1-Z_1)r + Y_0.
\end{align*}
If there exists $r_0>0$ such that $p(r_0) < 0 $, then there exists a unique $\tilde{x} \in \overline{B_{r_0}({x}_0)} $ satisfying $F(\tilde{x}) = 0$.
 \end{theorem}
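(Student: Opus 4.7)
The plan is to verify the hypotheses of the Banach fixed point theorem for the Newton-like operator $T:X\to X$ defined by $T(x)\bydef x - AF(x)$, restricted to the closed ball $\overline{B_{r_0}(x_0)}$. If $T$ has a unique fixed point $\tilde{x}$ there, then $AF(\tilde{x})=0$, and since $A$ is injective we conclude $F(\tilde{x})=0$.

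The first step is to show $T$ maps $\overline{B_{r_0}(x_0)}$ into itself. For $x\in\overline{B_{r_0}(x_0)}$, I would write
\begin{equation*}
T(x)-x_0 = -AF(x_0) + (I-ADF(x_0))(x-x_0) - A\int_0^1 \bigl[DF(x_0+t(x-x_0))-DF(x_0)\bigr](x-x_0)\,dt,
\end{equation*}
using the fundamental theorem of calculus in Banach spaces applied to $F(x)-F(x_0)$. Taking norms, the three hypotheses bound the three summands by $Y_0$, $Z_1\|x-x_0\|_X$, and $Z_2(r_0)r_0\|x-x_0\|_X$, respectively, yielding
\begin{equation*}
\|T(x)-x_0\|_X \le Y_0 + Z_1 r_0 + Z_2(r_0) r_0^2 = p(r_0)+r_0 < r_0,
\end{equation*}
since $p(r_0)<0$. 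This gives self-mapping.

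Next I would show $T$ is a contraction on $\overline{B_{r_0}(x_0)}$. For $x,y$ in this ball,
\begin{equation*}
T(x)-T(y) = (I-ADF(x_0))(x-y) - A\int_0^1 \bigl[DF(y+t(x-y))-DF(x_0)\bigr](x-y)\,dt,
\end{equation*}
where the argument $y+t(x-y)$ lies in $\overline{B_{r_0}(x_0)}$ by convexity. Taking norms,
\begin{equation*}
\|T(x)-T(y)\|_X \le \bigl(Z_1 + Z_2(r_0)r_0\bigr)\|x-y\|_X.
\end{equation*}
The condition $p(r_0)<0$ rewritten as $Z_2(r_0)r_0^2 + Y_0 < (1-Z_1)r_0$ gives $Z_1+Z_2(r_0)r_0 < 1 - Y_0/r_0 \le 1$, so the Lipschitz constant is strictly less than one.

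By the Banach fixed point theorem, $T$ has a unique fixed point $\tilde{x}\in\overline{B_{r_0}(x_0)}$, equivalent to $AF(\tilde{x})=0$; injectivity of $A$ then yields $F(\tilde{x})=0$, and uniqueness of the zero in the ball follows from uniqueness of the fixed point. The main subtlety I expect is the correct use of the mean value inequality in Banach spaces (namely the Bochner-integral form of the fundamental theorem of calculus) which requires $F$ to be $C^1$ along the relevant segments; this is supplied by Fréchet differentiability together with the continuity assumption implicit in the hypothesis that the $Z_2$ bound holds on closed balls. Nothing else should be delicate.
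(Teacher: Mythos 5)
The paper states this theorem without proof, treating it as a standard tool; your argument is the classical one and is correct. The decomposition of $T(x)-x_0$ and $T(x)-T(y)$ into an $AF(x_0)$ piece, a $(I-ADF(x_0))$ piece, and a remainder controlled by $Z_2$ is exactly right, and the convexity observation ensuring the segment stays in $\overline{B_{r_0}(x_0)}$ is the correct justification for applying the $Z_2$ bound with $r=r_0$. One small point worth tightening: rather than invoking the Bochner-integral fundamental theorem of calculus (which would indeed want continuity of $DF$ along the segment, and that continuity is \emph{not} supplied by the $Z_2$ bound as you suggest --- that bound only controls a norm, not modulus of continuity), you can bypass the issue entirely by using the mean value inequality for Fr\'echet-differentiable maps on convex sets, $\|F(b)-F(a)-L(b-a)\|\le \|b-a\|\sup_{0\le t\le 1}\|DF(a+t(b-a))-L\|$, which requires no integrability and yields the same estimates verbatim with $L=ADF(x_0)$ (more precisely, applied after composing with $A$). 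Everything else is sound.
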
 

Typically, the vector $x_0$ is a numerical approximation, while the operator $A$ serves as an approximate inverse of the derivative $DF(x_0)$.

\subsection{Comparison: Evaluation of the Jacobi Polynomials}\label{sec:ExampleValidated Numerics}

We now turn to the evaluation of Jacobi polynomials as introduced in Definition~\ref{def:jacobi_polynomials}. Specifically, we consider the case $k=m=1$ and analyze the performance of the first two methods described in Section~\ref{sec:computing_p_n(x_j)}: the Forsythe algorithm and the linear system approach, which employs Theorem~\ref{thm::RadiiNonLin} to determine the zeros of the map \eqref{eq:linear_system_approach_F=0}. We compare these methods under a fixed working precision, focusing on both runtime performance (illustrated in Figure~\ref{fig:1_1}) and the error associated with the evaluation of the Jacobi polynomials (shown in Figure~\ref{fig:1_2}). For the runtime assessment, we varied the degree $N$ of the Jacobi polynomial and computed $P_N^{k,m}$ at a random point in $[-1,1]$.

As depicted in Figure~\ref{fig:1_1}, Forsythe algorithm significantly outperforms the linear system approach in terms of runtime, owing to its computational complexity of $\cO(N)$ compared to the $\cO(N^3)$ complexity of the linear system approach. However, as illustrated in Figure~\ref{fig:1_2}, the interval radii produced by Forsythe algorithm exhibit much greater instability. In contrast, while the interval radii for the linear system approach also increase with $N$, they are more controlled. This discrepancy arises from the inherent wrapping effect associated with recursion-based algorithms, for which interval arithmetic is particularly ill-suited. Additionally, for both methods, the output interval radii are insufficient to achieve $10^{-16}$ accuracy for the MMT matrix $M$ defined in \eqref{eq:MMT_matrix}. This limitation stems from the large magnitudes of polynomial values as $N$ increases. Therefore, employing higher precision in numerical computations may be necessary.

\begin{figure}[h!]\centering
	\begin{minipage}[b]{0.48\linewidth}
		\centering
		\includegraphics[width=\textwidth]{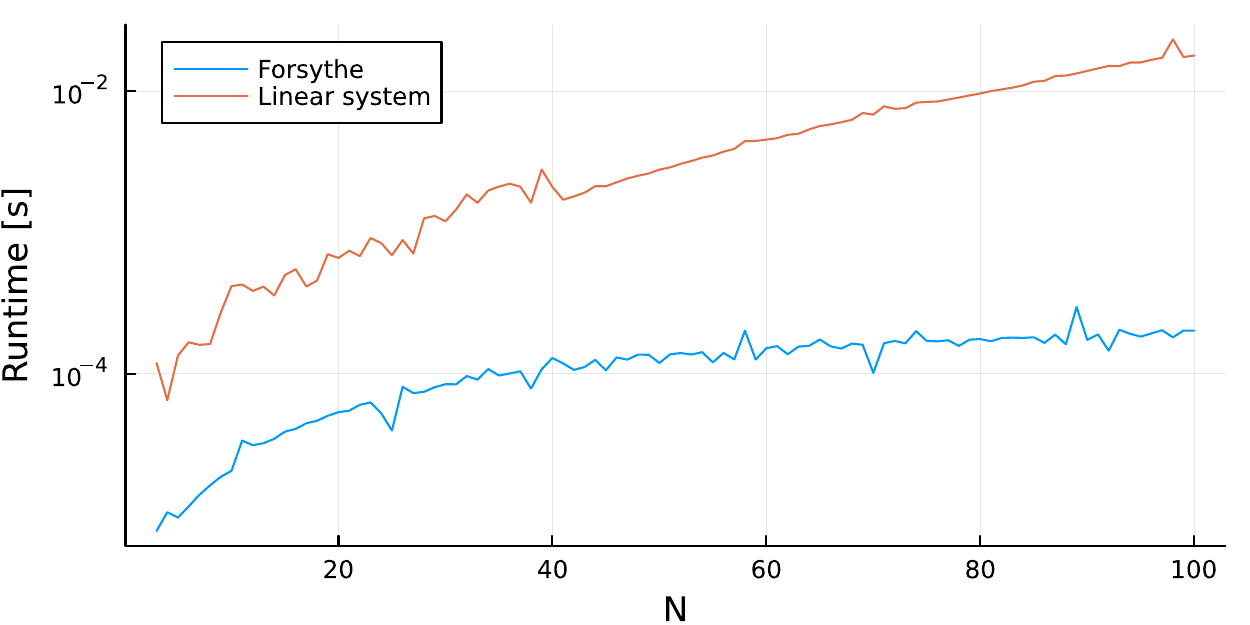}
		\subcaption{The plot illustrates the runtime (in seconds) as a function of the polynomial degree $N$ for evaluating $P^{1,1}_N(x)$ at a random point $x \in [-1,1]$ using interval arithmetic. It is evident that the Forsythe algorithm is significantly faster than the linear system approach.\\ \null} 
        \label{fig:1_1}
	\end{minipage}~
	\begin{minipage}[b]{0.48\linewidth}
		\centering
		\includegraphics[width=\textwidth]{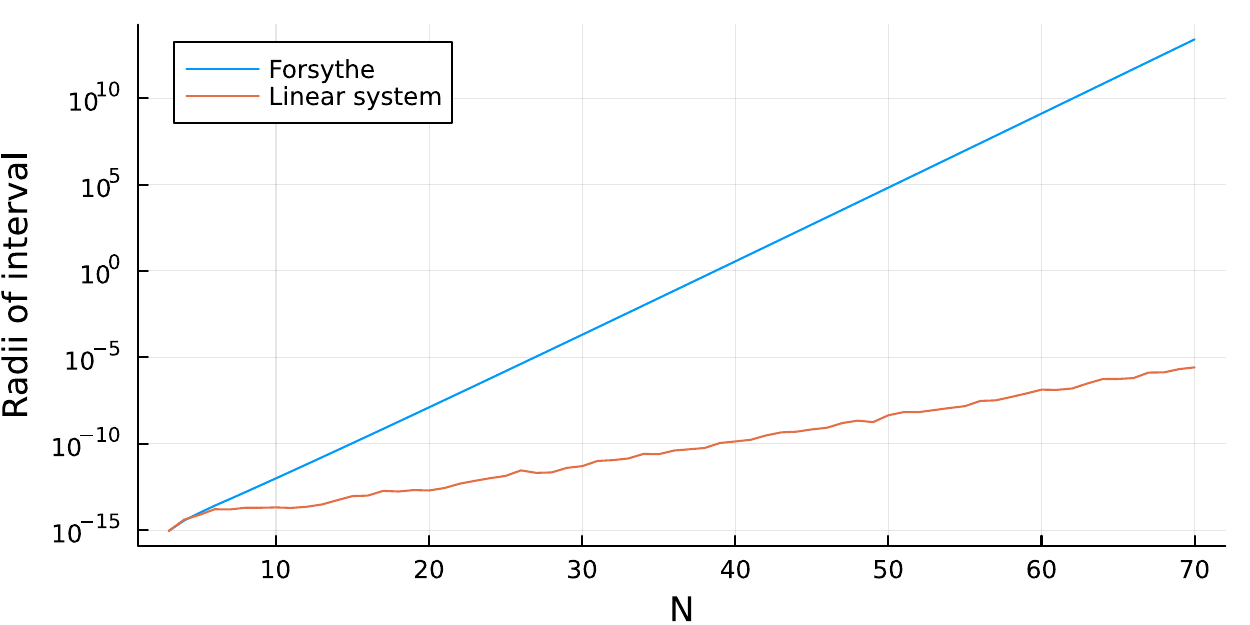}
		\subcaption{The relationship between the polynomial degree $N$ and the maximum radius of the output interval inclusions for evaluating $P^{1,1}_N$ at 50 random points within $[-1,1]$ is shown. We observe that the error bounds for the Forsythe algorithm increase rapidly, primarily due to the wrapping effect that arises when using interval arithmetic for the recursion formula.}\label{fig:1_2}
	\end{minipage}
	\caption{Comparison of evaluating $P_n^{1,1}$ with interval arithmetic via Forsythe algorithm and the linear system approach in terms of (a) running time and (b) resulting error in evaluations.}
    \label{fig:1}
\end{figure}

\begin{figure}[h!] \centering
	\begin{minipage}[b]{0.48\linewidth}
		\centering
		\includegraphics[width=\textwidth]{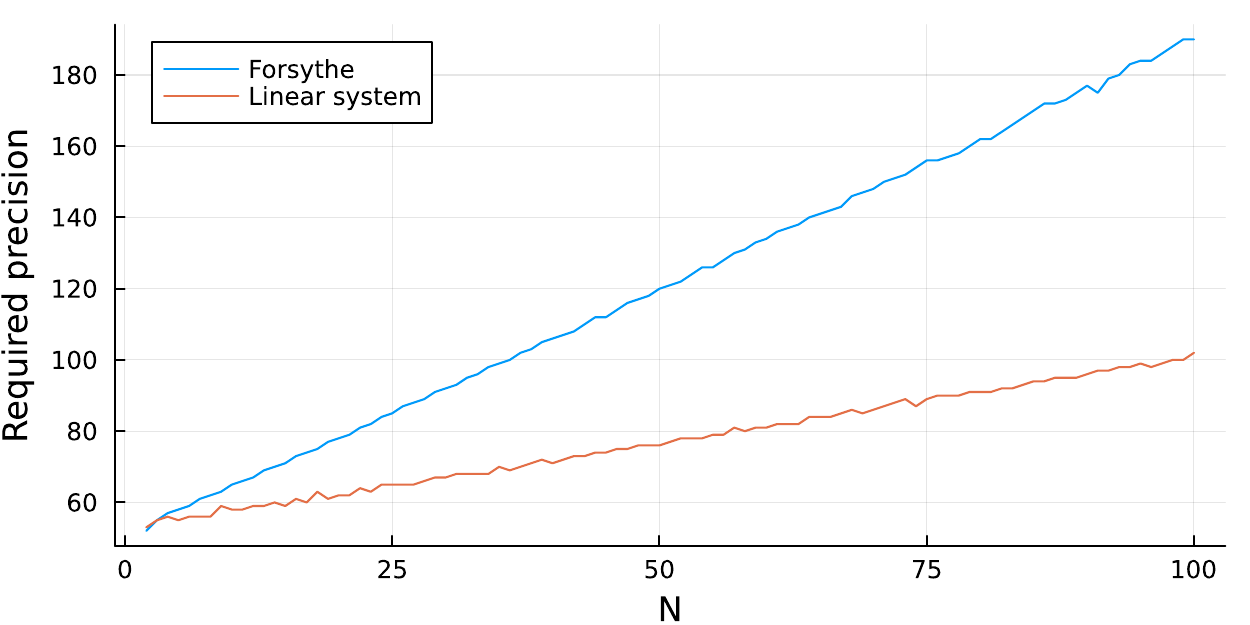}
		\subcaption{In the plot, we illustrate the precision of the arithmetic (as a function of $N$ required to ensure that the output radius of the interval inclusion of $P_N^{1,1}(x)$ is below the machine precision $\epsilon$ for 64-bit floating-point numbers. It is observed that both algorithms require more than $N$-precision (which corresponds to high-precision arithmetic when $N > 64$ to achieve the machine precision $\epsilon$.}
        \label{fig:2_1}
	\end{minipage}~
	\begin{minipage}[b]{0.48\linewidth}
		\centering
		\includegraphics[width=\textwidth]{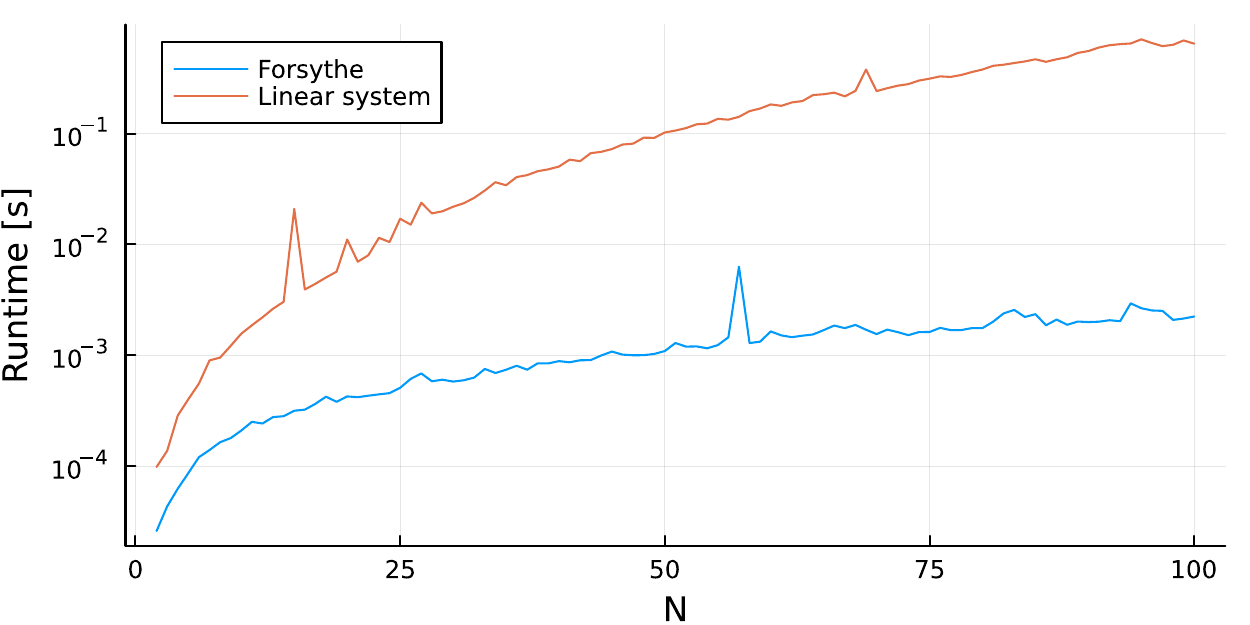}
		\subcaption{The runtime (as a function of the polynomial order $N$) for each algorithm (Forsythe and linear system approach) when evaluating the polynomial $P_N^{1,1}(x)$ with the necessary high-precision arithmetic (depicted in the left figure) to achieve the machine precision $\epsilon$. The plot reveals that the Forsythe algorithm requires less time for evaluation, despite needing higher precision numbers.}\label{fig:2_2}
	\end{minipage}
\caption{Comparison of each algorithm in terms of (a) required precision to achieve the output accuracy below machine precision and (b) running time with the required precision. Forsythe algorithm is the appropriate choice if arbitrary precision floating-point numbers are available.}\label{fig:2}
\end{figure}

We now examine the necessary working precision to ensure that the evaluation of $P_N^{k,m}(x)$ attains machine precision for 64-bit floating-point numbers (approximately $\approx10^{-16}$ accuracy). The implementation is based on the Julia programming language \cite{Julia-2017}, utilizing the \texttt{BigFloat} type in conjunction with the \texttt{IntervalArithmetic.jl} package \cite{IntervalArithmetic.jl}. The precision of \texttt{BigFloat} numbers is controlled by the \texttt{setprecision} function, which adjusts the number of significant bits in the floating-point representation\footnote{More specifically, \texttt{setprecision} sets the ``number of bits for significant + 1'', as defined in the Julia language.}.

We selected several random points $x\in[-1,1]$ and adjusted varied the precision of \texttt{BigFloat} to obtain accurate results for the polynomial evaluations. In Figure \ref{fig:2_1}, we present the required precision for each degree $N$ to ensure that the output radius of the interval inclusion is below the machine precision. It is observed that the Forsythe algorithm, which uses recursive formulas, demands significantly higher precision compared to the linear system approach. However, the linear system approach still requires high precision to achieve machine precision, albeit to a lesser extent than the Forsythe algorithm.

In addition, Figure \ref{fig:2_2} illustrates the runtime for evaluating the polynomial with the required precision for each algorithm. Despite the Forsythe algorithm requiring higher precision, it is computationally more efficient and results in faster evaluation times compared to the linear system approach. 

From these observations, we conclude that Forsythe algorithm is the preferred method when arbitrary-precision floating-point numbers are available. This conclusion follows from the computational complexity differences between the algorithms, as well as the capabilities of the Julia environment, where the computation of arbitrary-precision numbers is not prohibitively expensive. Conversely, in environments where interval arithmetic with arbitrary-precision numbers is not supported, the linear system approach is likely to be the more favorable choice.






\section{Zernike Polynomials}
\label{sec:Zernike}

Chebyshev and Legendre polynomials are among the most commonly used orthogonal polynomials on $[-1,1]$, corresponding to specific cases of Jacobi Polynomials (cf. Definition~\ref{def:jacobi_polynomials}) with weights $(-1/2,-1/2)$ and $(0,0)$. 
Other weights $(k,m)$ commonly arise when making a change of coordinates to $[-1,1]$ from another geometry, e.g., spherical or polar coordinates. 
Such is the case for defining a function basis on the disk $ \bD = \{ x^2 + y^2 \leq 1\}$, where typically one takes a polar-radial function basis
\[
    f(\theta,r)=\sum_{m=0}^\infty \left( a_{m} \cos m \theta + b_{m} \sin m \theta \right)  p_{m}(r), \quad p_{m}(r)= \sum_n  c_{m,n} p_{m,n}(r)
\]
for $\theta \in [0,2\pi], r \in [0,1]$, and some basis of radial basis functions $p_{m,n}:[0,1] \to \R$.

There are numerous ways to define the basis functions $p_{m,n}$, each with its own advantages and disadvantages, as discussed in \cite{boyd2011comparing}. A key consideration in this context is the so-called pole condition at $r=0$. Specifically, for a function $f$ to be smooth at $r=0$, the basis function $p_m(r)$ must possess a zero of order $m$ at $r=0$. This condition can either be imposed numerically or inherently satisfied by selecting appropriate basis functions $p_{m,n}$ that naturally fulfill this requirement.

There is no universal ``best'' basis for radial functions. 
While Bessel functions form an eigenbasis for the Dirichlet Laplacian on the disk, their product is generically an infinite series of other Bessel functions, which lacks desirable approximation properties. A commonly used basis for radial functions is given by rescaled Chebyshev functions, which allow efficient evaluation of nonlinearities using the FFT in both variables. Additionally, smooth functions have rapidly converging Chebyshev expansions in this basis. However, this basis does not inherently satisfy the pole condition, and the pseudo-differential operator arising from the Galerkin truncation leads to heptadiagonal matrices, i.e., banded matrices of width seven, as discussed in the works of Boyd \cite{boyd2011comparing} and Shen \cite{shen2000new}.

An effective compromise is provided by the \emph{Zernike polynomials}, also known as \emph{disk polynomials} or \emph{one-sided Jacobi polynomials} and are defined on the unit disk. Zernike polynomials \cite{Zernike,BornWolf1999,Noll1976} are a sequence of orthogonal polynomials  with respect to the natural $L^2$ inner product on the disk, similar to the Legendre polynomials, and possess favorable approximation properties. Named after Frits Zernike, the optical physicist who received the 1953 Nobel Prize in Physics for inventing phase-contrast microscopy, these polynomials are widely used in optics, image analysis, and wavefront fitting. They are particularly useful in describing wavefront aberrations in circular apertures, such as lenses or mirrors in optical systems. Additionally, the natural differential operators on the disk induce relatively simple pseudo-differential operations on the coefficients of the Zernike polynomials. This is analogous to the situation with Chebyshev polynomials of the first kind, where derivatives are most efficiently expressed in terms of Chebyshev polynomials of the second kind, as demonstrated by the identity $ \frac{d}{dx} T_n(x)= n U_{n-1}(x)$. Similarly, an analogous relationship exists for the derivatives of Zernike polynomials.



In the rest of this section, we define formally the Zernike polynomials, demonstrate that the function space of Zernike series polynomials forms a Banach algebra, and apply the MMT approach outlined in Section~\ref{sec:orthogonal_polynomials_and_MMT} to the multiplication of Zernike polynomials.
Evaluating nonlinearities in a Zernike polynomial basis is computationally more expensive than in a Chebyshev polynomial basis, primarily due to the lack of an FFT-like algorithm for Zernike polynomials. However, our MMT approach achieves a computational complexity of \( \cO(N^2) \) with respect to the polynomial degree, as noted in Remark~\ref{rem:CGvsMMT}.

\subsection{Definition of the Zernike polynomials}


As there is not a universally agreed upon notation for Zernike polynomials (e.g. see \cite{koornwinder1978positivity,1985_kanjin_banach_algebra,matsushima1995spectral,boyd2011comparing,vasil2016tensor,arioli2019non}), we now fix our own notation. 

\begin{definition}[\bf Zernike polynomials]
For each  $ k>-1$, and azimuthal wave number $m \in \Z$, we define the $n^{th}$ Zernike polynomial $\mathcal{Q}_n^{k,m}$ with weight $(k,m)$ as
\begin{align}
\mathcal{Q}_n^{k,m}(r,\theta) &\bydef e^{i m \theta } Q_n^{k,m}(r),
&
Q_n^{k,m}(r) &\bydef r^{|m|} P_n^{k,|m|}(x)
\end{align}
for $ x = 2 r^2 -1$ and Jacobi polynomials $P_n^{k,|m|}(x)$, introduced in Definition~\ref{def:jacobi_polynomials}. We also refer to $Q_n^{k,m}(r)$ as the radial Zernike polynomials.
\end{definition}


By writing a complex numbers as $z = r e^{i \theta}$, we may also interpret $ \cQ^{k,m}_n$ as being defined on the complex disk, $ \cQ^{k,m}_n:\bD\to\C$.  In this manner we  may write general functions $ u:\bD\to\C$ as a Zernike polynomials series
\[
       u(r,\theta)= \sum_{m\in \Z,n \geq 0 } {u}_{m,n} \,\cQ_n^{k,m}(r,\theta) 
       = \sum_{m\in \Z  }  \,e^{i m \theta }  \sum_{n\geq 0} {u}_{m,n} Q_n^{k,m}(r) 
\]
which exhibit spectral convergence for analytic functions (e.g. see \cite{Walsh,matsushima1995spectral,boyd2011comparing}).


It is common in the literature to define Zernike polynomials as real-valued functions; however, we opt for a complex-valued definition. This choice mirrors the distinction between Fourier series and trigonometric series. The complex formulation proves particularly advantageous when analyzing the expansion of pseudo-differential operators in terms of Zernike polynomials, as discussed in Section~\ref{sec:PseudoDiff}. For all $m \in \Z$ and $z \in \bD$, we have the relationship 
\begin{equation}\label{eq : complex conjugate of jacobi}
    \mathcal{Q}_n^{k,-m}(z) = \overline{ \mathcal{Q}_n^{k,m}(z)}.
\end{equation}
 When restricting to real-valued functions $u: \bD \to \R$, the coefficients must satisfy $u_{m,n} = \bar{u}_{-m,n}$. Furthermore, by identifying $\C \cong \R^2$, Zernike polynomials offer an elegant representation for vector fields defined on the disk. This representation is particularly useful for translating a vector-valued PDE into its corresponding spectral formulation.

\subsection{Banach Algebra} \label{sec:BanachAlgebra}

Having established the Zernike polynomials, we now formalize a Banach space consisting of rapidly decaying Zernike coefficients of functions expressed as series in terms of Zernike polynomials. We will demonstrate that this sequence space forms a Banach algebra under a suitably chosen weighted norm. This structure will be particularly useful for the nonlinear analysis required in the computer-assisted proofs of existence for solutions to elliptic semilinear PDEs on the disk, as discussed in Section~\ref{sec:PDE}.

\begin{definition}
Fix a Banach space $X$ and a product structure $ * : X \times X \to X$. Then $(X,*)$ is said to be a Banach algebra if for all $a , b \in X$,
\[
\| a * b \|_X \leq \| a\|_X \| b \|_X.
\]
\end{definition}

It is well-established that the space of functions represented as series in Zernike polynomials can be endowed with a Banach algebra structure under an appropriate norm \cite{1985_kanjin_banach_algebra, arioli2019non}. By choosing a suitably weighted norm, this space can be made to correspond to either real analytic functions or weakly differentiable functions. Below, we provide a definition of admissible weighted norms that yield a Banach algebra, thereby generalizing several results previously established in the literature.

\begin{definition} \label{def:AdmissibleWeights}
    We say that a sequence of weights $(w_{m,n})_{(m,n) \in \mathbb{Z}\times \mathbb{N}_0}$  is admissible if: 
\begin{align*}
    w_{m,n} &\geq 1, &
    w_{m,n} &\leq w_{m,n+1}
\end{align*}
and the weights are submultiplicative in the following sense:
\begin{align*}
    w_{m_1+m_2,n'} &\leq w_{m_1,n_1} w_{m_2,n_2}, & n'= n_1+n_2+\frac{|m_1|+|m_2|-|m_1+m_2|}{2}.
\end{align*}
\end{definition}

We note that the radial Zernike polyomials $ Q^{k,m}_n(r)$ are polynomials of degree $ 2n+|m|$, and some possible choices of $w$ which grow geometrically or algebraically are:
\begin{align}
    w_{m,n} &=  \nu^{2n+|m|} ,&  w_{m,n} &=  (1+2 n+ |m|)^s
\end{align}
for $\nu \geq 1$ or $ s \geq 0$.  The proof is left to the reader. 
The geometric weights correspond to analytic functions, whereas the algebraic weights are analogous to a Sobolev norm (e.g. see Definition~\ref{def:WeightedNorms}).

As we will discuss further in Section \ref{sec:PseudoDiff}, the index $k$ in the definition of Zernike polynomials is a grading which essentially corresponds to derivatives, similar to Chebyshev polynomials of the first, second, etc kind. 
If $-1<k\leq0$ then all the polynomials $Q^{0,m}_n(r)$ are bounded between $-1$ and $1$, however in general $\|Q^{k,m}_n\|_{L^\infty}$ grows algebraically in $n$.  
In particular for integers $ k \geq 0$ we have:
\[
\sup_{r \in [0,1]} \left| Q^{k,m}_n(r) \right| = Q^{k,m}_n(1) =
\begin{pmatrix}
		k+n\\
		n
	\end{pmatrix}.
\]
Other works in the literature commonly normalize the basis elements by this factor. We opt to incorporate it into our norm.

\begin{definition} \label{def:WeightedNorms}
	Fix an admissible sequence of  weights $(w_{m,n})_{(m,n) \in \mathbb{Z}\times \mathbb{N}_0}$,  and define the symbol 
 \[
 \langle m,n \rangle_{k,w} = 
 w_{m,n}
		\begin{pmatrix}
			k+n\\
			n
		\end{pmatrix}.
 \]
 Define the weighted sequence space $V^k $ as
	\begin{align}
		V^k &\bydef \{ a=  \{a_{m,n} \}_{m\in\Z,n\in\N} \subseteq \C : \| a \|_{V^k} < \infty \}  ,
		&
		\| a \|_{V^k} &\bydef  \sum_{m\in \Z, n\in \N} 
		|a_{m,n}| \langle m,n \rangle_{k,w}.
	\end{align}
\end{definition}


Analogous to the discrete Fourier transform, we may map a given sequence of Zernike coefficients to the corresponding series of Zernike polynomials. 
Formally, for each $ k \in \N$, define a transform $ \cM : V^{k} \to C( \mathbb{D} , \C)$ given below, where for $ a \in V^{k}$  and  $z=r e^{i \theta} \in \bD$ we define
\[
\cM [a](z) \bydef \sum_{m \in \Z} \sum_{n \in \N}   a_{m,n} \cQ^{k,m}_n (z).
\]
Similarly, as the Zernike polynomials form a Schauder basis, an inverse transform $ \cM^{-1}$ may be defined to recover the Zernike coefficients of a function $u: \bD \to \C$, 
assuming $u$ has sufficient regularity  \cite{Walsh,matsushima1995spectral,vasil2016tensor}. 




Corresponding to the sequence space $V^k$ --- and analogous to the Wiener algebra of functions with absolutely convergent Fourier series --- let us define a space consisting of functions whose Zernike coefficients are in $V^k$. 
\begin{align}
    \tilde{V}^k &= \left\{  u \in L^2( \bD , \C) :  \cM^{-1} (u) \in V^k
\right\},
&
\| u \|_{\tilde{V}^k} = \| \cM^{-1}u \|_{V^k}.
\end{align}
Note that by definition $ \tilde{V}^k$ is isometrically isomorphic with $ V^k$. 
Furthermore, we have the bound $\| u \|_{C(\bD,\C)} \leq \| u\|_{\tilde{V}^k}$. 

In further analogy with the discrete Fourier transform, the product  of functions in $C( \mathbb{D} , \C)$ induces a discrete convolution of their coefficients in $V^k$.

\begin{definition}
	For $ a ,b\in V^{k}$, we define a discrete convolution product $ * : V^{k} \times V^{k} \to V^{k}$ by
	\[
	a * b = \cM^{-1} \left(  \cM[a] \cdot \cM[b]  \right). 
	\]
\end{definition}

We also will consider the subspace $ V^{k,m} \subseteq V^k$ obtained by restricting to a single wave number:
	\begin{align}\label{def : definition of Vkm}
		V^{k,m} &\bydef
		\left\{ a \in V^k : a_{m',n}=0 \mbox{ whenever } m' \neq m
  \right\}.
	\end{align}
As mentioned earlier,  the space of Zernike polynomials forms a Banach algebra for an adequately chosen norm.  
A key element for this proof is the following result on positivity of linearization coefficients:
\begin{lemma} \label{prop:BanachAlgebraRange}
    Fix $ k $ and $ m_1, m_2 \in \Z$. Then  $*$ restricts to a map $ * :V^{k,m_1} \times V^{k,m_2} \to V^{k,m_3}$ for $ m_3 = m_1+m_2$. Furthermore, there exist non-negative linearization coefficients   $c_{(m_1,n_1),(m_2,n_2)}^{(m_3,n_3)}  \geq 0 $  for which 
\begin{align}
    \cQ^{k,m_1}_{n_1}(z)
\cQ^{k,m_2}_{n_2}(z)
= \sum_{ n_3 }
c_{(m_1,n_1),(m_2,n_2)}^{(m_3,n_3)} 
\cQ^{k,m_3}_{n_3} (z) ,
\label{eq:MultiplicationPositivity}
\end{align}
and the linearization coefficients  are non-zero only if 
\[
|n_1 - n_2| -(|m_1|+|m_2|) \leq n_3 \leq n_1+n_2+ \frac{|m_1|+|m_2| -|m_1+m_2|}{2}.
\]
\end{lemma}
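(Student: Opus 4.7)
The plan is to reduce the problem to one-variable Jacobi polynomial algebra and then appeal to a classical positivity theorem of Koornwinder for disk polynomial linearization. Writing $z = re^{i\theta}$ and $x = 2r^2-1$, one has
\begin{equation*}
\cQ^{k,m_1}_{n_1}(z)\,\cQ^{k,m_2}_{n_2}(z) = e^{im_3\theta}\, r^{|m_1|+|m_2|}\, P^{k,|m_1|}_{n_1}(x)\, P^{k,|m_2|}_{n_2}(x),
\end{equation*}
with $m_3 = m_1+m_2$. The key parity check is that $2s \bydef |m_1|+|m_2|-|m_3|$ is a non-negative even integer: non-negativity is the triangle inequality $|m_1+m_2|\leq |m_1|+|m_2|$, and evenness follows from $|m|\equiv m\pmod 2$, giving $|m_1|+|m_2|+|m_3|\equiv 2(m_1+m_2)\equiv 0\pmod 2$. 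Hence $r^{|m_1|+|m_2|} = r^{|m_3|}((x+1)/2)^s$, and the product rearranges as $e^{im_3\theta} r^{|m_3|}G(x)$ with $G$ a polynomial in $x$ of degree at most $s+n_1+n_2$. Expanding $G$ uniquely in the Jacobi basis $\{P^{k,|m_3|}_{n_3}\}_{n_3\geq 0}$ produces the identity \eqref{eq:MultiplicationPositivity}, shows that $*$ restricts to $V^{k,m_1}\times V^{k,m_2}\to V^{k,m_3}$, and delivers the upper bound $n_3\leq n_1+n_2+s$.

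For non-negativity of $c^{(m_3,n_3)}_{(m_1,n_1),(m_2,n_2)}$, I would extract this coefficient from \eqref{eq:MultiplicationPositivity} by taking the standard Jacobi inner product against $P^{k,|m_3|}_{n_3}$ with weight $(1-x)^k(1+x)^{|m_3|}$. Substituting $x = 2r^2-1$, $dx = 4r\,dr$, and using $r^{|m|}P^{k,|m|}_n(x)=Q^{k,m}_n(r)$, and then performing the angular integration (which contributes a factor of $2\pi$ precisely because $m_1+m_2-m_3=0$), converts this quantity into a strictly positive multiple of
\begin{equation*}
\int_{\bD} \cQ^{k,m_1}_{n_1}(z)\,\cQ^{k,m_2}_{n_2}(z)\,\overline{\cQ^{k,m_3}_{n_3}(z)}\,(1-|z|^2)^k\, dA(z).
\end{equation*}
Non-negativity of this triple disk polynomial integral is the classical positivity theorem of Koornwinder \cite{koornwinder1978positivity}, and this is the only non-elementary input; it is also the principal obstacle in the sense that without this deep result the inequality would be far from obvious.

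For the lower bound, I would exploit the orthogonality of $P^{k,|m_1|}_{n_1}$ against polynomials of degree $<n_1$ with respect to the weight $(1-x)^k(1+x)^{|m_1|}$. Because $m_1 = m_3-m_2$ gives $|m_1|\leq|m_2|+|m_3|$, and because the same parity argument forces $(|m_2|+|m_3|-|m_1|)/2$ to be a non-negative integer, I can split $(1+x)^{s+|m_3|} = (1+x)^{|m_1|}(1+x)^{(|m_2|+|m_3|-|m_1|)/2}$ so that the factor besides $P^{k,|m_1|}_{n_1}$ in the coefficient integral is a polynomial of degree $n_2+n_3+(|m_2|+|m_3|-|m_1|)/2$. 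Orthogonality then forces $c^{(m_3,n_3)}_{(m_1,n_1),(m_2,n_2)} = 0$ unless $n_3\geq n_1-n_2+(|m_1|-|m_2|-|m_3|)/2$, and the elementary bound $(|m_1|-|m_2|-|m_3|)/2\geq -(|m_1|+|m_2|)$ (which is equivalent to the triangle inequality $|m_3|\leq |m_1|+|m_2|$) weakens this to the stated $n_3\geq n_1-n_2-(|m_1|+|m_2|)$. The symmetric argument with $(m_1,n_1)$ and $(m_2,n_2)$ swapped yields the combined inequality $n_3\geq |n_1-n_2|-(|m_1|+|m_2|)$, completing the proof. The hard part is clearly the positivity step; the remaining reductions are bookkeeping of degrees and parities.
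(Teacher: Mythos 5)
Your proof is correct, and it reaches the same conclusion by a more self-contained route than the paper. The paper's proof consists of citing Corollary 5.2 of \cite{koornwinder1978positivity} wholesale --- that corollary already supplies the existence of the expansion, the non-negativity of the linearization coefficients, \emph{and} the support constraint $|2n_1+|m_1|-(2n_2+|m_2|)| \leq 2n_3+|m_3| \leq 2n_1+|m_1|+2n_2+|m_2|$ --- after which the paper only performs the index arithmetic to recast that constraint in the stated form. You instead derive the expansion and both degree bounds from first principles: the upper bound by the parity/degree count after writing $r^{|m_1|+|m_2|} = r^{|m_3|}((x+1)/2)^s$, and the lower bound by orthogonality of $P^{k,|m_1|}_{n_1}$ against lower-degree polynomials (your pre-weakening bound $n_3 \geq n_1-n_2+\tfrac{|m_1|-|m_2|-|m_3|}{2}$ is exactly one branch of Koornwinder's constraint, so the two routes agree quantitatively). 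The only non-elementary input you retain is Koornwinder's positivity of the triple product integral, which is indeed the irreducible core of the lemma and is the same source the paper leans on. Your version buys independence from the precise statement of Koornwinder's corollary at the cost of more bookkeeping; the paper's buys brevity. One cosmetic slip: the inequality $\tfrac{|m_1|-|m_2|-|m_3|}{2} \geq -(|m_1|+|m_2|)$ reduces to $3|m_1|+|m_2| \geq |m_3|$, which is \emph{implied by} (not equivalent to) the triangle inequality; the implication goes in the direction you need, so nothing breaks.
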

\begin{proof}
This essentially follows from  Corollary 5.2  in \cite{koornwinder1978positivity} which yields that the product of two Zernike polynomials may be written as:     
\begin{align}
    \cQ^{k,m_1}_{n_1}(z)
\cQ^{k,m_2}_{n_2}(z)
= \sum_{n_3}
c_{(m_1,n_1),(m_2,n_2)}^{(m_3,n_3)} 
\cQ^{k,m_3}_{n_3} (z), 
\end{align}
where $m_3 =  m_1+m_2 $ and $n_3$ is subject to the constraint:
\begin{align}
    \left|2 n_1 + |m_1| -(2n_2+|m_2|) \right| &\leq 2 n_3 + | m_3| \leq 2 n_1 +|m_1| + 2n_2 + |m_2|.  
\end{align}
This is equivalent to 
\begin{align}
     \left| n_1-n_2 +\frac{|m_1|-|m_2|}{2} \right|-\frac{|m_1+m_2|}{2} &\leq  n_3  \leq  n_1 +n_2 + \frac{|m_1|+|m_2|-|m_1+m_2|}{2},
\end{align}
from which, we obtain the desired result. 
\end{proof}

\begin{theorem}
\label{lem : banach algebra}
The spaces  $ V^k$ and $ \tilde{V}^k$ are  Banach algebras. 
\end{theorem}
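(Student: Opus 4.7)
The plan is to first observe that $\tilde V^k$ is, by construction, isometrically isomorphic to $V^k$, so it suffices to establish the Banach algebra property on the sequence side. The space $V^k$ is a weighted $\ell^1$-space with positive weights, hence is complete, so the nontrivial content is the submultiplicativity of the convolution product $*$. The strategy is to expand $(a*b)_{m_3,n_3}$ via the linearization coefficients from Lemma \ref{prop:BanachAlgebraRange}, apply the triangle inequality, and then reduce the problem to a pointwise bound on $\sum_{n_3} c^{(m_3,n_3)}_{(m_1,n_1),(m_2,n_2)} \langle m_3,n_3\rangle_{k,w}$ for each fixed pair $(m_1,n_1),(m_2,n_2)$.

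Concretely, using \eqref{eq:MultiplicationPositivity}, I would write
\begin{equation}
(a*b)_{m_3,n_3} = \sum_{m_1+m_2=m_3}\sum_{n_1,n_2} a_{m_1,n_1} b_{m_2,n_2} \, c^{(m_3,n_3)}_{(m_1,n_1),(m_2,n_2)},
\end{equation}
and then, using the non-negativity of the linearization coefficients,
\begin{equation}
\|a*b\|_{V^k} \;\leq\; \sum_{m_1,n_1,m_2,n_2} |a_{m_1,n_1}||b_{m_2,n_2}| \sum_{n_3} c^{(m_3,n_3)}_{(m_1,n_1),(m_2,n_2)} \,\langle m_3,n_3\rangle_{k,w}.
\end{equation}
The goal is then to prove the pointwise inequality
\begin{equation}
\sum_{n_3} c^{(m_3,n_3)}_{(m_1,n_1),(m_2,n_2)}\,\langle m_3,n_3\rangle_{k,w} \;\leq\; \langle m_1,n_1\rangle_{k,w}\,\langle m_2,n_2\rangle_{k,w}.
\end{equation}

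The key step for this inequality is to evaluate the linearization identity \eqref{eq:MultiplicationPositivity} at $z=1$ (that is, $r=1$, $\theta=0$). Since $\cQ^{k,m}_n(1) = Q^{k,m}_n(1) = P^{k,|m|}_n(1) = \binom{k+n}{n}$, one obtains the exact identity
\begin{equation}
\binom{k+n_1}{n_1}\binom{k+n_2}{n_2} \;=\; \sum_{n_3} c^{(m_3,n_3)}_{(m_1,n_1),(m_2,n_2)}\binom{k+n_3}{n_3}.
\end{equation}
This handles the binomial part of $\langle m_3,n_3\rangle_{k,w}$ exactly. For the admissible weight part, I would use the constraint on non-zero linearization coefficients from Lemma \ref{prop:BanachAlgebraRange}, namely $n_3 \leq n' \bydef n_1 + n_2 + \tfrac{|m_1|+|m_2|-|m_1+m_2|}{2}$, together with the fact that $w_{m,n}$ is non-decreasing in $n$ and submultiplicative, to conclude
\begin{equation}
w_{m_3,n_3} \;\leq\; w_{m_3,n'} \;\leq\; w_{m_1,n_1}\,w_{m_2,n_2},
\end{equation}
for every $n_3$ with a non-zero linearization coefficient. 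Substituting this into the earlier sum gives the pointwise inequality, and summing over $(m_1,n_1),(m_2,n_2)$ yields $\|a*b\|_{V^k} \leq \|a\|_{V^k}\|b\|_{V^k}$.

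I expect the only real obstacle to be verifying that $\cQ^{k,m}_n(1) = \binom{k+n}{n}$ in the chosen normalization and handling the identity at $z=1$ carefully when $m\neq 0$ (the phase $e^{im\theta}$ factors out cleanly along the ray $\theta=0$, so the identity really is a numerical identity between binomial coefficients). The completeness of $V^k$ and the transfer of the algebra property to $\tilde V^k$ via the isometric isomorphism $\cM$ are routine. Thus the proof reduces to combining the positivity of Koornwinder's linearization (already supplied by Lemma \ref{prop:BanachAlgebraRange}) with the two defining properties of admissible weights from Definition \ref{def:AdmissibleWeights}.
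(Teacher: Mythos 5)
Your proposal is correct and follows essentially the same route as the paper: it invokes the positivity and support constraints of Koornwinder's linearization coefficients (Lemma \ref{prop:BanachAlgebraRange}), evaluates the linearization identity at $z=1$ to obtain the exact binomial-coefficient identity, and combines the monotonicity and submultiplicativity of the admissible weights to get the pointwise bound, then sums. The only cosmetic difference is organizational --- the paper first proves the bound for products of two basis elements $\cQ^{k,m_1}_{n_1}\cQ^{k,m_2}_{n_2}$ and then extends by the triangle inequality, whereas you expand the convolution directly; the underlying estimate is identical.
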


\begin{proof}

We will first  show that 
\begin{align}
\label{eq:PairwiseProductBound}
\| \mathcal{Q}^{k,m_1}_{n_1} \mathcal{Q}^{k,m_2}_{n_2}  \|_{\tilde{V}^k} \leq  \| \mathcal{Q}^{k,m_1}_{n_1} \|_{\tilde{V}^k} \| \mathcal{Q}^{k,m_2}_{n_2}  \|_{\tilde{V}^k} 
\end{align}
for any $ m_1,m_2 \in \Z$ and 
 $n_1, n_2 \in \N$. 
 From Lemma \ref{prop:BanachAlgebraRange} it follows  that we may write the product of two Zernike polynomials as 
    \begin{equation} \label{eq:Zernike_property1}
        \mathcal{Q}^{k,m_1}_{n_1}(r,\theta) \mathcal{Q}^{k,m_2}_{n_2}(r,\theta) = 
e^{i(m_1+m_2)\theta}        \sum_{n_3=0}^{n'} c_{(m_1,n_1),(m_2,n_2)}^{(m_3,n_3)} Q^{k,m_1+m_2}_{n_3}(r )
    \end{equation}
    for $n' = n_1+n_2+ \frac{|m_1|+|m_2| -|m_1+m_2|}{2}$ and  real linearization coefficients $c_{(m_1,n_1),(m_2,n_2)}^{(m_3,n_3)} $.  
    Note that as $
Q^{k,m}_n(1) =\begin{pmatrix}
			k+n\\
			n
		\end{pmatrix}
    $ and the coefficients are non-negative, letting $(r,\theta)=(1,0)$ in \eqref{eq:Zernike_property1} above,
    it follows that:  
    \begin{align}\label{eq : property on the cj's coefficients}
    \begin{pmatrix}
			k+n_1\\
			n_1
		\end{pmatrix}
  \begin{pmatrix}
			k+n_2\\
			n_2
		\end{pmatrix}
  &=
        \sum_{n_3 =0}^{n'} 
        \left|
        c_{(m_1,n_1),(m_2,n_2)}^{(m_3,n_3)} 
        \right|
        \begin{pmatrix}
			k+n_3\\
			n_3
		\end{pmatrix}.
    \end{align} 
    Evaluating the norm of the product $\mathcal{Q}^{k,m_1}_{n_1}\mathcal{Q}^{k,m_2}_{n_2}$  and applying  H\"older's inequality  we obtain: 
    \begin{align*}\label{eq : product of two elements}
        \| \mathcal{Q}^{k,m_1}_{n_1}\mathcal{Q}^{k,m_2}_{n_2}\|_{\tilde{V}^k}
        &= \sum_{n_3=0}^{n'} 
        \left| c_{(m_1,n_1),(m_2,n_2)}^{(m_3,n_3)}   \right|
 w_{m_3,n_3}
		\begin{pmatrix}
			k+n_3\\
			n_3
		\end{pmatrix} \\
  &\leq \left(  \sup_{0\leq n_3 \leq n' } w_{m_3,n_3} \right) \left( 
        \sum_{n_3 =0}^{n'} \left| c_{(m_1,n_1),(m_2,n_2)}^{(m_3,n_3)}   \right|
        \begin{pmatrix}
			k+n_3\\
			n_3
		\end{pmatrix} \right) 
  \\
  &=
   w_{m_3,n'} \begin{pmatrix}
			k+n_1\\
			n_1
		\end{pmatrix}
  \begin{pmatrix}
			k+n_2\\
			n_2
		\end{pmatrix}, 
    \end{align*} 
    where the last equality follows from \eqref{eq : property on the cj's coefficients} and the $
    w_{m,n} \leq w_{m,n+1}$ property of the weights. 
    Furthermore as $m_3 = m_1 + m_2$ and the weights are  submultiplicative   $w_{m_1+m_2,n'} \leq w_{m_1,n_1} w_{m_2,n_2}$, we obtain our first result: 
\begin{align}
    \| \mathcal{Q}^{k,m_1}_{n_1}\mathcal{Q}^{k,m_2}_{n_2}\|_{\tilde{V}^k}  
  &\leq  w_{m_1,n_1} w_{m_2,n_2}
   \begin{pmatrix}
			k+n_1\\
			n_1
		\end{pmatrix}
  \begin{pmatrix}
			k+n_2\\
			n_2
		\end{pmatrix} 
  =  \| \mathcal{Q}^{k,m_1}_{n_1} \|_{\tilde{V}^k}  
  \| \mathcal{Q}^{k,m_2}_{n_2}\|_{\tilde{V}^k}  .
\end{align}

To prove the general result, fix functions $u,v \in \tilde{V}^k$ with coefficients $ a,b \in V^k$.  We compute the norm of their product using our bound in \eqref{eq:PairwiseProductBound}. 
\begin{align*}
        \left\| \left( \sum_{m_1,n_1} a_{m_1,n_1} \cQ^{k,m_1}_{n_1} \right) \left( \sum_{m_2,n_2} b_{m_2,n_2} \cQ^{k,m_2}_{n_2} \right) \right\|_{\tilde{V}^k}
   & \leq 
  \sum_{m_1,m_2,n_1,n_2} 
  \left| a_{m_1,n_1}  b_{m_2,n_2} \right| \|  \cQ^{k,m_1}_{n_1} \cQ^{k,m_2}_{n_2} \|_{\tilde{V}^k} \\
  &\leq 
  \sum_{m_1,m_2,n_1,n_2} \left| a_{m_1,n_1}  \right| \|  \cQ^{k,m_1}_{n_1} \|_{\tilde{V}^k}
  \, \left| b_{m_2,n_2}\right| 
  \|  \cQ^{k,m_2}_{n_2} \|_{\tilde{V}^k} \\
  &\leq  \left\| \sum_{m_1,n_1} a_{m_1,n_1} \cQ^{k,m_1}_{n_1} \right\|_{\tilde{V}^k} \left\| \sum_{m_2,n_2} b_{m_2,n_2} \cQ^{k,m_2}_{n_2} \right\|_{\tilde{V}^k}.
\end{align*}
We have thus shown that $ \tilde{V}^k$ is a Banach algebra. As $V^k$ and $ \tilde{V}^k$ are isometrically isomorphic, then $ V^k$ is also a Banach algebra. 
\end{proof}


\subsection{ The MMT operator for Zernike polynomials and evaluating nonlinearities}
\label{sec:MMTforDiskPolynomials1}

In this section we wish to provide a general framework for the product of Zernike series using the MMT approach, which is  essentially the example of evaluating polynomial nonlinearity $\cG$ defined in \eqref{eq:G(f)}.
Since the Zernike polynomials are defined through the orthogonal Jacobi polynomials, we can make use of the theory from the previous sections. There are a couple difficulties. First, each sequence of Zernike polynomials $ \{ \cQ^{k,m}_n\}_{n \in \N}$ uses a different set of interpolation points for each pair $(k,m)$. Second, the function space spanned by the first $N$ polynomials $ \{ \cQ^{k,m}_n\}_{n=0}^{N}$ will have polynomials of degree $ 2N+m$, hence the off the shelf theorems from before cannot be applied. The first issue  is not a big problem if we are dealing with axisymmetric problems, a common simplifying assumption for  PDEs. The second problem is easily dealt with using a  change of variables. 

Consider finite sequences $ a \in V^{k,m_1} $ and $ b \in V^{k,m_2}$ such that $ a_{m_1,n_1}=0$ and $b_{m_2,n_2}=0$ for all $ n_1 >N_1$ and $n_2 > N_2$.  
Suppose we are interested in computing the discrete convolution $ c = a*b \in V^{k,m_3}$ for $m_3 = m_1 + m_2$, corresponding to the product: 
\begin{align}
  \sum_{0 \leq n_3 \leq N'} c_{n_3} Q^{k,m_3}_{n_3}(r)  &=
  \left( \sum_{0\leq n_1 \leq N_1} a_{n_1} Q^{k,m_1}_{n_1} (r) \right) \left( \sum_{0\leq n_2 \leq N_2} b_{n_2} Q^{k,m_2}_{n_2} (r) \right),
\end{align}
where $ N' = N_1 +N_2 +  \frac{|m_1|+|m_2| -|m_1+m_2|}{2}$. 
Recall our definition  of 
$ Q_n^{k,m}(r) = r^{|m|} P_n^{k,|m|}(x)$, where $ x = 2 r^2 -1$ for $ x \in [-1,1]$. 
Grouping the leading powers of $ r$, we obtain:

\begin{align}\label{eq : identity product}
  \sum_{0 \leq n_3 \leq N'} c_{n_3} P_{n_3}^{k,|m_3|}(x)
  &= 
     r^{|m_1|+|m_2|-|m_1+m_2|} 
     \left( \sum_{0\leq n_1 \leq N_1} a_{n_1} P_{n_1}^{k,|m_1|}(x) \right) 
     \left( \sum_{0\leq n_2 \leq N_2} b_{n_2} P_{n_2}^{k,|m_2|}(x) \right). 
\end{align}

Note that $|m_1|+|m_2|-|m_1+m_2|$ is  $0$ if $m_1$ and $m_2$ are of the same sign, otherwise  it is an even integer. More precisely:
\[
\bar{m} \bydef
\frac{|m_1|+|m_2|-|m_1+m_2|}{2} =
\begin{cases}
    0 & \mbox{if } m_1 m_2 \geq 0 \\
     \min \{ |m_1|,|m_2|\} 
    & \mbox{otherwise.}
\end{cases}
\]
Thereby, we can write the leading power of  $r$ as a polynomial in $x$, thus obtaining: 
\begin{align}
\label{eq:Coefficients_Of_A_Product}
  \sum_{0 \leq n_3 \leq N'} c_{n_3} P_{n_3}^{k,|m_3|}(x)
  &= \left(\frac{x+1}{2}\right)^{\bar{m} }
  \left( \sum_{0\leq n_1 \leq N_1} a_{n_1} P_{n_1}^{k,|m_1|}(x) \right) 
  \left( \sum_{0\leq n_2 \leq N_2} b_{n_2} P_{n_2}^{k,|m_2|}(x) \right). 
\end{align}

At this point, we are able to apply the standard interpolation methods to compute the coefficients $(c_{n_3})$ using the MMT and iMMT introduced in Section \ref{sec:orthogonal_polynomials_and_MMT}. 
The paradigm for computing the coefficients $ c_{n_3}$,  as summarized in Figure \ref{fig:CommDiagram}, is to  evaluate the series of $a$ and $b$ converting them into grid-space, perform the multiplication on grid-space, and then convert back to coefficient-space. 

As we are interested in multiplying series of Jacobi polynomials of potentiality different  weights, on the nodes associated with  Jacobi polynomials of potentiality different weights, we may need to use multiple MMT matricies, see Definition \ref{def:MMT}. Let us fix notation below:

\begin{definition}
For a fixed $ N'\in \N$, define  $ M^{(k,m)}_{(k',m')}$ to be the MMT matrix  evaluating the Jacobi polynomials $\{P^{k,m}_n\}_{n=0}^{N'}$  on the nodes  $ \{x_i'\}_{i=0}^{N'}$,  given by  the zeros of the Jacobi polynomial $P^{k',m'}_{N'+1}$. \end{definition}

Note that if $ (k,m)=(k',m')$, then the entries of  the iMMT matrix  $[ M^{(k,m)}_{(k',m')}]^{-1}$ are given by \eqref{eq:iMMT_matrix} given from Theorem \ref{thm:Interpolation}.

The product in \eqref{eq:Coefficients_Of_A_Product} is guaranteed to be a polynomial of degree $N'$, so we may exactly compute its coefficients with quadrature on $N'$ points.  
Then we define the grid-space vectors $ \vec{f}_0 , \vec{f}_1,\vec{f}_2 \in \R^{N'+1}$ as:   
\begin{align}
    (\vec{f}_0)_i &= \left(\frac{x_i+1}{2}\right)^{\bar{m} },&
    (\vec{f}_1)_i&= \sum_{0\leq n_1 \leq N_1} a_{n_1} P_{n_1}^{k,|m_1|}(x_i),&
    (\vec{f}_2)_i&= \sum_{0\leq n_2 \leq N_2} b_{n_2} P_{n_2}^{k,|m_2|}(x_i).
\end{align}
For $\vec{f}_1 $ and $\vec{f}_2 $, this  may also be expressed as a matrix-vector product with the MMT matrix: 
\begin{align}
\vec{f}_1 &= M^{(k,m_1)}_{(k, m_3)} 
\iota^{N'}
\vec{a} ,
&
\vec{f}_2 &= M^{(k,m_2)}_{(k, m_3)} 
\iota^{N'}
\vec{b}, 
\end{align}
for the zero section  $\iota^{N'}: \R^{N+1} \to \R^{N+1} \times \R^{N'-N}$. 
Inherited from the multiplications of functions, we may define a componentwise multiplication product (also called a Hadamard product) $ \odot : \R^{N'} \times \R^{N'}  \to \R^{N'}$ for vectors $ x,y \in \R^{N'}$ in grid space by 
\[
\left( x_1, \dots , x_{N'} \right) \odot 
\left( y_1, \dots ,y_{N'} \right) 
\bydef
\left( x_1 y_1, \dots , x_{N'}  y_{N'}  \right) 
\]
We may compute the output coefficients as 
\begin{align}
    \vec{c} &= [M^{(k,m_3)}_{(k, m_3)} ]^{-1} 
    \left(
    \vec{f}_0  \odot \vec{f}_1  \odot \vec{f}_2 
    \right) 
\end{align}

Overall, we are able to exactly compute the coefficients in \eqref{eq:Coefficients_Of_A_Product}. While there may be rounding error from floating point arithmetic, the effects of this may be bounded using interval arithmetic.

\begin{remark}
If $m_1 m_2 \ge 0$ and $f=f_1=f_2$, then we get that $f_0(x)f_1(x)f_2(x) = (f(x))^2$. This corresponds exactly to the case in Section \ref{sec:Introduction} with $\cG(u)=u^2$. Furthermore, when considering more general (non-polynomial) nonlinearities, aliasing for the Zernike coefficients must be addressed. In particular, handling fractional powers of $r^{|m|}$ for the radial component is not straightforward. We leave this issue for future work.
\end{remark}

We also note that this method of multiplication can be modified to handle functions with different $k$, which is necessary in settings such as the axisymmetric Navier--Stokes equations. In such cases, different values of  $k$  are required. Nevertheless, the method works well if $k_1$, $k_2$, $k_3$ satisfy $ k_3 = \max \{ k_1, k_2\}$ .

Finally, it is also worth mentioning that an alternative method of computing products \eqref{eq:Coefficients_Of_A_Product} may be performed by first converting all of the coefficients to the same set of basis functions, and then computing the product with the MMT. More precisely, suppose now that $a$ is a sequence in the $(P^{k,m_1}_n)_{n \in \mathbb{N}}$ basis, $b$ in the $(P^{k,m_2}_n)_{n \in \mathbb{N}}$ basis and $c$ is the $(P^{k,m_3}_n)_{n \in \mathbb{N}}$ basis where $m_3 = m_1 + m_2$. Under the assumption that $m_1,m_2 \geq 0$, 
we convert both sequences $a$ and $b$ into the basis $(P^{k,m_3}_n)_{n \in \mathbb{N}}$. This can  be achieved through the application of a banded matrix, constructed using to the following relation (see Chapter 18.9 in \cite{nist_handbook})
\[
(2n + k+m+1) P_{n}^{k,m} = (n+k+m+1)P^{k,m+1}_n + (n+k)P^{k,m+1}_{n-1}.
\]
Then, one can evaluate the functions associated to $a$ and $b$ on the $(P^{k,m_3}_n)_{n \in \mathbb{N}}$ grid points using the MMT, $M^{(k,m_3)}_{(k,m_3)}$. After performing a pointwise product, the result on the grid is transformed back to coefficients using the iMMT,  $[M^{(k,m_3)}_{(k,m_3)}]^{-1}$, computed using the formula from Theorem \ref{thm:Interpolation}. 
Indeed, this multiplication method is the one applied in our PDE applications discussed in Section~\ref{sec:PDE}.


\subsection{Pseudo-differential Operators} \label{sec:PseudoDiff}

Before proceeding, we take a moment to clarify the motivation for introducing pseudo-differential operators in this section. These operators are essential for extending the current framework to more complex problems, such as the axisymmetric Navier–Stokes equations or other PDEs on nontrivial geometries. By establishing their properties and interconnections here, we provide a foundation for future research that will build upon the methods and ideas presented. While some of  these operators may seem peripheral to the immediate results of this work, they are critical for appreciating the broader applicability and potential of the techniques discussed. This preparatory step ensures a solid theoretical foundation for applications requiring more advanced analysis.

A central feature of the Zernike polynomials contributing to their effectiveness is that the natural differential operators on the disk yield sparse operators in coefficient space which are easy to invert 
\cite{burns2020dedalus,vasil2016tensor}. 
The  caveat here is that the nice representation of differential operators is only possible if we are willing to change function bases. 
This again is no different than the case with Chebyshev. 
Below we review common linear operators on the spaces of Zernike polynomials from \cite{vasil2016tensor}, having appropriately modified the formulas to account for  the difference in our normalization of basis elements. In particular, we derive such formulas considering $m \geq 0$, since the cases $m<0$ can be derived using \eqref{eq : complex conjugate of jacobi}.

\begin{figure}[h]\centering
\begin{tikzcd}
{(k+1,m-1)} && {(k+1,m)}          && {(k+1,m+1)} \\ \\
{(k,m-1)}   && {(k,m)} \arrow[rr, "R^+"] \arrow[uu, "C" description] \arrow[rruu, "D^+"] \arrow[lluu, "D^-"'] \arrow[ll, "R^-"'] && {(k,m+1)}  
\end{tikzcd}
	\caption{ Various ladder operators on Zernike polynomials, figure emulated from \cite{vasil2016tensor}.}\label{fig:LadderOperators}
\end{figure}
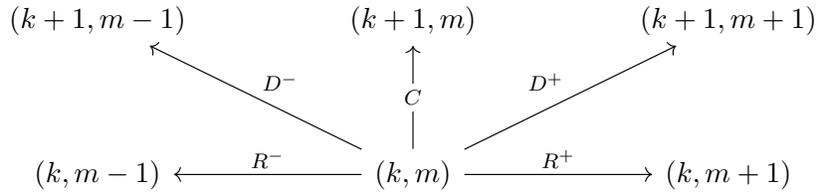

On the two-dimensional disk, there are several common coordinates systems,  such as Cartesian $(x,y)$, polar-radial $(r,\theta)$, or complex $(z,\bar{z})$. 
The Laplacian  on the disk may be expressed in any of these coordinate systems:
\[
\triangle = \partial_x^2 + \partial_y^2 = \partial_r^2 + \tfrac{1}{r} \partial_r + \tfrac{1}{r^2} \partial_\theta^2 = 4 \partial_z \partial_{\bar{z}}
\]
For manipulating the Zernike polynomials, the differential operators $\partial_z $ and $\partial_{\bar{z}}$ have the most natural expression. 
For a motivating example, note that for a complex function $F(r,\theta) = e^{im\theta}f(r)$ with the single azimuthal wave number $m$, then  one may compute its $\partial_z$ and $\partial_{\bar{z}}$ derivatives,  given by 
\begin{align}\label{eq : application of D+ D- on general function}
\nonumber
    \partial_z F(r,\theta) &=\tfrac{1}{2} e^{i(m-1)\theta}\left(\partial_r + \frac{m}{r}\right)f(r)\\
    \partial_{\bar{z}} F(r,\theta) &= \tfrac{1}{2}e^{i(m+1)\theta}\left(\partial_r - \frac{m}{r}\right)f(r).
\end{align}

Notice in particular that the azimuthal wave number is decreased by $\partial_z$, and increased by $  \partial_{\bar{z}} $. 
Thus motivated, we define the differential operators 
\begin{align*}
    D^- &\bydef 2\partial_z,
    &
    D^+ &\bydef 2\partial_{\bar{z}}.
\end{align*}
One may see that these operators commute, and moreover the Laplacian is given by $\triangle = D^+D^- = D^-D^+$. The action of these operators on the Zernike polynomials is in fact easily expressed: each basis element is sent to a scalar multiple of another basis element. 
Specifically, given $(k,m)$, we define 
$D^+_{k,m}:V^{k,m} \to V^{k+1,m+1}$ (respectively $  D^-_{k,m}:V^{k,m} \to V^{k+1,m-1}$) as the restriction of $D^+$ to  $V^{k,m} \to V^{k+1,m+1}$ (respectively of $D^-$ to  $V^{k,m} \to V^{k+1,m-1}$) and obtain the following formulas (see \cite{vasil2016tensor}):
\begin{align*}
    D^+_{k,m} \mathcal{Q}^{k,m}_n &= 2(n+k+m+1)\mathcal{Q}^{k+1,m+1}_{n-1} & \mbox{if }  m&\geq 0, \\
    D^-_{k,m} \mathcal{Q}^{k,m}_n &= 2(n+m)\mathcal{Q}^{k+1,m-1}_n 
  & \mbox{if }  m &>0,
\end{align*}
with the convention that $\mathcal{Q}^{k,m}_{-1} =0$. 
The formulae for $m\leq 0$ may be  easily derived by taking the complex conjugate, see   \eqref{eq : complex conjugate of jacobi}. 
Furthermore, we  deduce the action of the Laplacian $\triangle = D^+D^-$ on Zernike polynomials: 
\begin{align}\label{eq : definition laplacian jacobi basis}
    \triangle \mathcal{Q}^{k,m}_n = 4(n+m)(n+k+m+1)\mathcal{Q}^{k+2,m}_{n-1} \bydef \lambda^{k,m}_n \mathcal{Q}^{k+2,m}_{n-1}. 
\end{align}

Moreover, when combined with Dirichlet boundary condition, we obtain the following inverse for the Laplacian.

\begin{proposition}\label{prop : inverse of dirichlet laplacian}[See \cite{janssen2014zernike}]
Let $m \in \mathbb{N}_0$ and let $\triangle^{-1}_\D$ denote the inverse Laplacian with Dirichlet boundary conditions on the disk. Then, 
{\footnotesize
\begin{align}\label{def : inverse laplacian dirichlet}
\triangle^{-1}_\D \cQ_n^{0,m}  &=  
\begin{dcases}
	\frac{\cQ_{1}^{0,m}}{4(m+1)(m+2)} 
	-
	\frac{\cQ_{0}^{0,m}}{4(m+1)(m+2)} 
&	 \mbox{if } n =0
	\\
	\frac{ \cQ_{n+1}^{0,m}}{4(2n+m+1)(2n+m+2)}
	-
	\frac{ \cQ_{n}^{0,m}}{2(2n+m+2)(2n+m)}
	+
	\frac{\cQ_{n-1}^{0,m}}{4(2n+m)(2n+m+1)} 
	& \mbox{if } n \geq 1
\end{dcases}
\end{align}
}
\end{proposition}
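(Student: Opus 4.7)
The plan is to proceed by direct verification combined with uniqueness. The Dirichlet Laplacian is a bijection $H^2(\bD)\cap H^1_0(\bD) \to L^2(\bD)$, so it suffices to check that the right-hand side $u$ of \eqref{def : inverse laplacian dirichlet} satisfies (i) $u|_{\partial \bD} = 0$, and (ii) $\triangle u = \cQ_n^{0,m}$.

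For (i), observe that $\cQ_j^{0,m}(e^{i\theta}) = e^{im\theta}P_j^{0,m}(1) = e^{im\theta}$ for every $j\geq 0$, since $P_j^{0,m}(1)=\binom{j}{j}=1$. Hence the boundary condition collapses to checking that the scalar coefficients appearing in the right-hand side of \eqref{def : inverse laplacian dirichlet} sum to zero. This is an elementary identity: the $n=0$ case is immediate, and for $n\geq 1$, placing the three fractions over the common denominator $4(2n+m)(2n+m+1)(2n+m+2)$ yields the numerator $(2n+m) - 2(2n+m+1) + (2n+m+2)=0$.

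For (ii), we invoke \eqref{eq : definition laplacian jacobi basis}, namely $\triangle \cQ_j^{0,m}= 4(j+m)(j+m+1)\cQ_{j-1}^{2,m}$ with the convention $\cQ_{-1}^{2,m}=0$. Applying $\triangle$ term by term to the right-hand side of \eqref{def : inverse laplacian dirichlet} produces a linear combination of at most three Zernike polynomials $\cQ_{n}^{2,m}, \cQ_{n-1}^{2,m}, \cQ_{n-2}^{2,m}$, and it remains to prove that this combination equals $\cQ_n^{0,m}$. This is precisely the basis-conversion statement between the $(0,m)$ and $(2,m)$ Jacobi bases. It can be established by applying the standard Jacobi identity
\[
(2n+k+m)P_n^{k-1,m}(x) = (n+k+m)P_n^{k,m}(x) - (n+m)P_{n-1}^{k,m}(x)
\]
first with $k=1$, and then with $k=2$ to each of the resulting terms. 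Multiplying through by $r^m e^{im\theta}$ yields the desired identity for $\cQ_n^{0,m}$, after which matching the rational coefficients on both sides is a routine algebraic verification.

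The main obstacle is the bookkeeping associated with this iterated basis conversion: the rational expressions in $n$ and $m$ must simplify exactly to those prescribed by \eqref{def : inverse laplacian dirichlet}. An alternative and perhaps cleaner route would be to separate variables via $u(r,\theta) = e^{im\theta}g(r)$, seek $g$ as an ansatz $\alpha Q_{n+1}^{0,m}(r) + \beta Q_n^{0,m}(r) + \gamma Q_{n-1}^{0,m}(r)$, and determine $(\alpha,\beta,\gamma)$ by inserting this into the radial ODE $(\partial_r^2 + r^{-1}\partial_r - m^2 r^{-2})g = Q_n^{0,m}$; the boundary condition at $r=1$ then reduces to $\alpha+\beta+\gamma=0$, and the three unknowns are fixed by the three coefficient matches, with the fourth relation being automatically consistent by the uniqueness of the Dirichlet inverse.
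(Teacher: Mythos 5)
Your proposal is correct. The paper itself gives no proof of this proposition --- it only cites \cite{janssen2014zernike} --- so your direct verification is a genuine addition rather than a restatement. The two ingredients you use are exactly the ones already available in the paper: the boundary check reduces, via $Q^{0,m}_j(1)=\binom{j}{j}=1$, to the telescoping identity $(2n+m)-2(2n+m+1)+(2n+m+2)=0$, and the identity $\triangle u=\cQ^{0,m}_n$ follows by applying \eqref{eq : definition laplacian jacobi basis} termwise and comparing with the double conversion $\cQ^{0,m}_n = C_{1,m}C_{0,m}\cQ^{0,m}_n$ obtained by iterating \eqref{eq:ConversionOp} (equivalently, your quoted Jacobi identity, which is the same formula). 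I checked the three coefficient matches: the $\cQ^{2,m}_n$ coefficient is $\tfrac{(n+m+1)(n+m+2)}{(2n+m+1)(2n+m+2)}$ on both sides, the $\cQ^{2,m}_{n-1}$ coefficient combines to $-\tfrac{2(n+m)(n+m+1)}{(2n+m)(2n+m+2)}$ on both sides, and the $\cQ^{2,m}_{n-2}$ coefficient is $\tfrac{(n+m)(n+m-1)}{(2n+m)(2n+m+1)}$ on both sides (with the convention $\cQ^{2,m}_{-1}=0$ handling $n=0,1$). Since the right-hand side is a polynomial in $z,\bar z$ lying in $H^2(\bD)\cap H^1_0(\bD)$ and the Dirichlet problem has a unique solution there, the conclusion follows. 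The only cosmetic point is that your alternative radial-ODE route at the end is redundant once the first argument is complete; the self-contained verification via \eqref{eq:ConversionOp} is preferable here because it keeps the proof internal to the paper's own operator calculus.
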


As underlined by \eqref{eq : definition laplacian jacobi basis}, the expression of derivatives has a simple expression if we allow ourselves to change basis functions from $ V^{k}$ to $V^{k+p}$, where $p$ is the degree of differentiation. 
However, when considering a zero-finding problem combining linear differential operators and nonlinearities (cf. Section \ref{sec:PDE}), one often need to change basis in order for the image of the zero-finding problem to be expressed in the same basis.  
For this, one may define a  conversion operator $C : V^{k} \to V^{k+1}$  which converts the $V^k$-coefficients of a function into the $V^{k+1}$-coefficients. In particular,  the restriction $C_{k,m} : V^{k,m} \to V^{k+1,m}$ of $C$ to the subspace $V^{k,m}$ is given as follows (see \cite{vasil2016tensor})
\begin{align} \label{eq:ConversionOp}
	C_{k,m} \cQ_n^{k,m}  &=  
	\frac{n+k+m+1}{2n+k+m+1} \cQ_{n}^{k+1,m} - \frac{n+m}{2n+k+m+1} \cQ_{n-1}^{k+1,m}.
\end{align}
For instance, the operator $\Delta - I : V^{k,m} \to V^{k+2,m}$ restricted to $V^{k,m}$ writes $D^+_{k+1,m-1}D^{-}_{k,m} -  C_{k+1,m}C_{k,m}$ and can be expressed explicitly using the formulas given above.

Following our objective to treat PDEs on the disk, we now introduce the multiplication operators by $z$ and by $\bar{z}$, which appear naturally in polar coordinates. Specifically, we define the operators $R^+$ and $R^-$ as such multiplication operators : 
\begin{align}\label{def : definition of R plus R minus on general function}
R^+ f(z,\bar{z}) &\bydef z f(z,\bar{z}) , &
R^- f(z,\bar{z}) &\bydef \bar{z} f(z,\bar{z}). 
\end{align}
The above operators have an explicit representation when expressed in the Zernike basis. Indeed, given $(k,m)$, we define 
$R^+_{k,m}:V^{k,m}\to V^{k,m+1}$ (respectively $ R^-_{k,m}:V^{k,m}\to V^{k,m-1}$) as the restriction of $R^+$ to  $V^{k,m}\to V^{k,m+1}$ (respectively of $R^-$ to  $V^{k,m}\to V^{k,m-1}$) and obtain the following formulas (see \cite{vasil2016tensor}):
\begin{equation}\label{def : definition of R plus and R minus on jacobi}
\begin{aligned}
	R^+_{k,m} \cQ_n^{k,m}  =  
	\frac{n+k+|m|+1}{2n+k+|m|+1} \cQ_{n}^{k,m+1} + \frac{n+k}{2n+k+|m|+1} \cQ_{n-1}^{k,m+1},  
  \quad \mbox{if } & m\geq 0. \\
	R^-_{k,m} \cQ_n^{k,m}  = 
	\frac{n+1}{2n+k+|m|+1} \cQ_{n+1}^{k,m-1} +
	\frac{n+|m|}{2n+k+|m|+1} \cQ_n^{k,m-1},
  \quad  \mbox{if } & m >0.
\end{aligned}
\end{equation}
Again, the formulae for $m\leq 0$ may be  derived by taking the complex conjugate, see   \eqref{eq : complex conjugate of jacobi}. 
We summarize how the operators $ D^\pm$, $R^\pm$, $C$ all operate between the spaces $ V^{k,m}$  in Figure \ref{fig:LadderOperators}. Having recalled explicit formulas for the different linear operators appearing in the study of PDEs on the disk, we are in a position to investigate specific PDE applications.  

\section{Applications to elliptic semilinear PDEs on the disk} \label{sec:PDE}
 
The framework established in Sections~\ref{sec:orthogonal_polynomials_and_MMT},~\ref{sec:validated_numerics_comparisons}~and~\ref{sec:Zernike} allows the study of elliptic semilinear PDEs on the disk. 
In this section, we leverage these results to produce computer-assisted proofs of the existence of solutions to \eqref{eq : zero finding original} and \eqref{eq : zero finding original 1/z}.

Using the notation established in Section \ref{sec:PseudoDiff}  we may write the PDEs in  \eqref{eq : zero finding original} and \eqref{eq : zero finding original 1/z} for a complex function $ v: \bD \to \C$ as 
\begin{align}\label{eq:LadderEqs}
\begin{cases}
    \triangle v + (R^+)^{-1} v^2 =0,  
    &(\mbox{Case }m=-1)\\
    %
    \triangle v + (R^-)^m v^2 =0, 
    &(\mbox{Case }m\in\N )
\end{cases}
\end{align}
both subject to the Dirichlet boundary conditions $v|_{\partial \bD} =0$.  
Recall from their definition in \eqref{def : definition of R plus R minus on general function} that $ R^+$ and $ R^{-}$ correspond to multiplications by $ z$ and  $ \bar{z}$  respectively. 

In general, the solutions of these PDEs may be written as series in Zernike polynomials $$ v(r,\theta) = \sum  v_{m,n} \cQ_n^{k,m}(r,\theta) = \sum  v_{m,n} e^{i m \theta} Q_n^{k,m}(r) .$$ 
The PDEs do not exactly possess gauge symmetry (i.e. $f( e^{i\theta} v ) = e^{i \theta} f(v)$, for all $\theta$ and $ v\in\C$). 
However, for the $m^{th}$ PDE in the family we are able to make a ``generalized axisymmetric'' assumption  $v(r,\theta) = e^{i m \theta} u(r)$, where $u : [0,1] \to \mathbb{R}$.   Using this ansatz, the equation reduces to an ODE in $r$:
\begin{align}\label{eq : equation in r for u for r to the m}
    \left(
 \partial_r^2 + \tfrac{1}{r} \partial_r - \tfrac{m^2}{r^2} \right)u(r)+r^m u(r)^2 &=0 & r \in [0,1]
\end{align}
paired with the boundary condition $ u(1)=1$. Similarly, the ansatz $v(z) = e^{i \theta} u(r)$ allows to transform \eqref{eq : zero finding original 1/z} into 
\begin{align}\label{eq : equation in r for u for 1/r}
    \left(
 \partial_r^2 + \tfrac{1}{r} \partial_r - \tfrac{1}{r^2} \right)u(r)+ \frac{1}{r}u(r)^2 &=0 & r \in [0,1]
\end{align}
paired with the boundary condition $ u(1)=0$ and the pole condition $u(0) =0.$
This  reduces the problem to looking for a real-valued  solution $u$ to \eqref{eq : equation in r for u for r to the m} and \eqref{eq : equation in r for u for 1/r}  written as a singly indexed series:  
\begin{align}\label{ansatz for u r to the m}
    u(r) =  \sum_{n \in \mathbb{N}_0} u_n Q^{0,|m|}_n(r),
 \end{align}
 where $ u_{n}=v_{|m|,n} \in \mathbb{R}$.  
Note that as $ Q_n^{0,|m|}=r^{|m|}  P_n^{0,|m|}(2r^2-1)$, the pole condition $u^{(k)}(0) = 0$ is satisfied for all $k < |m|$. 

\subsection{Formulation as a \boldmath$F(x)=0$\unboldmath~problem}

In order to apply Theorem \ref{thm::RadiiNonLin}, we will explicitly define a functional equation $ F_m:X \to X'$ whose zeros correspond to solutions of   \eqref{eq:LadderEqs}. 
Resulting from our generalized axisymmetric assumption we fix $X =X'\bydef V^{0,|m|}$, as defined  in \eqref{def : definition of Vkm}, to be the sequence space of Zernike polynomials with fixed wave number  $|m|$.  
The norm on this space is inherited from $V^0$ given in Definition \ref{def:WeightedNorms}, and in this paper we will work with trivial weights (i.e. $\langle m,n\rangle=1$ for all $m,n$). 

 While formally $V^{0,|m|}\subseteq V^0$ consists of doubly indexed sequences $U = (U_{m,n})_{m \in \mathbb{Z}, n \in \mathbb{N}_0}$ where $U_{m',n}=0 $ whenever $m'\neq m$, we may readily  identified the elements with singly indexed  sequences $\tilde{U} \bydef  (U_{m,n})_{n \in \mathbb{N}_0}$. 
In particular, since $k=0$, we might think about $V^{0,|m|}$ as the usual Lebesgue space for sequences $\ell^1(\mathbb{N}_0)$. In order to keep track of the index $m$, we will keep the notation $V^{0,|m|}$ but use the usual notation $\|\cdot\|_1$ for the $1$-norm associated to $V^{0,|m|}$.  
Additionally, for bounded linear operators $ B(V^{0,|m|},V^{0,|m|})$ we will similarly use the notation $ \| \cdot \|_1 $.

Using the notation of Section \ref{sec:PseudoDiff}, we may encode the PDE in \eqref{eq:LadderEqs} as a functional equation in the space of Zernike coefficients.  
One approach to this would be to directly encode the equation; the disk Laplacian $\triangle = D^+ D^-$ has a simple expression as an operator from $ V^{k} $ to $ V^{k+2}$, and an additional equation could be appended to enforce the boundary condition $ 0=u(1) = \sum_{n \in \N} u_n$.  
However it is easier to work with the inverse Dirichlet Laplacian $ \triangle_0^{-1}$, whose action on Zernike polynomials is explicitly given in  
Proposition \ref{prop : inverse of dirichlet laplacian}.

\begin{definition}
    Fix the quadratic nonlinearity $ G:V^0 \to V^0$ as $ G(U)=U*U$ and the inverse Laplacian $ \triangle_0^{-1}$ as in Proposition \ref{prop : inverse of dirichlet laplacian}. We define the family of functions $ F_m : V^{0,|m|} \to V^{0,|m|}$ for each $m \in \mathbb{N}_0 \cup \{-1\}$ as 
    \begin{align}\label{def : definition of Fm}
    F_m(U) \bydef \begin{cases}
         U + \triangle_0^{-1} \left((R^+_{0,1})^{-1} G(U)\right) &\text{ if } m=-1\\
         U + \triangle_0^{-1} \left((R^-)^m G(U)\right) &\text{ if } m \in \mathbb{N}_0.
    \end{cases}
\end{align} 
where $R^+_{0,1}$ is the restriction of $R^{+}$ to $V^{0,1} \to V^{0,2}$.
\end{definition}

 Finally, we  introduce the spaces $V^{k,m}_s$, which are special cases of the ones introduced in Section~\ref{sec:BanachAlgebra} with algebraic weights. Such spaces are natural for the definition of differential operators. Specifically, the index $s$ allows to keep track of the degree of differentiability associated to the series representation of the sequence.

\begin{definition}\label{def : Vk,m,s}
    Define $V^{k,m}_s$ as the special case of the spaces $V^{k,m}$ in \eqref{def : definition of Vkm} for which $\omega_{m,n} = (1+2n+|m|)^s$.  
\end{definition}

In the following lemma, we show that the maps $F_m$ are well defined, and their zeros correspond to solutions of the PDE in \eqref{eq:LadderEqs}.
In particular, the spaces $V^{k,m}_s$  allow us to use a bootstrapping argument, leading to the regularity of solutions.

\begin{lemma}\label{lem : bootstrapping}
Fix $m \in \mathbb{N}_0 \cup\{-1\}$. The function  $ F_m : V^{0,|m|} \to V^{0,|m|}$ is well defined and Fr\'echet differentiable. 
Furthermore if $\widetilde{U} \in V^{0,|m|}$ is a zero of $F_m$, then its corresponding function $\tilde{u}  = \cM[\widetilde{U}] $  is infinitely differentiable and solves the PDE \eqref{eq:LadderEqs}.
\end{lemma}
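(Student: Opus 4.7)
The plan is to show that $F_m$ is a smooth quadratic map on $V^{0,|m|}$ by composing bounded operators with the convolution product, and then to bootstrap the regularity of any zero through the weighted scale $\{V^{0,|m|}_s\}_{s \ge 0}$ of Definition \ref{def : Vk,m,s} to produce smoothness of the associated function $\widetilde u$.

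For well-definedness I would chain three ingredients. First, by the Banach algebra property of Theorem \ref{lem : banach algebra}, the quadratic nonlinearity $G(U) = U*U$ maps $V^{0,|m|}$ into $V^{0,2|m|}$ with $\|G(U)\|_1 \le \|U\|_1^2$. Next, the explicit ladder formulas \eqref{def : definition of R plus and R minus on jacobi} show that $R^\pm$ act as bidiagonal maps with uniformly bounded entries, so $(R^-)^m$ is a bounded map $V^{0,2m} \to V^{0,|m|}$ for $m \in \mathbb N_0$. For the case $m=-1$, the fact that $G(U)\in V^{0,2}$ arises from a $U \in V^{0,1}$ whose associated function has a first-order zero at the origin guarantees that $G(U)$ lies in the range of $R^+_{0,1}$ and that $(R^+_{0,1})^{-1}G(U)$ belongs to $V^{0,1}$; establishing a bound on this inverse on the relevant subspace by solving the bidiagonal system explicitly is the main technical obstacle of the lemma. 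Finally, Proposition \ref{prop : inverse of dirichlet laplacian} makes $\triangle_0^{-1}$ a bounded tridiagonal operator on $V^{0,|m|}$. Composing these bounded operators with the bilinear $G$ produces a well-defined quadratic map, which is therefore automatically Fr\'echet differentiable with $DF_m(U)[H] = H + \triangle_0^{-1}\bigl(T_m(2\,U*H)\bigr)$ for the appropriate linear operator $T_m \in \{(R^+_{0,1})^{-1},(R^-)^m\}$.

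The regularity step proceeds by bootstrapping in the Sobolev-type scale $V^{0,|m|}_s$. Repeating the proof of Theorem \ref{lem : banach algebra} verbatim, each $V^{0,m}_s$ with $s \ge 0$ is a Banach algebra, since the algebraic weights $(1+2n+|m|)^s$ satisfy the submultiplicativity condition of Definition \ref{def:AdmissibleWeights}. The ladder operators $R^\pm$ preserve the regularity index $s$ because their matrix entries are $O(1)$ in $n$, while the $O(n^{-2})$ decay of the coefficients in \eqref{def : inverse laplacian dirichlet} yields boundedness of $\triangle_0^{-1}: V^{0,m}_s \to V^{0,m}_{s+2}$. Rewriting the fixed-point identity $\widetilde U = -\triangle_0^{-1}\bigl(T_m\,G(\widetilde U)\bigr)$ and iterating promotes regularity from $s$ to $s+2$ at each step, yielding $\widetilde U \in V^{0,|m|}_s$ for every $s \ge 0$. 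A standard embedding at the level of the Zernike series $\widetilde u = \cM[\widetilde U]$ then gives $\widetilde u \in C^\infty(\overline{\bD})$, and applying $\triangle$ pointwise to $\widetilde u + \triangle_0^{-1}\bigl(T_m G(\widetilde U)\bigr)=0$ (which is legitimate now that all quantities are smooth) recovers the PDE \eqref{eq:LadderEqs}, with the boundary condition $\widetilde u|_{\partial\bD}=0$ already enforced by $\triangle_0^{-1}$.
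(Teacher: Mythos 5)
Your overall architecture (Banach algebra for $G$, boundedness of the ladder operators, smoothing of $\triangle_0^{-1}$, bootstrap through the scale $V^{0,|m|}_s$, then $C^\infty$ via the $D^\pm$ operators) coincides with the paper's proof, and the $m\in\mathbb{N}_0$ case is handled correctly. The gap is in the $m=-1$ case, which you yourself flag as ``the main technical obstacle'' but then resolve with a claim that is not the right one. You assert that since $\cM[U]$ has a first-order zero at the origin, $(R^+_{0,1})^{-1}G(U)$ belongs to $V^{0,1}$. A pointwise vanishing argument for the function $u^2/z$ does not give $\ell^1$-summability of its Zernike coefficients: by the explicit formula \eqref{def : 1/r}, $(R^+_{0,1})^{-1}$ is a \emph{full} upper-triangular matrix whose $j$-th column has $\ell^1$-mass $\sum_{i\le j}\frac{2(i+1)^2}{(j+1)(j+2)}\sim j$, so it is unbounded as an operator $V^{0,2}\to V^{0,1}$, and there is no reason for $(R^+_{0,1})^{-1}(U*U)$ to lie in $V^{0,1}$ for a general $U\in V^{0,1}$ (equivalently, $((R^+_{0,0})^{-1}U)*U$ need not have summable coefficients when $U$ is merely $\ell^1$).

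The paper's resolution (Lemma \ref{lem : inverse of R01}) is to prove the weaker but correct statement that $(R^+_{0,1})^{-1}:V^{0,2}_s\to V^{0,1}_{s-1}$ is bounded, i.e.\ the inverse costs exactly one degree of regularity, and then to observe that $\triangle_0^{-1}:V^{0,1}_{s-1}\to V^{0,1}_{s+1}$ gains two, so the composition $\triangle_0^{-1}(R^+_{0,1})^{-1}G$ still maps $V^{0,1}_s$ into $V^{0,1}_{s+1}\subseteq V^{0,1}_s$; well-definedness of $F_{-1}$ on $V^{0,1}=V^{0,1}_0$ only holds because of this compensation, not because $(R^+_{0,1})^{-1}G(U)$ itself lands in $V^{0,1}$. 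This also changes your bootstrap quantitatively: for $m=-1$ each iteration of the fixed-point identity promotes $\widetilde U$ from $V_s^{0,1}$ to $V_{s+1}^{0,1}$ (not $V_{s+2}^{0,1}$), which still suffices for the induction. The rest of your argument --- admissibility of the algebraic weights, boundedness of $(R^-)^m$ with norm $1$, the $O(n^{-2})$ gain in \eqref{def : inverse laplacian dirichlet}, smoothness via the ladder operators, and the Dirichlet condition coming from the range of $\triangle_0^{-1}$ --- matches the paper and is fine.
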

\begin{proof} 

To show the function is well defined, note first from Proposition \ref{prop:BanachAlgebraRange}  that $G$ restricts to a map from $ V^{0,|m|}$ to $ V^{0,2|m|}$. Furthermore the operators $R^\pm$ restrict to maps from $V^{0,m'}$ to $V^{0,m'\pm1}$ for all $m' \geq 0$. It follows that $ F_m $ is well defined as a map from $ V^{0,|m|}$ to $ V^{0,|m|}$, at least for $m\geq0$.

In the case of $m=-1$, there is an added complication of the operator $ (R^+_{0,1})^{-1}$, which is unbounded and only densely defined. 
Nevertheless,  we show in  Lemma \ref{lem : inverse of R01} that $R_{0,1}^{+} : V^{0,1} \to V^{0,2}$ has a bounded inverse as a map between differently weighted spaces, in particular   that $(R^+_{0,1})^{-1}:V^{0,2}_s\to V^{0,1}_{s-1}$ is a bounded linear operator for all $ s \geq 0$.

From Proposition \ref{prop : inverse of dirichlet laplacian} we have that $\triangle_0^{-1} : V^{k,|m|}_{s} \to V^{k,|m|}_{s+2}$ is a bounded linear operator, and from Lemma \ref{lem : banach algebra} we have that  $G:V_s^{k,|m|} \to V_s^{k,2|m|}$ is smooth   with Frech\'et derivative $ DG(x)h = 2 x*h$ for all $x,h \in V_s^{k,|m|}$. 
It then follows that  the compositions $\triangle_0^{-1} (R^+_{0,1})^{-1} G(U):V^{k,|m|}_{s}  \to V^{k,|m|}_{s+1} $ 
and $\triangle_0^{-1} (R^-)^{m} G(U):V^{k,|m|}_{s}  \to V^{k,|m|}_{s+2} $ are well defined and Frech\'et differentiable, as are all functions $ F_m : V^{0,|m|} \to V^{0,|m|}$. 

The solutions being infinitely smooth follows from a standard bootstrapping argument: 
 make the inductive  assumption that $\widetilde{U} \in V_s^{0,|m|}$. 
If $F(\widetilde{U})=0$, then  $ \widetilde{U} = - \triangle_0^{-1}  (R^+_{0,1})^{-1} G(\widetilde{U})  $ in the case $m=-1$, and $\widetilde{U}=- \triangle_0^{-1}  (R^-)^m G(\widetilde{U}) $ in the case $m\geq 0$. 
By the preceeding argument we demonstrated that  the right hand side of these equations is smoothing, hence $\widetilde{U} \in V_{s+1}^{0,|m|}$ in the case $m=-1$, and  $\widetilde{U} \in V_{s+2}^{0,|m|}$ in the case $m\geq 0$. 
Since  $\widetilde{U} \in V_0^{0,|m|}$ by the hypothesis of this lemma, it follows by induction that $ \widetilde{U} \in V_{s}^{0,|m|}$ for all $ s\geq 0$.  

       Furthermore, note that any derivatives of  $ \tilde{u} = \cM(  \widetilde{U})$ may be obtained as a combination of  $ D^- = 2 \partial_z$ and $ D^+ = 2 \partial_{\bar{z}}$. 
        Since $ \widetilde{U} \in V_{s}^{0,|m|}$ for all $ s\geq 0$, and the  ladder operators 
$           D^{\pm}_{k,m}:V_{s}^{k} \to V_{s-2}^{k+1}$ are bounded, it follows that the derivatives of $ \tilde{u}$ will have bounded Zernike coefficients in $V_{s}^{k}$ for all $ s \geq 0$. Hence $ \tilde{u} \in C^{\infty}(\bD ,\C)$.  As $\tilde{u}$ is smooth it satisfies \eqref{eq:LadderEqs} in the strong sense.
Lastly, as $ \widetilde{U} = - \triangle_0^{-1}  (R^+_{0,1})^{-1} G(\widetilde{U})  $ in the case $m=-1$  (respectively $\widetilde{U}=- \triangle_0^{-1}  (R^-)^m G(\widetilde{U}) $ in the case $m\geq 0$), we see that $\widetilde{U}$ is in the image of the inverse Dirichlet Laplacian, and thereby satisfies Dirichlet-0 boundary conditions. 
\end{proof}

\subsection{Computer-assisted analysis}\label{sec : computer assisted analysis}

Lemma~\ref{lem : bootstrapping} shows that zeros of $F_m$ in $V^{0,|m|}$ provide smooth solutions to \eqref{eq : zero finding original} and \eqref{eq : zero finding original 1/z}. We are now in a position to present our computer-assisted approach to prove the existence of solutions to \eqref{def : definition of Fm}.
Our goal is to use Theorem \ref{thm::RadiiNonLin} in order to prove the existence of a zero of $F_m$ given in \eqref{def : definition of Fm}. In particular, we assume that we have access to some  $U_0 \in V^{0,|m|}$, which is an approximate zero of $F_m$ and $A : V^{0,|m|} \to V^{0,|m|}$, which is a bounded linear operator approximating the inverse of $DF_m(U_0).$ In practice, both $U_0$ and $A$ will be constructed numerically.

Indeed, let us fix $N \in \mathbb{N}$ representing the size of the numerical truncation and define the following projection operators
 \begin{align}\label{def : projection truncation}
 \nonumber
    \pi^N(V)  =  \begin{cases}
          v_n,  & n \in I^N \\
              0, &n \notin I^N
    \end{cases} ~~ \text{ and } ~~
     \pi^{\infty}(V)  =  \begin{cases}
          0,  & n \in I^N \\
              v_n, &n \notin I^N
    \end{cases}
 \end{align}
    where $I^N \bydef \{n \in {\mathbb{N}_{0}}, n \leq N\}$ for all $V = (v_n)_{n \in  {\mathbb{N}_{0}}} \in V^{0,|m|}$.
    In particular, numerically we build $U_0$ such that $U_0 = \pi^{N} U_0$, meaning that $U_0$ only has a finite number of non-zero coefficients ($U_0$ can be seen as a vector) which is stored on the computer. Its construction is obtained using a Newton method (see \cite{julia_zernike} for additional details).




\subsubsection{Construction of the approximate inverse $A$}

Now that the approximate solution $U_0$ is fixed, we present the construction of an approximate inverse $A$ for $DF_m(U_0)$.    
First, noticing that $DF_m(U_0)$ is a compact perturbation of the identity, we look for $A$ as a finite-rank perturbation of the identity as well. In other words, we numerically construct $A^N$ satisfying $A^N = \pi^N A^N \pi^N$ ($A^N$ can be seen as a $N+1$ by $N+1$ matrix) and such that $A^N$ approximates the inverse of the Galerkin projection of $DF_m(U_0)$ of size $N$, namely $\pi^NDF_m(U_0)\pi^N$. Then we define $A : V^{0,|m|} \to V^{0,|m|}$ as 
\[
    A \bydef A^N + \pi^{\infty}.
\]
Intuitively, the tail of the operator $A$ acts as the identity. We will show in Lemma \ref{lem : Z1 bound} that this construction is justified when $N$ is big enough.

Now that $U_0$ and $A$ are constructed, following Theorem \ref{thm::RadiiNonLin}, it remains to compute the bounds $Y_0, Z_1$ and $Z_2$ introduced in the theorem.

 \subsubsection{Computation of the bounds}

\begin{lemma}\label{lem : Y0 bound}
    Let $Y_0$ satisfying 
    \begin{align}
    Y_0 \geq 
 \begin{cases}
      \|A^N(U_0 + \pi^N\triangle_{\D}^{-1}(R^{-})^m(U_0*U_0)\|_1 + \|(\pi^{2N+m+1}-\pi^N) \triangle_{\D}^{-1}(R^{-})^m(U_0*U_0)\|_1 &\text{ if } m \in \mathbb{N}_0\\
       \|A^N(U_0 + \pi^N\triangle_{\D}^{-1}(R_{0,1}^{+})^{-1}(U_0*U_0)\|_1 + \|(\pi^{2N+1}-\pi^N) \triangle_{\D}^{-1}(R_{0,1}^{+})^{-1}(U_0*U_0)\|_1 &\text{ if } m = -1
\end{cases}
\end{align}
    then $\|AF_m(U_0)\|_1 \leq Y_0$. 
\end{lemma}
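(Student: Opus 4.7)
The plan is to unfold $F_m(U_0)$ directly from \eqref{def : definition of Fm} and exploit the fact that $U_0 = \pi^N U_0$ has finite support by construction. By Lemma \ref{prop:BanachAlgebraRange}, the convolution $U_0 * U_0$ then has radial support contained in $\{0,\dots,2N\}$, lying in $V^{0,2|m|}$ when $m \in \mathbb N_0$ and in $V^{0,2}$ when $m=-1$. I would then propagate this support through the remaining linear operators to obtain a \emph{finite} support bound on the entire nonlinear tail of $F_m(U_0)$.

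For $m \in \mathbb N_0$, the explicit formulas \eqref{def : definition of R plus and R minus on jacobi} show that each application of $R^-$ raises the maximal radial index by at most one, so $(R^-)^m(U_0 * U_0) \in V^{0,|m|}$ is supported in $\{0,\dots,2N+m\}$; Proposition \ref{prop : inverse of dirichlet laplacian} then shifts the support by at most one more, giving $\triangle_\D^{-1}(R^-)^m(U_0 * U_0)$ supported in $\{0,\dots,2N+m+1\}$, which matches precisely the truncation index $\pi^{2N+m+1}$ appearing in the lemma. For $m=-1$, the corresponding step uses that $(R_{0,1}^+)^{-1}$ amounts to division by $z$: since $U_0*U_0 \in V^{0,2}$ corresponds to a function with a built-in factor of $z^2$, this division remains in the polynomial ring, sends $\cQ_n^{0,2}$ to a combination of $\cQ_0^{0,1},\dots,\cQ_n^{0,1}$, and hence preserves support in $\{0,\dots,2N\}$; applying $\triangle_\D^{-1}$ then yields the $\{0,\dots,2N+1\}$ support required by the lemma.

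With the support bounds in hand, the estimate is immediate. Writing $A = A^N + \pi^\infty$ and using $A^N = \pi^N A^N \pi^N$ together with $\pi^N U_0 = U_0$ and $\pi^\infty U_0 = 0$ produces the splitting
\begin{align*}
A F_m(U_0) = A^N\bigl(U_0 + \pi^N \triangle_\D^{-1}(\cdots)\bigr) + \pi^\infty \triangle_\D^{-1}(\cdots),
\end{align*}
where $(\cdots)$ stands for $(R^-)^m(U_0 * U_0)$ or $(R_{0,1}^+)^{-1}(U_0 * U_0)$ according to the case. The support bound ensures $\pi^\infty \triangle_\D^{-1}(\cdots) = (\pi^{2N+m+1} - \pi^N)\triangle_\D^{-1}(\cdots)$ (respectively with $2N+1$ when $m=-1$), and the triangle inequality on the $1$-norm then yields the desired upper bound on $\|A F_m(U_0)\|_1$.

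No serious obstacle is anticipated: the argument is essentially bookkeeping of supports through banded ladder operators followed by a triangle inequality. The one subtle point is the $m=-1$ case, where the unbounded operator $(R_{0,1}^+)^{-1}$ could a priori destroy the finite support; this is ruled out by the factor $r^{|m|}$ built into the Zernike basis elements, which guarantees that $U_0 * U_0$ is divisible by $z^2$ so that division by $z$ stays within the space of Zernike polynomials.
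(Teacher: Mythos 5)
Your proposal is correct and follows essentially the same route as the paper's proof: both track the finite support of $U_0*U_0$ through the banded operators $(R^-)^m$ (or the upper-triangular $(R_{0,1}^+)^{-1}$) and $\triangle_\D^{-1}$ to identify the exact truncation index, then split $A = A^N + \pi^\infty$ and apply the triangle inequality. The only cosmetic difference is that you invoke Lemma \ref{prop:BanachAlgebraRange} for the support of the convolution where the paper argues via polynomial degree, but these are equivalent.
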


\begin{proof}
Suppose that $m \in \mathbb{N}_0$. Then, notice that since $U_0 = \pi^N U_0$, then $U_0$ represents a polynomial of order $2N+m$ and consequently, $U_0*U_0$ represents a polynomial of order $4N+2m$. This implies that $U_0*U_0 = \pi^{2N}U_0*U_0$, when $U_0*U_0$ is written in $V^{0,2m}$.  Then, as $(R^{-})^m$ is lower triangular and banded, with a band of size $m$, we obtain $(R^{-})^m(U_0*U_0) = \pi^{2N+m}(R^{-})^m(U_0*U_0)$.  Finally, using \eqref{def : inverse laplacian dirichlet}, we have $\triangle_{\D}^{-1}(R^{-})^m(U_0*U_0) = \pi^{2N+m+1}\triangle_{\D}^{-1}(R^{-})^m(U_0*U_0)$. Therefore, we have
    \begin{align*}
        \|AF_m(U_0)\|_1 &=  \|A^N(U_0 +  \triangle_{\D}^{-1}(R^{-})^m(U_0*U_0)\|_1 + \| \pi^{\infty} \triangle_{\D}^{-1}(R^{-})^m(U_0*U_0)\|_1\\
        &=    \|A^N(U_0 +  \pi^N\triangle_{\D}^{-1}(R^{-})^m(U_0*U_0)\|_1 + \| (\pi^{2N+m+1}-\pi^N) \triangle_{\D}^{-1}(R^{-})^m(U_0*U_0)\|_1.
    \end{align*}

Suppose now that $m=-1.$ Then, as $(R_{0,1}^{+})^{-1}$ is upper triangular (cf. \eqref{def : 1/r}), we obtain that $(R_{0,1}^{+})^{-1}(U_0*U_0) = \pi^{2N}(R_{0,1}^{+})^{-1}(U_0*U_0)$.  Moreover, using \eqref{def : inverse laplacian dirichlet}, we have $\triangle_{\D}^{-1}(R_{0,1}^{+})^{-1}(U_0*U_0) = \pi^{2N+1}\triangle_{\D}^{-1}(R_{0,1}^{+})^{-1}(U_0*U_0)$. We conclude the proof in a similar fashion as the case $m \in \mathbb{N}_0.$  
\end{proof}

\begin{lemma}\label{lem : Z1 bound}
    Let $Z_{0,m}$ be a bound satisfying
    \begin{align*}
         Z_{0,m} &\geq 
    \begin{cases}
   \|\pi^N - A^N(I + \triangle_{\D}^{-1}(R^{-})^mDG(U_0)\pi^{N})\|_1 &\text{ if } m \in \mathbb{N}_0\\
     \|\pi^N  - A^N(I + \triangle_{\D}^{-1}(R_{0,1}^{+})^{-1}DG(U_0))\pi^N\|_1 &\text{ if } m = -1.
     \end{cases}
    \end{align*}
    If $Z_1$ satisfies
    \begin{align*}
    Z_1 &\geq 
    \begin{cases}
   \max\left\{Z_{0,m}, ~ \|A^N\triangle_{\D}^{-1}(R^{-})^mDG(U_0)(\pi^{2N+m+1}-\pi^{N})\|_1 \right\}
    + \frac{2\|U_0\|_1}{(2(N+1)+m)^2} &\text{ if } m \in \mathbb{N}_0\\
     \max\left\{Z_{0,-1},~ \|A^N\triangle_{\D}^{-1}DG\left((R_{0,0}^{+})^{-1}U_0\right)(\pi^{2N+1}-\pi^N)\|_1 \right\}
    + \frac{\|(R_{0,0}^+)^{-1}U_0\|_1}{2N^2} &\text{ if } m = -1
     \end{cases}
    \end{align*}
    then $\|I - ADF_m(U_0)\|_1 \leq Z_1.$ 
\end{lemma}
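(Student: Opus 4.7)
The plan is to expand $I - ADF_m(U_0)$ using $A = A^N + \pi^\infty$ and the explicit form of $DF_m$ from~\eqref{def : definition of Fm}, and then exploit the finite-rank structure produced by $U_0 = \pi^N U_0$ to collapse the composition onto a finite block. For $m \in \mathbb{N}_0$, substitution gives
\begin{equation*}
I - ADF_m(U_0) = (\pi^N - A^N) \;-\; A^N\triangle_\D^{-1}(R^-)^m DG(U_0) \;-\; \pi^\infty \triangle_\D^{-1}(R^-)^m DG(U_0),
\end{equation*}
into which I would insert the partition $I = \pi^N + (\pi^{2N+m+1}-\pi^N) + (I-\pi^{2N+m+1})$ on the right of the middle term. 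The collapse to a finite block rests on a degree-counting observation: using $U_0 = \pi^N U_0$ together with the sharp Koornwinder inequality $n_3 \geq |n_1-n_2|-m$ extracted from the proof of Lemma~\ref{prop:BanachAlgebraRange}, the convolution $U_0*e_n$ has coefficient support starting at index at least $n - N - m$. Since $(R^-)^m$ cannot lower indices and $\triangle_\D^{-1}$ can shift by at most $-1$ (Proposition~\ref{prop : inverse of dirichlet laplacian}), the output of $\triangle_\D^{-1}(R^-)^m DG(U_0) e_n$ is supported in indices $\geq n-N-m-1$, which exceeds $N$ precisely when $n > 2N+m+1$; in that range $A^N = \pi^N A^N \pi^N$ annihilates it, so $A^N\triangle_\D^{-1}(R^-)^m DG(U_0)(I - \pi^{2N+m+1}) = 0$. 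What remains is a sum of three pieces: one supported only on columns $n \leq N$ (which matches $Z_{0,m}$), one supported on columns $N < n \leq 2N+m+1$ (the second quantity inside the $\max$), and the tail $\pi^\infty\triangle_\D^{-1}(R^-)^m DG(U_0)$.

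The appearance of the $\max$ in the statement then comes from the fact that the operator norm on $V^{0,|m|}\cong \ell^1$ (with trivial weights) is the supremum of column $\ell^1$-norms: the first two operators have disjoint active columns, so the norm of their sum equals the maximum of their individual norms. For the tail, I use the Banach-algebra bound $\|DG(U_0)\|_1 \leq 2\|U_0\|_1$ from Theorem~\ref{lem : banach algebra}, the identity $\|R^-_{k,m}\|_1 \leq 1$ obtained by summing the two coefficients in~\eqref{def : definition of R plus and R minus on jacobi}, and an explicit computation from Proposition~\ref{prop : inverse of dirichlet laplacian} in which the three coefficients of $\triangle_\D^{-1}\cQ^{0,m}_n$ combine on a common denominator to give $\|\triangle_\D^{-1}\cQ^{0,m}_n\|_1 = \frac{1}{(2n+m)(2n+m+2)}$ for $n \geq 1$. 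Since $\pi^\infty$ acts nontrivially only when the support reaches index $N$ or above, this yields $\|\pi^\infty\triangle_\D^{-1}\|_1 \leq \frac{1}{(2(N+1)+m)^2}$ and hence $\|\pi^\infty\triangle_\D^{-1}(R^-)^m DG(U_0)\|_1 \leq \frac{2\|U_0\|_1}{(2(N+1)+m)^2}$, which is precisely the additive term in the bound.

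The case $m=-1$ follows the same skeleton once one establishes the commutation identity
\begin{equation*}
(R^+_{0,1})^{-1} DG(U_0) = DG\bigl((R^+_{0,0})^{-1} U_0\bigr),
\end{equation*}
which I would derive from the functional product rule $(U_0 h)/z = (U_0/z)\cdot h$, using that $U_0 \in V^{0,1}$ vanishes at the origin so that $(R^+_{0,0})^{-1}U_0$ is well-defined via Lemma~\ref{lem : inverse of R01}. This identity is the crux of the argument in this case: it replaces the densely defined unbounded operator $(R^+_{0,1})^{-1}$ on the right of $DG(U_0)$ by a bounded multiplication operator whose symbol $(R^+_{0,0})^{-1}U_0$ is again finitely supported, because $(R^+_{0,0})^{-1}$ is upper triangular and $U_0=\pi^N U_0$. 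Re-running the index analysis now for products $V^{0,0} * V^{0,1}$ via Koornwinder produces a lowest output index of $n - N - 1$, hence the threshold $n > 2N+1$ and the cutoff $\pi^{2N+1}$ appearing in the statement; the tail estimate becomes $\frac{2\|(R^+_{0,0})^{-1}U_0\|_1}{(2N+3)^2}$, which is majorized by $\frac{\|(R^+_{0,0})^{-1}U_0\|_1}{2N^2}$ since $(2N+3)^2 \geq 4N^2$. I expect the main delicate point of the whole argument to be precisely this operator-level justification of the commutation identity; the rest is careful but routine projection bookkeeping.
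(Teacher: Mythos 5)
Your proposal is correct and follows essentially the same route as the paper: expand $A = A^N + \pi^\infty$, isolate the tail $\pi^\infty\triangle_\D^{-1}(\cdots)DG(U_0)$ and bound it via $\|\pi^\infty\triangle_\D^{-1}\|_1$ together with the Banach-algebra estimate $\|DG(U_0)\|_1\le 2\|U_0\|_1$, collapse the $A^N(\cdots)$ block onto columns $\le 2N+m+1$ (resp.\ $\le 2N+1$) using the finite support of $U_0$, split disjointly supported columns to get the $\max$, and for $m=-1$ use the commutation identity $(R^+_{0,1})^{-1}DG(U_0)=DG((R^+_{0,0})^{-1}U_0)$. The only cosmetic difference is that you make the Koornwinder lower-index bound explicit and simplify $\|\triangle_\D^{-1}\cQ^{0,m}_n\|_1 = \tfrac{1}{(2n+m)(2n+m+2)}$ directly, whereas the paper cites the reasoning of its $Y_0$ lemma and uses the slightly cruder $1/4N^2$ in the $m=-1$ tail — both yield the stated constants.
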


\begin{proof}
Suppose that $m \in \mathbb{N}_0$. Then,
\begin{align*}
    \|I - ADF_m(U_0)\|_1 &= \|\pi^N + \pi^{\infty} - (A^N+ \pi^{\infty})(I + \triangle_{\D}^{-1}(R^{-})^mDG(U_0))\|_1\\
    &\leq  \|\pi^N  - A^N(I + \triangle_{\D}^{-1}(R^{-})^mDG(U_0))\|_1 + \|\pi^{\infty}\triangle_{\D}^{-1}(R^{-})^mDG(U_0)\|_1.
\end{align*}
 Now, notice that because $U_0 = \pi^N U_0$, then 
\[
\pi^N\triangle_{\D}^{-1}(R^{-})^mDG(U_0) = \pi^N\triangle_{\D}^{-1}(R^{-})^mDG(U_0)\pi^{2N+m+1},
\]
using the same reasoning as for the proof of Lemma \ref{lem : Y0 bound}.
Moreover, we have
\begin{align*}
    &\|\pi^N - A^N\triangle_{\D}^{-1}(R^{-})^mDG(U_0)\pi^{2N+m+1}\|_1 \\ \leq & \max\left\{\|\pi^N - A^N\triangle_{\D}^{-1}(R^{-})^mDG(U_0)\pi^{N}\|_1, ~ \|A^N\triangle_{\D}^{-1}(R^{-})^mDG(U_0)(\pi^{2N+m+1}-\pi^{N})\|_1\right\}.
\end{align*}
Finally, notice from \eqref{def : inverse laplacian dirichlet} that we have the operator bound:
{\small
\begin{align}
     \|\pi^{\infty}\triangle_{\D}^{-1} \|_1 &= \sup_{n \geq N+1}  \left\|
     \frac{ \pi_{\infty}\cQ_{n+1}^{0,m}}{4(2n+m+1)(2n+m+2)}
	-
	\frac{ \pi_{\infty}\cQ_{n}^{0,m}}{2(2n+m+2)(2n+m)}
	+
	\frac{\pi_{\infty} \cQ_{n-1}^{0,m}}{4(2n+m)(2n+m+1)} \right\|_1 \\
    &\leq \frac{1}{(2(N+1)+m)^2},
\end{align}
}
and thereby we obtain: 
\begin{align*}
    \|\pi^{\infty}\triangle_{\D}^{-1}(R^{-})^mDG(U_0)\|_1 \leq \|\pi^{\infty}\triangle_{\D}^{-1}\|_1 \|(R^{-})^mDG(U_0)\|_1
     \leq \frac{1}{{(2(N+1)+m)^2}}\|(R^{-})^m\|_1\|2U_0\|_1.
\end{align*}
We conclude the proof of the case $m \in \mathbb{N}_0$ using \eqref{eq : op norm rm}. 

Suppose now that $m=-1$. We have 
\begin{align*}
    \|I - ADF_m(U_0)\|_1 &= \|\pi^N + \pi^{\infty} - (A^N+ \pi^{\infty})(I + \triangle_{\D}^{-1}(R_{0,1}^{+})^{-1}DG(U_0))\|_1\\
    &\leq  \|\pi^N  - A^N(I + \triangle_{\D}^{-1}(R_{0,1}^{+})^{-1}DG(U_0)) + \pi^{\infty}\triangle_{\D}^{-1}(R_{0,1}^{+})^{-1}DG(U_0)\|_1\\
    & \leq \|\pi^N  - A^N(I + \triangle_{\D}^{-1}(R_{0,1}^{+})^{-1}DG(U_0)) \|_1+ \|\pi^{\infty}\triangle_{\D}^{-1}(R_{0,1}^{+})^{-1}DG(U_0)\|_1.
\end{align*}
Now,   given $W \in V^{0,1}$, notice that 
\begin{equation}\label{eq : switch of 1/r}
   (R_{0,1}^{+})^{-1}(U_0*W) = \left((R_{0,0}^{+})^{-1}U_0\right)*W 
\end{equation}
since $\frac{1}{r}(u_0 w) = \frac{u_0}{r} w$ if $w$ is the function representation of $W$. Consequently, we obtain the following 
\begin{align*}
\|\pi_\infty\triangle_{\D}^{-1}(R_{0,1}^{+})^{-1}DG(U_0)\|_1
\leq \|\pi_\infty\triangle_{\D}^{-1} \|_1 \| (R_{0,1}^{+})^{-1}DG(U_0)\|_1  &= 
\frac{1}{4 N^2}  \sup_{\|h\| =1} \| (R_{0,1}^{+})^{-1} ( 2 U_0 * h)  \|_1
\\  &= 
\frac{1}{2 N^2} 
\|  (R_{0,0}^{+})^{-1}   U_0 \|_1.
\end{align*}
Then,
\begin{align*}
  &~~~~  \|\pi^N  - A^N(I + \triangle_{\D}^{-1}(R_{0,1}^{+})^{-1}DG(U_0)) \|_1 \\
    &= \max\left\{\|\pi^N  - A^N(I + \triangle_{\D}^{-1}(R_{0,1}^{+})^{-1}DG(U_0))\pi^N\|_1,~ \|A^N \triangle_{\D}^{-1}(R_{0,1}^{+})^{-1}DG(U_0)\pi^{\infty}\|_1 \right\}.
\end{align*}
Finally, using \eqref{eq : switch of 1/r}, we have that 
\[
(R_{0,1}^{+})^{-1}DG(U_0) = DG\left((R_{0,0}^{+})^{-1}U_0\right)
\]
since $G$ is quadratic. Now, using \eqref{def : 1/r 0}, we have that   $(R_{0,0}^{+})^{-1}$ is upper triangular and therefore $(R_{0,0}^{+})^{-1}U_0 = \pi^N (R_{0,0}^{+})^{-1}U_0$. This yields
\begin{align*}
     \|A^N\triangle_{\D}^{-1}(R_{0,1}^{+})^{-1}DG(U_0)\pi^{\infty}\|_1 &= \|A^N\triangle_{\D}^{-1}DG\left((R_{0,0}^{+})^{-1}U_0\right)\pi^{\infty}\|_1 \\
     &= \|A^N\triangle_{\D}^{-1}DG\left((R_{0,0}^{+})^{-1}U_0\right)(\pi^{2N+1}-\pi^N)\|_1. \qedhere
\end{align*}

%

%
\end{proof}

\begin{lemma}\label{lem : Z2 bound}
    Let $Z_2$ be a bound satisfying 
   \begin{align*}
     Z_2 \geq 
       \begin{cases}
           2\|A^N\triangle_{\D}^{-1}(R^{-})^m\pi^{N+1}\|_1 + \frac{2}{{(2(N+1)+m)^2}} &\text{ if } m \in \mathbb{N}_0\\
            2\|A^N\triangle_{\D}^{-1}(R_{0,1}^{+})^{-1}\pi^{N}\|_1 + {\frac{16}{15}}\frac{\|A^N\|_1}{N+3}+  \frac{1}{2N}  &\text{ if } m = -1
       \end{cases}
   \end{align*}
    then $\|A(DF_m(c)-DF_m(U_0)\|_1 \leq Z_2r$ for all $c \in \overline{B_r(U_0)}.$
\end{lemma}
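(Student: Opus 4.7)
I would start by differentiating: since $G(U) = U * U$, we have $DG(U)h = 2\, U * h$, so
\[
DF_m(c) - DF_m(U_0) = 2\, \triangle_0^{-1}\, L_m\, \big[(c-U_0) * \cdot\,\big],
\]
where $L_m = (R^-)^m$ for $m \in \mathbb{N}_0$ and $L_{-1} = (R_{0,1}^+)^{-1}$. For any $h$ with $\|h\|_1 = 1$, the Banach algebra property (Theorem \ref{lem : banach algebra}) gives $\|(c-U_0)*h\|_1 \leq \|c-U_0\|_1 \leq r$. Splitting $A = A^N + \pi^\infty$ reduces the proof to bounding a finite-rank contribution (from $A^N$) and a tail contribution (from $\pi^\infty$). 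The tail factor $\|\pi^\infty \triangle_0^{-1}\|_1$ is controlled from Proposition \ref{prop : inverse of dirichlet laplacian}; the explicit formulae there give $\|\pi^\infty \triangle_0^{-1}\|_1 \leq \frac{1}{(2(N+1)+m)^2}$, which for $m=-1$ reads $\leq \frac{1}{4N^2}$.

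\textbf{Case $m \in \mathbb{N}_0$.} For the tail, one has $\|(R^-)^m\|_1 \leq 1$ from \eqref{def : definition of R plus and R minus on jacobi}, so
\[
\|\pi^\infty \triangle_0^{-1}(R^-)^m[(c-U_0)*h]\|_1 \leq \tfrac{1}{(2(N+1)+m)^2}\cdot r.
\]
For the finite-rank piece, I observe that $\triangle_0^{-1}$ is tridiagonal and $(R^-)^m$ is lower banded of width $m$ in the sense of mapping $\cQ^{0,m}_n$ to indices $n,\dots,n+m$, so $\pi^N\triangle_0^{-1}(R^-)^m$ reads all of its input from indices $\leq N+1$. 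Hence $A^N \triangle_0^{-1}(R^-)^m = A^N \triangle_0^{-1}(R^-)^m \pi^{N+1}$, and the Banach algebra property yields
\[
\|A^N \triangle_0^{-1}(R^-)^m[(c-U_0)*h]\|_1 \leq \|A^N \triangle_0^{-1}(R^-)^m \pi^{N+1}\|_1 \cdot r.
\]
Doubling and summing these two contributions gives the claimed bound.

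\textbf{Case $m=-1$.} This is the main obstacle because $(R_{0,1}^+)^{-1}$ is unbounded on $V^{0,2}$. The key tool, already invoked in Lemma \ref{lem : Z1 bound}, is the switch identity \eqref{eq : switch of 1/r}: $(R_{0,1}^+)^{-1}(d*h) = ((R_{0,0}^+)^{-1} d)*h$ for $d = c - U_0 \in V^{0,1}$. I would split $d = \pi^N d + \pi^\infty d$. For the $\pi^N d$ piece, since $(R_{0,0}^+)^{-1}$ is upper triangular (inverse of an upper bidiagonal matrix) we may rewrite $((R_{0,0}^+)^{-1}\pi^N d)*h = (R_{0,1}^+)^{-1}[(\pi^N d)*h]$ and then use that $(R_{0,1}^+)^{-1}\pi^N$ lands in $\pi^N V^{0,1}$ (again by upper triangularity), so that $A^N \triangle_0^{-1}(R_{0,1}^+)^{-1}\pi^N$ is finite rank; the Banach algebra property then yields the contribution $\|A^N\triangle_0^{-1}(R_{0,1}^+)^{-1}\pi^N\|_1 \cdot r$. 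For the $\pi^\infty d$ piece one bounds $\|(R_{0,0}^+)^{-1}\pi^\infty\|_1$ explicitly from the entries of $(R_{0,0}^+)^{-1}$, which should yield a constant of order $\tfrac{1}{N+3}$ with the sharp prefactor $\tfrac{8}{15}$; combined with $\|A^N\|_1$ and the factor $2$ from $DG$, this accounts for the $\frac{16}{15}\frac{\|A^N\|_1}{N+3}$ term. Finally the tail $\|\pi^\infty \triangle_0^{-1}\|_1 \leq \frac{1}{4N^2}$ combined with the $\ell^1$ bound on $(R_{0,0}^+)^{-1}d$ and the factor $2$ gives the $\frac{1}{2N}$ term, after absorbing $r$.

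\textbf{Main obstacle.} The $m=-1$ case is the delicate one: the unboundedness of $(R_{0,1}^+)^{-1}$ forces one to route every estimate through the switch identity so that only $(R_{0,0}^+)^{-1}$ ever acts on finite- or tail-indexed blocks where its $\ell^1$-norm can be bounded sharply. Carefully tracking which projections $\pi^N$, $\pi^{N+1}$, $\pi^\infty$ can be inserted without losing control of the finite-rank truncation, and producing the explicit constant $\tfrac{16}{15}$ from the closed-form entries of $(R_{0,0}^+)^{-1}\pi^\infty$, is where the bulk of the technical work lies.
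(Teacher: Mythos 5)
Your treatment of the case $m\in\mathbb{N}_0$ is correct and essentially identical to the paper's: split $A=A^N+\pi^\infty$, insert $\pi^{N+1}$ into the finite-rank piece using the bandedness of $\triangle_0^{-1}$ and $(R^-)^m$, and bound the tail through $\|\pi^\infty\triangle_0^{-1}\|_1\le\frac{1}{(2(N+1)+m)^2}$ and $\|(R^-)^m\|_1\le 1$. (Minor note: $(R^-)^m$ maps $\cQ^{0,2m}_n$, not $\cQ^{0,m}_n$, to indices $n,\dots,n+m$, but this does not affect the argument.)

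The case $m=-1$, however, has a genuine gap. You propose to route every estimate through the switch identity $(R_{0,1}^+)^{-1}(d*h)=((R_{0,0}^+)^{-1}d)*h$ and then bound things in plain $\ell^1$ — in particular you invoke ``the $\ell^1$ bound on $(R_{0,0}^+)^{-1}d$'' (with $d=c-U_0$) and a quantity $\|(R_{0,0}^+)^{-1}\pi^\infty\|_1$ of order $\tfrac{1}{N+3}$. Neither of these exists: from \eqref{def : 1/r 0} the $j$-th column of $(R_{0,0}^+)^{-1}$ has $\ell^1$-norm $\sum_{i=0}^j\frac{2i+1}{j+1}=j+1\to\infty$, so $(R_{0,0}^+)^{-1}$ is unbounded on $V^{0,0}_0$ and $\|(R_{0,0}^+)^{-1}\pi^\infty\|_1=\infty$. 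The switch identity is indeed used in the paper, but only in the $Z_1$ bound, precisely because there $U_0=\pi^N U_0$ is finitely supported, so $(R_{0,0}^+)^{-1}U_0$ is a \emph{finite} vector and the whole composition stays in a finite-dimensional block. In $Z_2$ the perturbation $c-U_0$ is an arbitrary element of a ball, so $(R_{0,0}^+)^{-1}(c-U_0)$ cannot be controlled in $\ell^1$ and the identity buys you nothing.

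The paper instead avoids the switch identity entirely in $Z_2$ and works with the weighted operator norms $\|\cdot\|_{B(V^{k,m}_s,V^{k',m'}_{s'})}$ from Definition \ref{def : Vk,m,s} and Lemma \ref{lem : inverse of R01}. The structural point is that $(R_{0,1}^+)^{-1}$ loses one order of regularity ($V^{0,2}_s\to V^{0,1}_{s-1}$, bounded by $\tfrac12$ at $s=0$), while $\pi^\infty\triangle_0^{-1}$ gains at least one order back with a factor that decays in $N$ — e.g.\ $\|\pi^\infty\triangle_0^{-1}\|_{B(V^{0,1}_{-1},V^{0,1}_0)}\le\tfrac{1}{2N}$ and $\|\triangle_0^{-1}\|_{B(V^{0,1}_{-2},V^{0,1}_0)}\le\tfrac{16}{15}$. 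Combined with the Banach algebra bound $\|DG(c-U_0)\|_1\le 2r$ this produces the three terms exactly. The decomposition is also different from yours: the paper splits the \emph{output} of $DG(c-U_0)$ in $V^{0,2}$ as $\pi^N+\pi^\infty$ (not $c-U_0$ itself), which is what makes the $\pi^N$ inside $\|A^N\triangle_0^{-1}(R_{0,1}^+)^{-1}\pi^N\|_1$ a projection on the correct space and turns the first term into a computable finite matrix norm. To repair your argument you would need to drop the $\ell^1$ claims about $(R_{0,0}^+)^{-1}$, switch to the graded spaces $V^{k,m}_s$, and move the $\pi^N/\pi^\infty$ split to after $DG$ rather than before it.
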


\begin{proof}
Along the proof, given Banach spaces $X,Y$, we denote $\|\cdot\|_{B(X,Y)}$ as the operator norm for bounded linear operators on $X \to Y.$
Suppose that $m \in \mathbb{N}_0$. Then, given $c \in \overline{B_r(U_0)}$, we have
\begin{align*}
    \|A(DF_m(c)-DF_m(U_0)\|_1& = \|A\triangle_{\D}^{-1}(R^{-})^mDG(c-U_0)\|_1\\
    & \leq \|A^N\triangle_{\D}^{-1}(R^{-})^mDG(c-U_0)\|_1 + \|\pi^{\infty}\triangle_{\D}^{-1}(R^{-})^mDG(c-U_0)\|_1,
\end{align*}
where we used that $DG(c) - DG(U_0) = DG(c-U_0)$ since $G$ is quadratic.
 Now, notice that
\[
\|\pi^{\infty}\triangle_{\D}^{-1}(R^{-})^mDG(c-U_0)\|_1 \leq \frac{2}{{(2(N+1)+m)^2}}\|(R^{-})^m\|_1 \|c-U_0\|_1 \leq \frac{1}{2N^2}\|(R^{-})^m\|_1r
\]
and 
\begin{align*}
\|A^N\triangle_{\D}^{-1}(R^{-})^mDG(c-U_0)\|_1 &= \|A^N\triangle_{\D}^{-1}\pi^{N+1}(R^{-})^mDG(c-U_0)\|_1 \\
& \leq 2\|A^N\triangle_{\D}^{-1}(R^{-})^m\pi^{N+1}\|_1r.
\end{align*}
We conclude the proof of the case $m \in \mathbb{N}_0$ using \eqref{eq : op norm rm}. 

For the case $m=-1$, we have
\begin{align*}
    \|A(DF_m(c)-DF_m(U_0) )\|_1& = \|A\triangle_{\D}^{-1}(R_{0,1}^{+})^{-1}DG(c-U_0)\|_1\\
    & \leq \|A^N\triangle_{\D}^{-1}(R_{0,1}^{+})^{-1}DG(c-U_0)\|_1 + \|\pi^{\infty}\triangle_{\D}^{-1}(R_{0,1}^{+})^{-1}DG(c-U_0)\|_1.
\end{align*}
 Now,  combining Definition \ref{def : Vk,m,s} with \eqref{def : inverse laplacian dirichlet}, we obtain that
 
 {\footnotesize
\begin{align}
     \|\pi^{\infty}\triangle_{\D}^{-1} \|_{B(V^{0,1}_{-1}, V^{0,1}_{0})} \leq 
     \sup_{n \geq N+1} 
     \frac{1}{(2n+2)^{-1}}
     \left(
     \frac{ 1}{4(2n+2)(2n+3)}
	+
	\frac{ 1}{2(2n+3)(2n+1)}
	+
	\frac{1}{4(2n+1)(2n+2)} \right)  \leq \frac{1}{2N}
\end{align}
}
 Then, using \eqref{eq : norm identity inv R}, we get 
\[
\|\pi^{\infty}\triangle_{\D}^{-1}(R_{0,1}^{+})^{-1}DG(c-U_0)\|_1 \leq \|\pi^{\infty}\triangle_{\D}^{-1}\|_{B(V^{0,1}_{-1}, V^{0,1}_{0})}\|(R_{0,1}^{+})^{-1}\|_{B(V^{0,2}_0, V^{0,1}_{-1})}\|DG(c-U_0)\|_1   \leq \frac{1}{2N}r.
\]
Now,  notice that we have 
\begin{align*}
    \|A^N\triangle_{\D}^{-1}(R_{0,1}^{+})^{-1}\pi^{\infty} DG(c-U_0)\|_1 &\leq \|A^N\|_1 \|\triangle_{\D}^{-1}\|_{B(V^{0,1}_{-2},V^{0,1}_0)} \|(R_{0,1}^{+})^{-1}\pi^{\infty}\|_{B(V^{0,2}_0,V^{0,1}_{-2})} \|DG(c-U_0)\|_1\\
    &\leq  2\|A^N\|_1 \|\triangle_{\D}^{-1}\|_{B(V^{0,1}_{-2},V^{0,1}_0)}  \|(R_{0,1}^{+})^{-1}\pi^{\infty}\|_{B(V^{0,2}_0,V^{0,1}_{-2})} r.
\end{align*}
Then using \eqref{eq : appendix lemma}, we get
\begin{align*}
    \|(R_{0,1}^{+})^{-1}\pi^{\infty}\|_{B(V^{0,2}_0,V^{0,1}_{-2})} = \max_{j > N} \left\{ \sum_{i=0}^j \frac{1}{2(j+1)(j+2)}\right\} = \frac{1}{2(N+3)}.
\end{align*}
Also we have the bound 
{
\footnotesize
\begin{align}
 \|\triangle_{\D}^{-1}\|_{B(V^{0,1}_{-2},V^{0,1}_0)}   \leq \max \left\{ \frac{1}{3}, \sup_{n\geq 1} \frac{1}{(2n+2)^{-2}
    } \left(
     \frac{ 1}{4(2n+2)(2n+3)}
	+
	\frac{ 1}{2(2n+3)(2n+1)}
	+
	\frac{1}{4(2n+1)(2n+2)} \right)  
 \right\} \leq \frac{16}{15}.
\end{align} }
Consequently, using a similar reasoning as what was achieved above, we get
\begin{align*}
\|A^N\triangle_{\D}^{-1}(R_{0,1}^{+})^{-1}DG(c-U_0)\|_1  &\leq \|A^N \triangle_{\D}^{-1}(R_{0,1}^{+})^{-1}\pi^N DG(c-U_0)\|_1+ \|A^N\triangle_{\D}^{-1}(R_{0,1}^{+})^{-1}\pi^{\infty} DG(c-U_0)\|_1\\
&\leq 2\|A^N\triangle_0^{-1}(R_{0,1}^{+})^{-1}\pi^N\|_1r + {\frac{16}{15}}\frac{\|A^N\|_1}{N+3}r. \qquad \qedhere
\end{align*}

\end{proof}

\subsection{Existence proofs for different values of \boldmath$m$\unboldmath}\label{sec : existence proofs for different values of m}

In this section we present some computer-assisted proofs of existence of non-trivial solutions to \eqref{eq : zero finding original}  with $m = -1,0,1,2,20$. For each value of $m$, we denote by $U_{0,m}$, the associated approximate solution used in the proof and provide its function representation (real part) in the figures below. The approximate solution is obtained numerically using the procedure described in \cite{julia_zernike}.  Moreover, we provide the numerical quantities related to the proof in Table \ref{table : values of rm}.

In the proofs, we use extended precision to compute and store the MMT and iMMT matricies. Specifically, we use 128 bits to represent the matrices thanks to the ``BigFloat" type in Julia. This extension of the precision allows us to compute products with high accuracy. The rest of the computations are run in standard double floating point arithmetic.

\begin{theorem} \label{thm:disk_solutions}
    Let $m \in \{-1, 0, 1 ,2 ,20\}$ and let $r_m >0$ be its associated value as given in Table \ref{table : values of rm}. Then, there exists $\widetilde{U}_m \in V^{0,|m|}$, such that $\widetilde{U}_m$ solves \eqref{def : definition of Fm} and $\widetilde{U}_m \in \overline{B_{r_m}(U_{0,m})}$.
\end{theorem}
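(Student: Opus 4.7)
The plan is to apply the Newton-Kantorovich Theorem~\ref{thm::RadiiNonLin} to the map $F_m$ defined in \eqref{def : definition of Fm}. For each $m \in \{-1,0,1,2,20\}$, the ingredients are already in place: an approximate zero $U_{0,m} \in V^{0,|m|}$ obtained numerically (as described in \cite{julia_zernike}), an approximate inverse $A = A^N + \pi^\infty$ of $DF_m(U_{0,m})$ constructed as in Section~\ref{sec : computer assisted analysis}, and the explicit bound formulas given in Lemmas~\ref{lem : Y0 bound}, \ref{lem : Z1 bound}, and \ref{lem : Z2 bound}. The bootstrapping Lemma~\ref{lem : bootstrapping} guarantees that any zero of $F_m$ in $V^{0,|m|}$ corresponds to a classical smooth solution of \eqref{eq : zero finding original} or \eqref{eq : zero finding original 1/z}, so it suffices to isolate such a zero within a ball.

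The first step is the rigorous computation of $Y_0$. This requires evaluating $U_{0,m} \ast U_{0,m}$ in the Zernike basis, which is performed via the MMT/iMMT approach of Section~\ref{sec:MMTforDiskPolynomials1}: the MMT matrices $M^{(0,|m|)}_{(0,2|m|)}$ and $[M^{(0,2|m|)}_{(0,2|m|)}]^{-1}$ are precomputed using the linear-system method of Section~\ref{sec:computing_p_n(x_j)} in extended (\texttt{BigFloat}, 128-bit) precision to suppress the wrapping effect observed in Section~\ref{sec:ExampleValidated Numerics}, then stored with tight interval enclosures. The subsequent applications of $(R^-)^m$ (or $(R^+_{0,1})^{-1}$ for $m=-1$, using \eqref{def : 1/r}) and $\triangle_\D^{-1}$ (Proposition~\ref{prop : inverse of dirichlet laplacian}) are exact banded operations on the finite output, and the finite-dimensional action of $A^N$ is computed in interval arithmetic. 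The residual tail $(\pi^{2N+m+1}-\pi^N)\triangle_\D^{-1}(R^-)^m(U_{0,m}\ast U_{0,m})$ is handled directly as it involves only finitely many nonzero coefficients.

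The second step is the computation of $Z_1$. The key quantity $Z_{0,m}$ is an operator norm on the finite-dimensional range of $\pi^N$ and reduces to evaluating $\|\cdot\|_1$ of an explicit matrix: one assembles $A^N(I+\triangle_\D^{-1}(R^-)^m DG(U_{0,m})\pi^N)$ (respectively with $(R^+_{0,1})^{-1}$ for $m=-1$) in interval arithmetic, where $DG(U_{0,m})h = 2 U_{0,m}\ast h$ is evaluated column-by-column via the MMT. The second piece, involving the band $(\pi^{2N+m+1}-\pi^N)$, is again a finite computation. The tail bound $\frac{2\|U_{0,m}\|_1}{(2(N+1)+m)^2}$ (or its analogue for $m=-1$, which additionally needs $\|(R^+_{0,0})^{-1}U_{0,m}\|_1$ through \eqref{eq : switch of 1/r}) is computed directly. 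The third step, $Z_2$, is analogous: $\|A^N\triangle_\D^{-1}(R^-)^m\pi^{N+1}\|_1$ is an operator norm on a finite-rank map, while the tail terms $\tfrac{2}{(2(N+1)+m)^2}$ or $\tfrac{16}{15}\tfrac{\|A^N\|_1}{N+3}+\tfrac{1}{2N}$ are explicit.

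Once $Y_0$, $Z_1$, $Z_2$ are validated as intervals, the radii polynomial $p(r) = Z_2 r^2 - (1-Z_1) r + Y_0$ is a scalar quadratic whose discriminant and roots can be rigorously bounded. One selects $r_m$ strictly between the two positive roots (which exist provided $1-Z_1 > 0$ and the discriminant is positive), as reported in Table~\ref{table : values of rm}, and verifies $p(r_m) < 0$ in interval arithmetic. Theorem~\ref{thm::RadiiNonLin} then yields a unique $\widetilde{U}_m \in \overline{B_{r_m}(U_{0,m})}$ with $F_m(\widetilde{U}_m)=0$, and Lemma~\ref{lem : bootstrapping} promotes this to a smooth solution of the original PDE. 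The main obstacle is controlling the interval enclosures tightly enough that $Z_1 < 1$ and $Y_0 Z_2$ is small; this is where the extended-precision storage of the MMT matrices and the careful separation of finite-dimensional operator norms from analytic tail estimates become essential, particularly for the highly oscillatory case $m=20$ and the singular case $m=-1$ where the $1/r$ factor amplifies numerical error.
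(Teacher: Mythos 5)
Your proposal reproduces the paper's (implicit) proof essentially verbatim: apply Theorem~\ref{thm::RadiiNonLin} to $F_m$ using the $Y_0$, $Z_1$, $Z_2$ estimates of Lemmas~\ref{lem : Y0 bound}--\ref{lem : Z2 bound}, with the finite blocks evaluated by interval arithmetic (MMT/iMMT matrices stored at 128-bit precision) and the tail pieces bounded analytically, then check $p(r_m)<0$ and invoke Lemma~\ref{lem : bootstrapping} for regularity. One small factual slip: you assert the MMT matrices are built via the linear-system approach of Section~\ref{sec:computing_p_n(x_j)}, but the paper's own comparison in Section~\ref{sec:ExampleValidated Numerics} concludes that the Forsythe recurrence is the preferred method precisely when arbitrary-precision \texttt{BigFloat} arithmetic is available --- which is the setting used here; either method would yield a valid proof, but your stated choice runs contrary to the paper's recommendation.
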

\begin{table}[H]
\centering
\caption{Verified quantities of Theorem \ref{thm::RadiiNonLin}}
\label{table : values of rm}
\begin{tabular}{|c|c|c|c|c|c|c|}
 \hline
 Value of $m$& $Y_0$ & $Z_1$ & $Z_2$ & $N$ & $\|U_{0,m}\|_1$ & Value of $r_m$\\
 \hline
 \hline
 $-1$  & $1.57 \times 10^{-14}$   & $1.49 \times 10^{-1}$   &  $0.46$   & 36 & $17$ &  $1.85\times 10^{-14}$\\
 $0$  & $7.62\times 10^{-16}$   & $4.28 \times 10^{-3}$   &  $0.71$   & 36 & $8.6$ &   $7.65\times 10^{-16}$  \\
  $1$ & $2.89\times 10^{-14}$ & $2.36 \times 10^{-2}$   &  $0.14$  & 36 & $59$ & $2.96\times 10^{-14}$\\
  $2$ & $5.34 \times 10^{-14}$ &  $5.61 \times 10^{-2}$   & $0.058$   & 36 & $151$ & $5.66\times 10^{-14}$\\
  $20$ & $1.52\times 10^{-7}$ & $8.86 \times 10^{-1}$   &  $0.0013$  & 75 & $9900$ & $1.33\times 10^{-6}$ 
  \\
 \hline
\end{tabular}
\end{table}

Notice that for positive values of  $m$, the maximal amplitude increases as $m$ increases. In particular, for $m=20$, we need to increase the number of coefficients in order to approximate accurately the solution and obtain a proof of existence. In particular, this has a detrimental effect on the quality of the radius $r_{20}$ compared to the other solutions. Note, however, that this could be resolved by performing the whole computer-assisted proof in extended precision (whereas, for now, only products are implemented in extended precision).

\section*{Conclusion and Future Work}

In this manuscript, we introduced a computer-assisted framework for the rigorous evaluate of polynomial nonlinearities at series expansions of orthogonal polynomials. Specifically, this methodology is applied to Zernike polynomials, enabling the analysis of nonlinear PDEs on the disk. Several extensions of this work are possible. For instance, one could consider tensor products of Zernike polynomials with Fourier series, which would naturally facilitate the study of PDEs on cylindrical domains with periodic boundary conditions in the vertical direction. Such spatial domains have been explored, for example, in the context of the axisymmetric Navier–Stokes equations \cite{hou2021nearly, hou2023potential}. On the technical side, however, this tensor product introduces more intricate operator structures on sequences, which would require detailed analysis to enable computer-assisted proofs. This extension is currently under investigation.

Another promising direction for future work is the treatment of nonpolynomial nonlinearities. For instance, extending our methodology to PDEs with exponentials in the nonlinearity --- such as the Karma system, which is  used to model spiral wave breakup
in cardiac tissue \cite{karma1993spiral,karma1994electrical,dodson2019determining} --- would require adaptations. Specifically, given a finite polynomial  $u(x) = \sum_{n=0}^N a_n p_n(x)$, where $\{p_n\}$ is a family of orthogonal polynomials, the nonlinear transformation $\mathcal{G}(u)$ yields an infinite sequence of coefficients, as opposed to the finite representation possible for polynomial nonlinearities. To address this, one would need to develop a Poisson-like formula, akin to those used in conjunction with the Fourier Discrete Transform, and employ precise techniques from complex analysis to rigorously estimate the coefficients of $\mathcal{G}(u)$.







\begin{figure}[htbp]\centering
	\begin{minipage}[b]{0.48\linewidth}
		\centering
		\includegraphics[width=\textwidth]{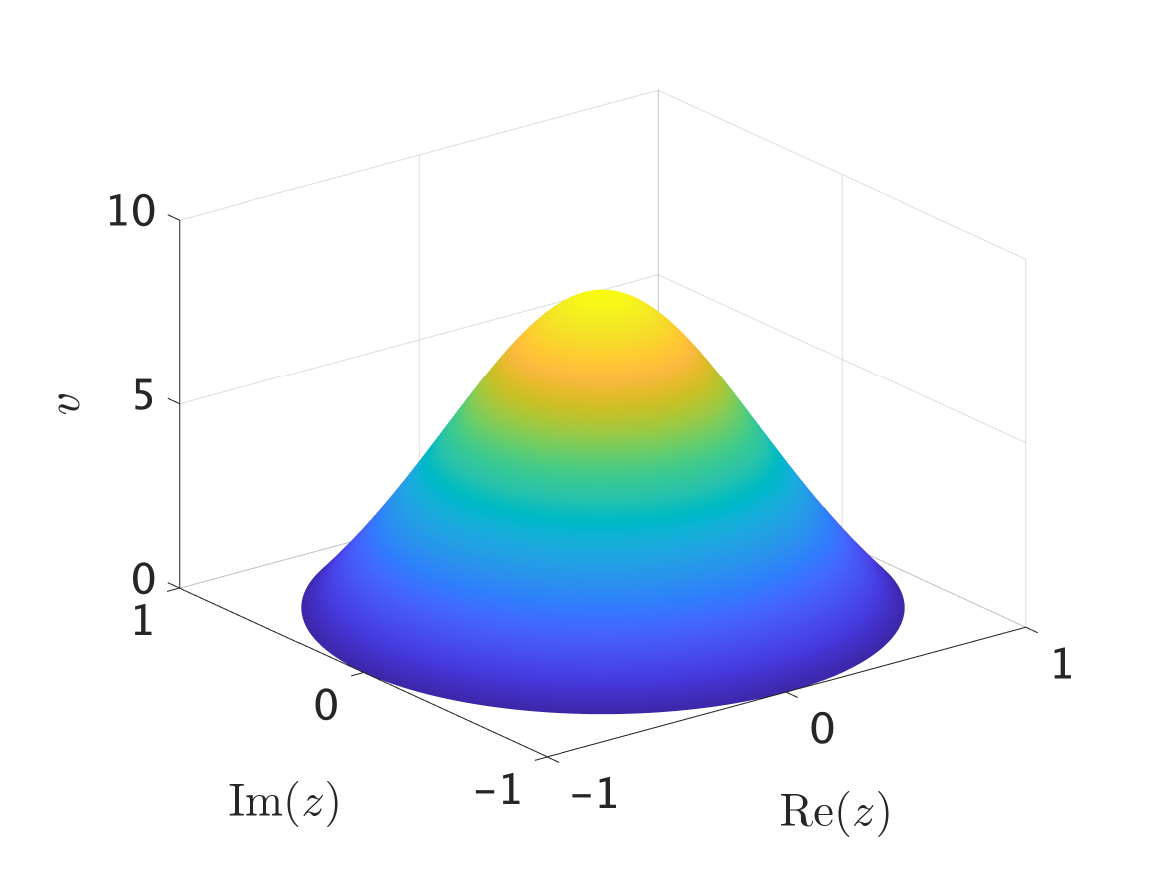}
	\end{minipage}
	\begin{minipage}[b]{0.48\linewidth}
		\centering
		\includegraphics[width=\textwidth]{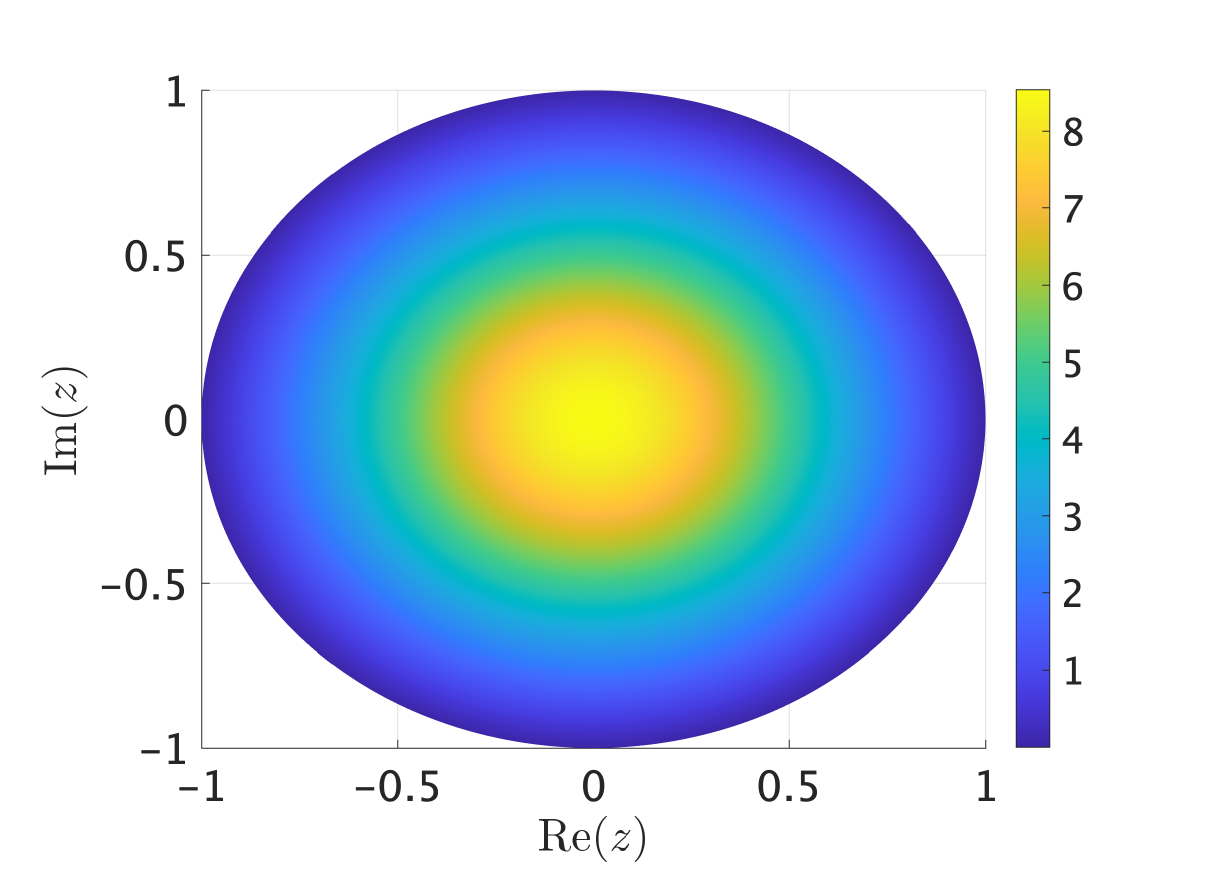}
	\end{minipage}
	\caption{Real part of the approximate solution with $m=0$.}\label{fig:4}
\end{figure}

\begin{figure}[htbp]\centering
	\begin{minipage}[b]{0.48\linewidth}
	\centering
        \includegraphics[width=\textwidth]{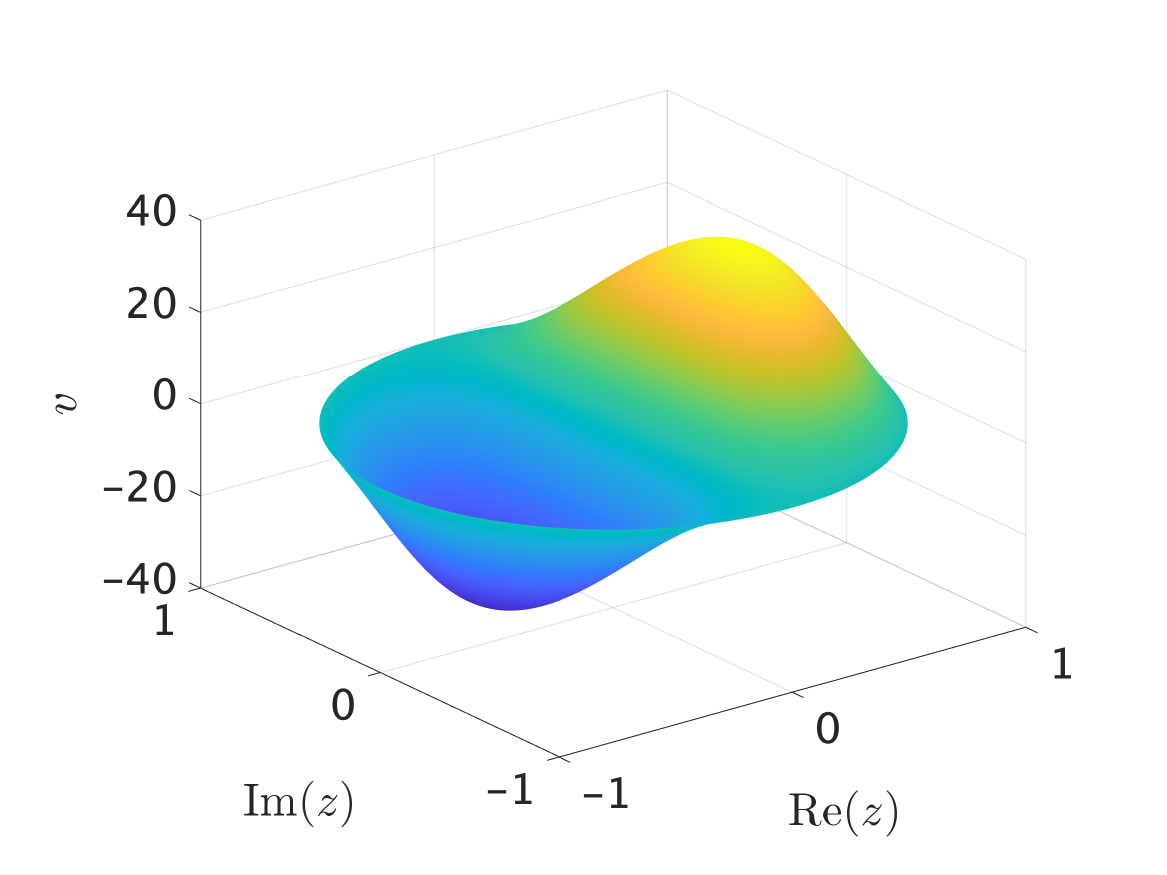}
	\end{minipage}
	\begin{minipage}[b]{0.48\linewidth}
	\centering
	\includegraphics[width=\textwidth]{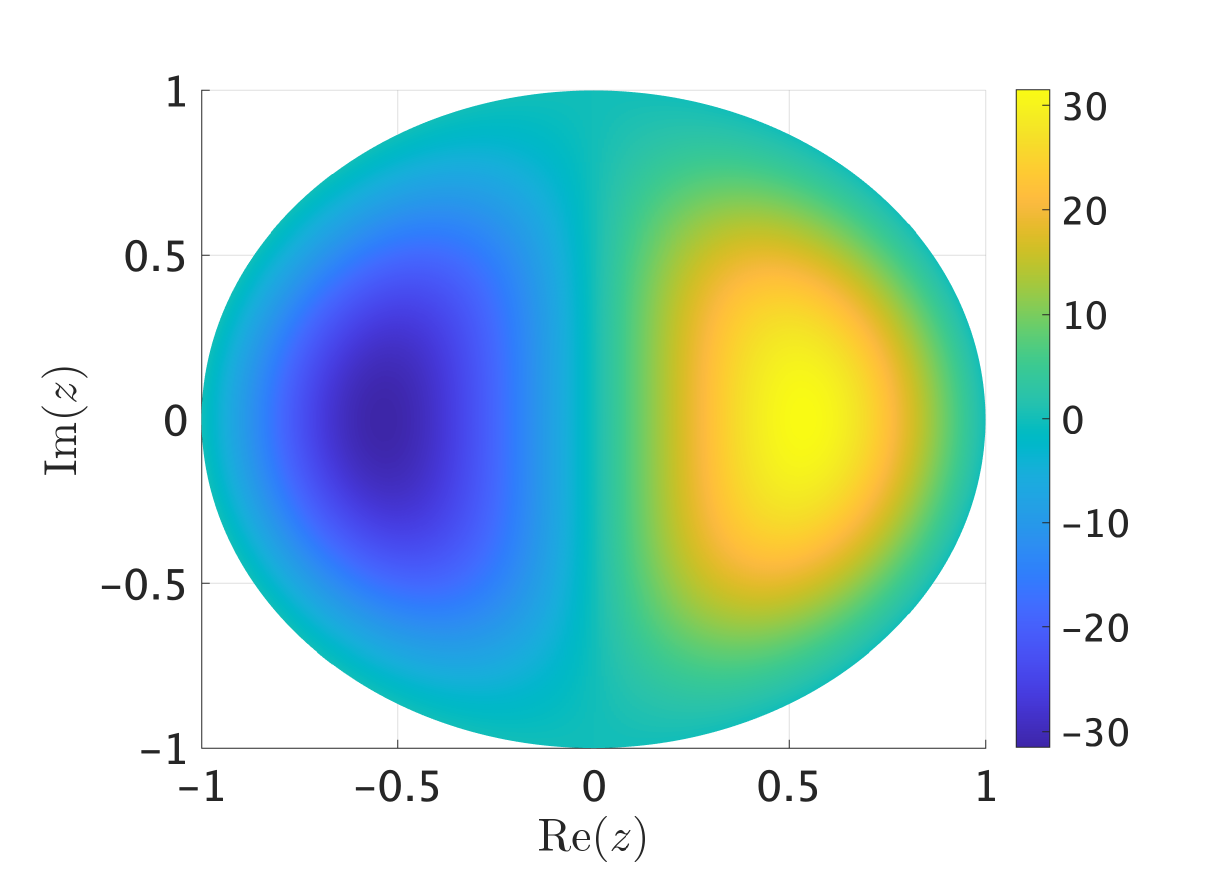}
	\end{minipage}
	\caption{Real part of the approximate solution with $m=1$.}\label{fig:5}
\end{figure}

\begin{figure}[htbp]\centering
	\begin{minipage}[b]{0.48\linewidth}
	\centering
        \includegraphics[width=\textwidth]{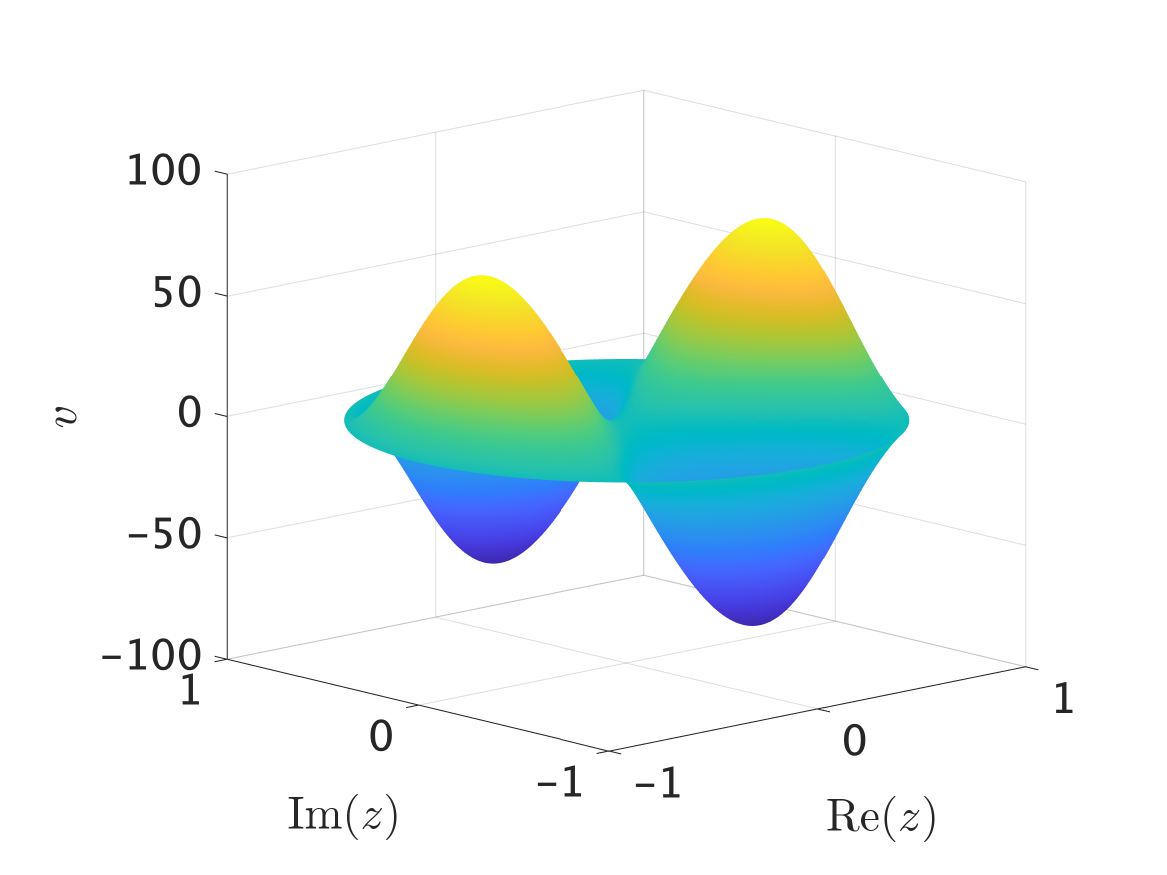}
	\end{minipage}
	\begin{minipage}[b]{0.48\linewidth}
	\centering
	\includegraphics[width=\textwidth]{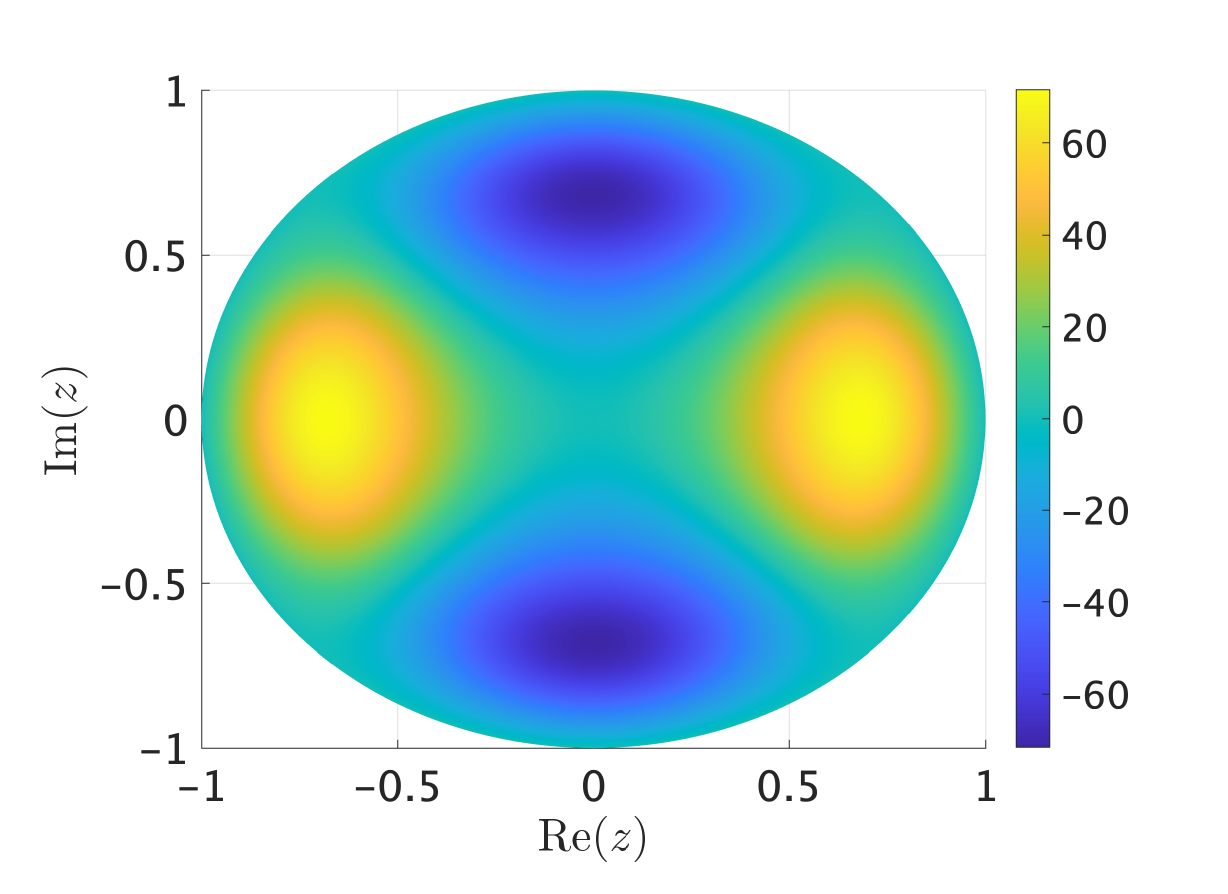}
	\end{minipage}
	\caption{Real part of the approximate solution with $m=2$.}\label{fig:6}
\end{figure}

\begin{figure}[htbp]\centering
	\begin{minipage}[b]{0.48\linewidth}
	\centering
        \includegraphics[width=\textwidth]{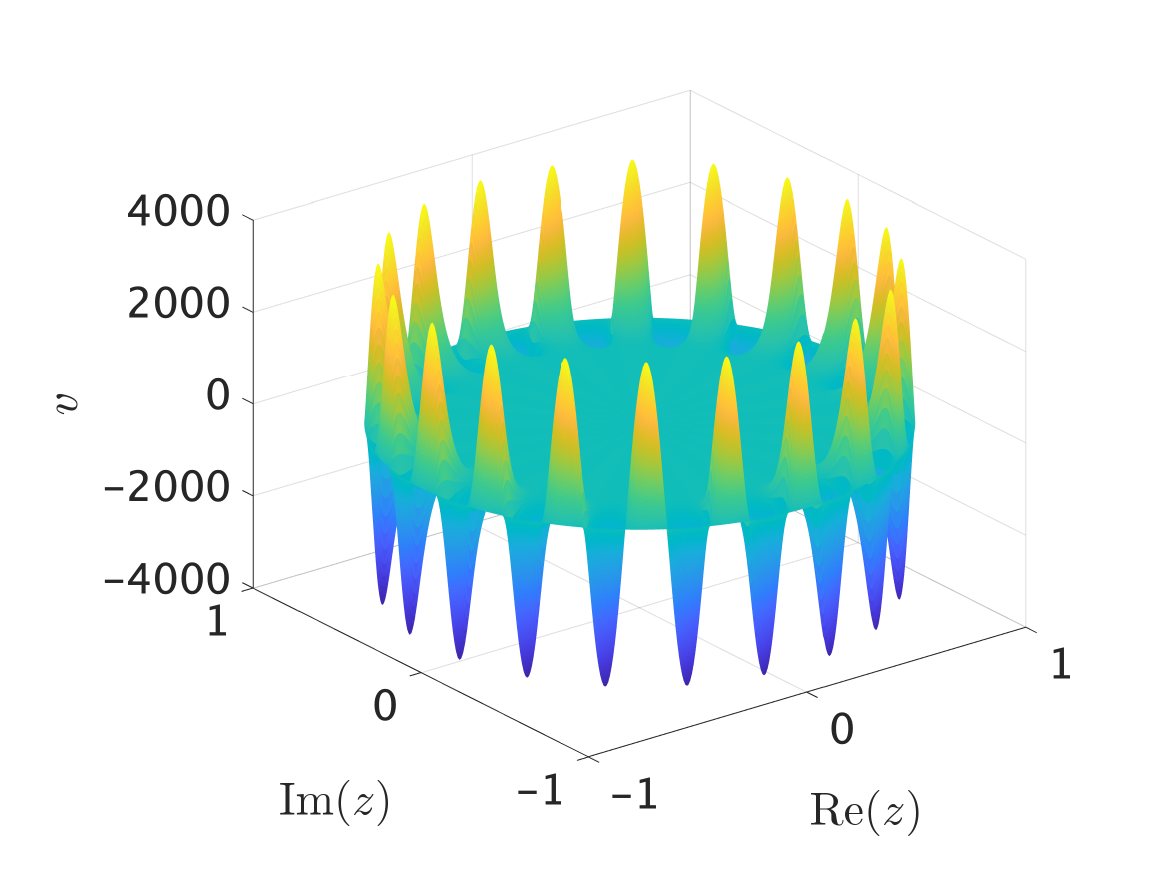}
	\end{minipage}
	\begin{minipage}[b]{0.48\linewidth}
	\centering
	\includegraphics[width=\textwidth]{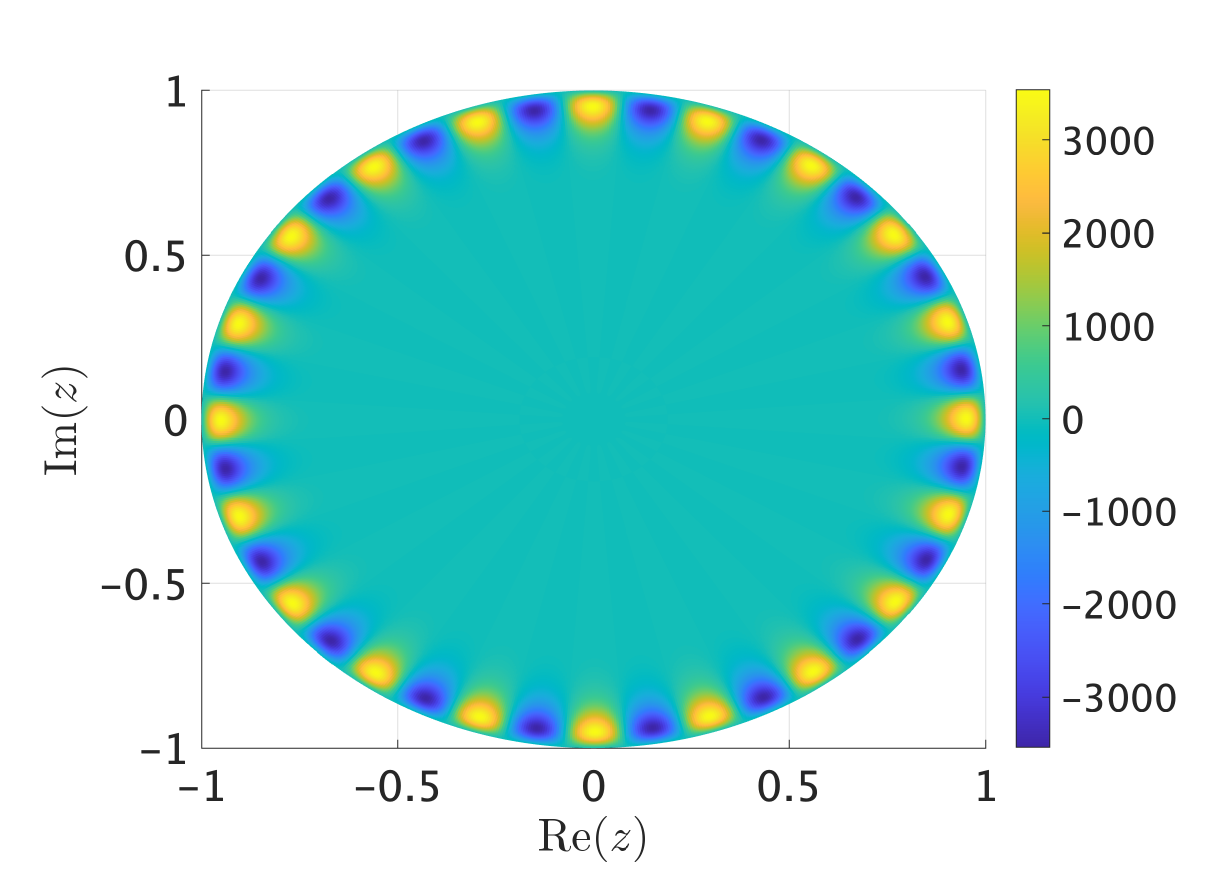}
	\end{minipage}
	\caption{Real part of the approximate solution with $m=20$.}\label{fig:7}
\end{figure}

\appendix

\section{Some operator Bounds}\label{sec : appendix some operator bounds}

Recalling the definition of the operator $R^-$ in \eqref{def : definition of R plus and R minus on jacobi}, we can define the multiplication operator by $z^m$ on $V^{0,2m} \to V^{0,m}$, that we denote $(R^{-})^m$, as follows
\begin{align}\label{def : multiplication by r^m}
(R^{-})^m \colon V^{0,2m} \overset{R^-}\to V^{0,2m-1}  \overset{R^-}\to V^{0,2m-2}  \dots   \overset{R^-}\to  V^{0,m+1} \overset{R^-}\to V^{0,m}
\end{align}
Using that $R^{-}$ is lower triangular, we readily have
\begin{align*}
    \|R^{-}\|_1 = \max_{m \in \mathbb{Z}}\max_{n \in \mathbb{N}_0}\left\{ \frac{n+1}{2n+m+1} + \frac{n+m}{2n+m+1} \right\} = 1.
\end{align*}
Consequently, this implies that 
\begin{align}\label{eq : op norm rm}
    \|(R^{-})^m\|_1 \leq 1.
\end{align}

Now, recall that we define $R^+_{0,1} : V^{0,1} \to V^{0,2}$ (resp. $R^+_{0,0} : V^{0,0} \to V^{0,1}$)  as the restriction of $R^+$ (cf. \eqref{def : definition of R plus and R minus on jacobi}) to $V^{0,1} \to V^{0,2}$ (resp. $V^{0,0} \to V^{0,1}$). Consequently, both  $R^+_{0,0}$ and  $R^{+}_{0,1}$ can be represented as upper-triangular linear operators on sequences indexed on $\mathbb{N}_0$. The following lemma provides an explicit inverse for $R^+_{0,0}$ and  $R^{+}_{0,1}$ which will be used for computing the estimates in Section \ref{sec : computer assisted analysis}.

\begin{lemma}\label{lem : inverse of R01}
    The operator $R^+_{0,1} : V^{0,1}_{s} \to V^{0,2}_{s-1}$ is invertible and its inverse, denoted $(R^+_{0,1})^{-1} : V^{0,2}_s \to V^{0,1}_{s-1}$, is given by
    \begin{align}\label{def : 1/r}
    \left((R^+_{0,1})^{-1}\right)_{i,j} = \begin{cases}
        (-1)^{j+i}\frac{2(i+1)^2}{(j+1)(j+2)}  &\text{ if } j \geq i\\
          0  & \text{ otherwise}.        
    \end{cases}
\end{align}
Moreover, 
\begin{align}\label{eq : norm identity inv R}
    \|(R^{+}_{0,1})^{-1}\|_{B(V^{0,2}_0, V^{0,1}_{-1})}  \leq  \frac{1}{2},
\end{align}
where $\|\cdot\|_{B(V^{0,2}_0, V^{0,1}_{-1})}$ denotes the operator norm for bounded linear  operators on $V^{0,2}_{0} \to V^{0,1}_{-1}$.
Similarly, $R^+_{0,0} : V^{0,0}_s \to V^{0,1}_{s-1}$ is invertible and its inverse, denoted $(R^+_{0,0})^{-1} : V^{0,1}_s \to V^{0,0}_{s-1}$, is given by
    \begin{align}\label{def : 1/r 0}
    \left((R^+_{0,0})^{-1}\right)_{i,j} = \begin{cases}
        (-1)^{j+i}\frac{2i+1}{j+1}  &\text{ if } j \geq i\\
          0  & \text{ otherwise}.        
    \end{cases}
\end{align}
\end{lemma}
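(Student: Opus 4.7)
The approach will be direct verification: compute the matrix entries of $R^+_{0,1}$ and $R^+_{0,0}$ from the explicit formula \eqref{def : definition of R plus and R minus on jacobi}, check by matrix multiplication that the claimed inverses are correct, and then read off the operator norm using the weighted $\ell^1$ column-sum formula.

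First I would write out the two operators in coordinates. From \eqref{def : definition of R plus and R minus on jacobi} with $k=0$ one has
\begin{align*}
R^+_{0,1}\cQ_n^{0,1} &= \tfrac{n+2}{2n+2}\cQ_n^{0,2} + \tfrac{n}{2n+2}\cQ_{n-1}^{0,2},\\
R^+_{0,0}\cQ_n^{0,0} &= \tfrac{n+1}{2n+1}\cQ_n^{0,1} + \tfrac{n}{2n+1}\cQ_{n-1}^{0,1}.
\end{align*}
Hence, viewing these as infinite matrices acting on singly-indexed sequences, they are upper bidiagonal: $(R^+_{0,1})_{i,i}=\tfrac{i+2}{2(i+1)}$, $(R^+_{0,1})_{k,k+1}=\tfrac{k+1}{2(k+2)}$, and analogously $(R^+_{0,0})_{i,i}=\tfrac{i+1}{2i+1}$, $(R^+_{0,0})_{k,k+1}=\tfrac{k+1}{2k+3}$. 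Because the diagonal entries are strictly positive, the operators are formally invertible; the inverses should be upper triangular, which is consistent with the claimed formulas \eqref{def : 1/r} and \eqref{def : 1/r 0}.

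Next I would verify the formulas by computing $(R^+_{0,1})(R^+_{0,1})^{-1}$ termwise. Using bidiagonality,
\begin{equation*}
\bigl((R^+_{0,1})(R^+_{0,1})^{-1}\bigr)_{k,j}=\tfrac{k+2}{2(k+1)}\bigl((R^+_{0,1})^{-1}\bigr)_{k,j}+\tfrac{k+1}{2(k+2)}\bigl((R^+_{0,1})^{-1}\bigr)_{k+1,j},
\end{equation*}
and substituting the candidate formula gives $1$ when $j=k$ and, via an immediate sign cancellation in $(-1)^{i+j}$, zero when $j>k$; the case $j<k$ is trivial since both entries vanish. The identity $(R^+_{0,1})^{-1}(R^+_{0,1})=I$ is checked by an entirely analogous two-term computation. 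The same scheme, with the slightly different coefficients, confirms \eqref{def : 1/r 0}. Invertibility as maps between the claimed weighted spaces is then standard once the explicit inverse is known: one checks that $(R^+_{0,1})^{-1}$ sends $V^{0,2}_s$ boundedly into $V^{0,1}_{s-1}$ by bounding its weighted column sums (done in the next step for $s=0$, and entirely analogous for other $s$), while $R^+_{0,1}$ is bounded in the reverse direction because its entries are $\le 1$.

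The core computation is the operator norm bound \eqref{eq : norm identity inv R}. For a weighted $\ell^1$-type space, the bound $B(V^{0,2}_0,V^{0,1}_{-1})$ is given by the supremum over $j$ of the $j$-th column sum weighted by the output weights and divided by the input weight. Here the input weight at index $j$ is $(1+2j+2)^0=1$ and the output weight at index $i$ is $(1+2i+1)^{-1}=1/(2(i+1))$, so
\begin{equation*}
\|(R^+_{0,1})^{-1}\|_{B(V^{0,2}_0,V^{0,1}_{-1})}=\sup_{j\ge 0}\sum_{i=0}^{j}\frac{2(i+1)^2}{(j+1)(j+2)}\cdot\frac{1}{2(i+1)}=\sup_{j\ge 0}\frac{1}{(j+1)(j+2)}\sum_{i=0}^{j}(i+1),
\end{equation*}
and the arithmetic sum identity $\sum_{i=0}^{j}(i+1)=\tfrac{(j+1)(j+2)}{2}$ collapses this to $\tfrac{1}{2}$ for every $j$, giving equality in \eqref{eq : norm identity inv R}.

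The only genuinely delicate step is keeping the weighted spaces straight, in particular lining up the exponents $s$, $s-1$ with the index shift $m\mapsto m+1$ so that the norm bound applies and $(R^+_{0,1})^{-1}$ is recognized as a smoothing operator on this scale of spaces; everything else is direct verification. I do not anticipate any real obstacle beyond that careful bookkeeping.
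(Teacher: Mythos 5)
Your proposal is correct and follows essentially the same route as the paper: both arguments exploit the upper-bidiagonal coordinate form of $R^+_{0,m}$ and obtain the norm bound \eqref{eq : norm identity inv R} from exactly the same weighted column-sum computation, where $\sum_{i=0}^{j}(i+1)=\tfrac{(j+1)(j+2)}{2}$ collapses every column sum to $\tfrac12$. The only difference is that you verify the stated inverse by checking $(R^+_{0,1})(R^+_{0,1})^{-1}=I$ and $(R^+_{0,1})^{-1}(R^+_{0,1})=I$ termwise, while the paper derives it by solving the recurrence $\alpha_i x_i+\beta_{i+1}x_{i+1}=f_i$ via back-substitution and telescoping the product $\beta_{i+1}\cdots\beta_j/(\alpha_i\cdots\alpha_j)$; this is a cosmetic distinction, not a different method.
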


\begin{proof}
Let $f = (f_i)_{i \in \mathbb{N}_0} \in V^{0,2}$.
First recall from \eqref{def : definition of R plus and R minus on jacobi} that  
$R^+_{0,m} \cQ_i^{0,m}  =  
	\frac{i+|m|+1}{2i+|m|+1} \cQ_{i}^{0,m+1} + \frac{i}{2i+|m|+1} \cQ_{i-1}^{0,m+1}  $.
Suppose that we have define $\alpha_i = \frac{i+2}{2(i+1)}$ and $\beta_{i} = \frac{i}{2i+2}$, then, in order to invert $R^+_{0,1}$,  we need to find $x = (x_i)_{i \in \mathbb{N}_0}$ satisfying
\begin{align*}
       \alpha_i x_{i} + \beta_{i+1} x_{i+1} = f_i
\end{align*}
for all $i\in \mathbb{N}_0$.
The above is equivalent to
\begin{align}\label{eq : recurrence formula inverse}
    x_i = \frac{f_i}{\alpha_i} - \frac{\beta_{i+1}}{\alpha_{i}}x_{i+1}
    = \frac{f_i}{\alpha_i} - \frac{\beta_{i+1}}{\alpha_{i}}\left(\frac{f_{i+1}}{\alpha_{i+1}} - \frac{\beta_{i+2}}{\alpha_{i+1}}x_{i+2}\right)
    = \sum_{j=i}^\infty (-1)^{i+j}\frac{\beta_{i+1}\dots \beta_{j}}{\alpha_i \dots \alpha_j} f_j 
\end{align}
since  $f_j \to 0$ as $j \to \infty.$ In particular, we have 
\begin{align*}
    \frac{\beta_j}{\alpha_j} = \frac{j}{j+2}
\end{align*}
and therefore,
\begin{align*}
    \frac{\beta_{i+1}\dots \beta_{j}}{\alpha_i \dots \alpha_j} =  \frac{1}{\alpha_i}
    \frac{\prod_{n=i+1}^j n}{\prod_{n=i+1}^j (n+2)}
    =
       \frac{2(i+1)^2}{(j+1)(j+2)} 
\end{align*}
for all $j \geq i.$ This provides the desired formula for the entries of $(R^+_{0,1})^{-1}$. Finally, using Definition \ref{def : Vk,m,s} with $\omega_{m,n} = (1+2n+|m|)^s$, we have
\begin{align}\label{eq : appendix lemma}
    \|(R_{0,1}^+)^{-1}\|_{B(V^{0,2}_s, V^{0,1}_{s-1})}= 
    \max_{j \in \mathbb{N}_0} \left\{ \frac{1}{(2j+3)^s } \sum_{i=0}^j \frac{2(i+1)^2}{(j+1)(j+2)} (2i+2)^{s-1}    \right\} 
\end{align}
As the sum is monotonic, we may bound it above by an integral, from which we may see that $\|(R_{0,1}^+)^{-1}\|_{B(V^{0,2}_s, V^{0,1}_{s-1})} < \infty$ for all $s \geq 0$. 
For calculating the norm with $ s=0$ this reduces to:
\begin{align*}
    \|(R_{0,1}^+)^{-1}\|_{B(V^{0,2}_0, V^{0,1}_{-1})}= 
    \max_{j \in \mathbb{N}_0}\left\{ \sum_{i=0}^j\frac{(i+1)}{(j+1)(j+2)}  \right\} = \frac{1}{2}.
\end{align*}
In order to compute the inverse of $R^+_{0,0}$, we use the above reasoning with $\alpha_i = \frac{i+1}{2i+1}$ and $\beta_{i} = \frac{i}{2i+1}$ and obtain the entries of the inverse using \eqref{eq : recurrence formula inverse}.
\end{proof}

\paragraph{Acknowledgement}
This work was conducted during the thematic semester titled ``Computational Dynamics - Analysis, Topology \& Data'', supported by the Centre de Recherches Math\'ematiques at the University of Montreal and the Simons Foundation. The authors sincerely thank these institutions for their funding and for providing the opportunity to explore this research.
AT is also partially supported by the Top Runners in Strategy of Transborder Advanced Researches (TRiSTAR) program conducted as the Strategic Professional Development Program for Young Researchers by the MEXT, and by JSPS KAKENHI Grant Numbers JP22K03411, JP21H01001, and JP24K00538.

\paragraph{Declaration of generative AI and AI-assisted technologies in the writing process}
During the preparation of this work, the authors used ChatGPT to assist in refining the English wording and enhancing the clarity of the presented content. After using this tool/service, the authors reviewed and edited the content as needed and take full responsibility for the content of the publication.

\bibliography{biblio}
\bibliographystyle{alpha}

\end{document}